\definecolor{mypurple}{rgb}{.4,.0,.5}
\def\y{{\bf y}}
\def\x{{\bf x}}
\def\x{{\mathbf x}}
\def\u{{\bf u}}
\def\x{{\bf x}}
\def\y{{\bf y}}
\def\q{{\bf q}}
\def\m{{\bf m}}
\def\h{{\bf h}}
\def\be{\begin{equation}}
\def\ee{\end{equation}}
\def\ba{\left[\begin{array}}
\def\ea{\end{array}\right]}
\def\u{{\bf u}}
\def\x{{\bf x}}
\def\y{{\bf y}}
\def\q{{\bf q}}
\def\p{{\bf p}}
\def\1{{\bf 1}}
\def\0{{\bf 0}}
\def\calX{{\cal X}}
\def\calY{{\cal Y}}
\def\mR{{\mathbb R}}
\def\mN{{\mathbb N}}
\def\mE{{\mathbb E}}
\def\mS{{\mathbb S}}
\def\lp{\left (}
\def\rp{\right )}
\newtheorem{theorem}{Theorem}
\newtheorem{proposition}{Proposition}
\newtheorem{corollary}{Corollary}
\begin{document}

\begin{singlespace}

\title {Bilinearly indexed random processes -- \emph{stationarization} of fully lifted interpolation 
}
\author{
\textsc{Mihailo Stojnic
\footnote{e-mail: {\tt flatoyer@gmail.com}} }}
\date{}
\maketitle

\centerline{{\bf Abstract}} \vspace*{0.1in}

Our companion paper \cite{Stojnicnflgscompyx23}  introduced a very powerful \emph{fully lifted} (fl) statistical interpolating/comparison mechanism for bilinearly indexed random processes. Here, we present a particular realization of such fl mechanism that relies on a stationarization along the interpolating path concept. A collection of very fundamental relations among the interpolating parameters is uncovered, contextualized, and presented. As a nice bonus, in particular special cases, we show that the introduced machinery allows various simplifications to forms readily usable in practice. Given how many well known random structures and optimization problems critically rely on the results of the type considered here, the range of applications is pretty much unlimited. We briefly point to some of these opportunities as well.

\vspace*{0.25in} \noindent {\bf Index Terms: Random processes; comparison principles, lifting, stationarization}.

\end{singlespace}

\section{Introduction}
\label{sec:back}

Over the last half of a century, random processes comparison principles, as a main mathematical platform, enabled achieving some very famous results in a variety of scientific disciplines. Two of these principles clearly distinguished themselves by both simplicity and usefulness. They are, of course,
the well known Slepian's max \cite{Slep62} and the Gordon's minmax \cite{Gordon85} principle (more on the development, importance, and relevant prior and contemporary considerations can be found in, e.g.,  \cite{Sudakov71,Fernique74,Fernique75,Kahane86,Stojnicgscomp16,Adler90,Lifshits85,LedTal91,Tal05}). Circumventing them in studying plenty of random structures and optimization problems has been pretty much unimaginable over the last 20 years (see, e.g., \cite{Guerra03,Tal06,Pan10,Pan10a,Pan13,Pan13a,StojnicISIT2010binary,StojnicCSetam09,StojnicUpper10,StojnicICASSP10block,StojnicICASSP10var,StojnicICASSP10knownsupp,StojnicICASSP10knownsupp}). As a result of such studying, many, so-called, \emph{phase-transition} (PT) phenomena, which typically appear in random structures, have been either fully or almost fully accurately characterized.

Despite a significant progress made over the last two decades (see, e.g., \cite{Guerra03,Tal06,Pan10,Pan10a,Pan13,Pan13a}  for concepts  particularly related  to quadratic (or polynomial/tensorial) max type forms), characterizations of many PT phenomena remain out of reach (e.g., \cite{StojnicLiftStrSec13,StojnicMoreSophHopBnds10,StojnicRicBnds13,StojnicAsymmLittBnds11,StojnicGardSphNeg13,StojnicGardSphErr13} and references therein discuss various forms, including both quadratic and bilinear max and minmax ones, where satisfactory PT characterizations are still lacking). As discussed in \cite{Stojnicgscompyx16}, any further progress on this front seems to be directly related to the progress in studying and understanding the underlying random processes' comparisons. The mechanisms  of \cite{Stojnicgscomp16,Stojnicgscompyx16}  introduced the so-called \emph{partially lifting strategy} and made a strong step forward in that direction.
On the other hand, our companion paper \cite{Stojnicnflgscompyx23}  moved things to another level and introduced  a generic statistical interpolating/comparison mechanism called \emph{fully lifted} (fl). Here, we continue studying the fl  mechanism of \cite{Stojnicnflgscompyx23} and present its a particular realization that relies on a \emph{stationarization} along the interpolating path concept. We uncover, present, and discuss a set of very fundamental relations among the interpolating parameters that come out as direct consequences of the stationarization concept.

To make everything easier to follow, we, in Sections \ref{sec:gencon} discuss how the stationarization mechanism works on the so-called first level of full lifting. We then, in Section \ref{sec:rthlev},  proceed with the corresponding generalization that works for any $r$-th ($r\in\mN$), level of lifting.

\section{$\p,\q$-derivatives at the first level of full lifting}
\label{sec:gencon}

Let $\calX=\{\x^{(1)},\x^{(2)},\dots,\x^{(l)}\}$ with $\x^{(i)}\in \mR^n$ and $\calY=\{\y^{(1)},\y^{(2)},\dots,\y^{(l)}\}$ with $\y^{(i)}\in \mR^m$, vector $\p=[\p_0,\p_1,\p_2]$ with $\p_0\geq \p_1\geq \p_2= 0$, vector $\q=[\q_0,\q_1,\q_2]$ with $\q_0\geq \q_1\geq \q_2= 0$, and real parameters $\beta>0$ and $s$ be given and let function $f(\cdot)$ be
{\small\begin{equation}\label{eq:genanal1}
 f(G,u^{(4,1)},u^{(4,2)},\calX,\calY,\p,\q,\beta,s)= \frac{1}{\beta|s|\sqrt{n}} \log\lp \sum_{i_1=1}^{l}\lp\sum_{i_2=1}^{l}e^{\beta \lp (\y^{(i_2)})^T
 G\x^{(i_1)}+\|\x^{(i_1)}\|_2\|\y^{(i_2)}\|_2 (a_1u^{(4,1)}+a_2u^{(4,2)})\rp} \rp^{s}\rp.
\end{equation}}

\noindent In this paper, our primary focus is studying of this and similar functions in random mediums. More specifically, we consider $(m\times n)$ dimensional matrices  $G\in \mR^{m\times n}$ with i.i.d. standard normal components and independent (of $G$ and among themselves) standard normal random variables $u^{(4,1)}$ and $u^{(4,2)}$, weighted/scaled by $a_1=\sqrt{\p_0\q_0-\p_1\q_1}$ and $a_2=\sqrt{\p_1\q_1}$. For a scalar $\m=[\m_1]$, the following function turns out to be critically important for studying $f(G,u^{(4,1)},u^{(4,2)},\calX,\calY,\q,\beta,s)$
\begin{eqnarray}\label{eq:genanal2}
\xi(\calX,\calY,\p,\q,\m,\beta,s) &  \triangleq  &   \mE_{G,u^{(4,2)}}\frac{1}{\beta|s|\sqrt{n}\m_1} \nonumber \\
& & \times \log \mE_{u^{(4,1)}}\lp \sum_{i_1=1}^{l}\lp\sum_{i_2=1}^{l}e^{\beta \lp (\y^{(i_2)})^T
 G\x^{(i_1)}+\|\x^{(i_1)}\|_2\|\y^{(i_2)}\|_2 (a_1u^{(4,1)}+a_2u^{(4,2)})\rp} \rp^{s}\rp^{\m_1}.\nonumber \\
\end{eqnarray}
Throughout the paper, we adopt the convention that $\mE$ with a subscript denotes the expectation with respect to the subscript specified randomness. On the other hand, if the subscript of $\mE$ is not specified, the expectation is with respect to all underlying randomnesses. As in \cite{Stojnicnflgscompyx23}, we
find studying properties of $\xi(\calX,\calY,\p,\q,\m,\beta,s)$ by following into the footsteps of \cite{Stojnicgscomp16,Stojnicgscompyx16,Stojnicnflgscompyx23} as fairly convenient. To that end, we consider the following interpolating function $\psi(\cdot)$
\begin{equation}\label{eq:genanal3}
\psi(\calX,\calY,\p,\q,\m,\beta,s,t)  =  \mE_{G,u^{(4,2)},\u^{(2,2)},\h^{(2)}} \frac{1}{\beta|s|\sqrt{n}\m_1} \log \mE_{u^{(4,1)},\u^{(2,1)},\h^{(1)}} \lp \sum_{i_1=1}^{l}\lp\sum_{i_2=1}^{l}e^{\beta D_0^{(i_1,i_2)}} \rp^{s}\rp^{\m_1},
\end{equation}
where
\begin{eqnarray}\label{eq:genanal3a}
 D_0^{(i_1,i_2)} & \triangleq & \sqrt{t}(\y^{(i_2)})^T
 G\x^{(i_1)}+\sqrt{1-t}\|\x^{(i_1)}\|_2 (\y^{(i_2)})^T(b_1\u^{(2,1)}+b_2\u^{(2,2)})\nonumber \\
 & & +\sqrt{t}\|\x^{(i_1)}\|_2\|\y^{(i_2)}\|_2(a_1u^{(4,1)}+a_2u^{(4,2)}) +\sqrt{1-t}\|\y^{(i_2)}\|_2(c_1\h^{(1)}+c_2\h^{(2)})^T\x^{(i_1)}.
 \end{eqnarray}
In (\ref{eq:genanal3}), $\u^{(2,1)}$ and $\u^{(2,2)}$ are $m$ dimensional vectors of i.i.d. standard normals and $\h^{(1)}$ and $\h^{(2)}$ are $n$ dimensional vectors of i.i.d. standard normals. These four vectors are assumed to be independent among themselves and of all other random quantities. Reweighting/scaling factors satisfy $b_1=\sqrt{\p_0-\p_1}$ and $b_2=\sqrt{\p_1}$ and $c_1=\sqrt{\q_0-\q_1}$ and $c_2=\sqrt{\q_1}$. It is not that difficult to see that $\xi(\calX,\calY,\p,\q,\m,\beta,s)=\psi(\calX,\calY,\p,\q,\m,\beta,s,1)$ and since $\psi(\calX,\calY,\p,\q,\m,\beta,s,0)$ is typically easier to handle than $\psi(\calX,\calY,\p,\q,\m,\beta,s,1)$, establishing a connection between  $\psi(\calX,\calY,\p,\q,\m,\beta,s,1)$ and $\psi(\calX,\calY,\p,\q,\m,\beta,s,0)$ is basically equivalent to directly connecting $\xi(\calX,\calY,\p,\q,\m,\beta,s)$ to $\psi(\calX,\calY,\p,\q,\m,\beta,s,0)$. To make the exposition easier to follows, we below set
\begin{eqnarray}\label{eq:genanal4}
\u^{(i_1,1)} & =  & \frac{G\x^{(i_1)}}{\|\x^{(i_1)}\|_2} \nonumber \\
\u^{(i_1,3,1)} & =  & \frac{(\h^{(1)})^T\x^{(i_1)}}{\|\x^{(i_1)}\|_2} \nonumber \\
\u^{(i_1,3,2)} & =  & \frac{(\h^{(2)})^T\x^{(i_1)}}{\|\x^{(i_1)}\|_2}.
\end{eqnarray}
Denoting by  $G_{j,1:n}$  the $j$-th row of $G$ and by $\u_j^{(i_1,1)}$ the $j$-th component of $\u^{(i_1,1)}$, one from (\ref{eq:genanal4}) has
\begin{eqnarray}\label{eq:genanal5}
\u_j^{(i_1,1)} & =  & \frac{G_{j,1:n}\x^{(i_1)}}{\|\x^{(i_1)}\|_2},1\leq j\leq m.
\end{eqnarray}
For any fixed $i_1$, one trivially has that the elements of $\u^{(i_1,1)}$, $\u^{(2,1)}$, $\u^{(2,2)}$, $\u^{(i_1,3,1)}$, and $\u^{(i_1,3,2)}$ are i.i.d. standard normals. Setting ${\mathcal U}_k=\{u^{(4,k)},\u^{(2,k)},\h^{(k)}\},k\in\{1,2\}$, one then easily rewrites (\ref{eq:genanal3}) as
\begin{equation}\label{eq:genanal6}
\psi(\calX,\calY,\p,\q,\m,\beta,s,t)  =  \mE_{G,{\mathcal U}_2} \frac{1}{\beta|s|\sqrt{n}\m_1} \log \mE_{{\mathcal U}_1} \lp \sum_{i_1=1}^{l}\lp\sum_{i_2=1}^{l}A^{(i_1,i_2)} \rp^{s}\rp^{\m_1},
\end{equation}
where $\beta_{i_1}=\beta\|\x^{(i_1)}\|_2$ and
\begin{eqnarray}\label{eq:genanal7}
B^{(i_1,i_2)} & \triangleq &  \sqrt{t}(\y^{(i_2)})^T\u^{(i_1,1)}+\sqrt{1-t} (\y^{(i_2)})^T(b_1\u^{(2,1)}+b_2\u^{(2,2)}) \nonumber \\
D^{(i_1,i_2)} & \triangleq &  (B^{(i_1,i_2)}+\sqrt{t}\|\y^{(i_2)}\|_2 (a_1u^{(4,1)}+a_2u^{(4,2)})+\sqrt{1-t}\|\y^{(i_2)}\|_2(c_1\u^{(i_1,3,1)}+c_2\u^{(i_1,3,2)})) \nonumber \\
A^{(i_1,i_2)} & \triangleq &  e^{\beta_{i_1}D^{(i_1,i_2)}}\nonumber \\
C^{(i_1)} & \triangleq &  \sum_{i_2=1}^{l}A^{(i_1,i_2)}\nonumber \\
Z & \triangleq & \sum_{i_1=1}^{l} \lp \sum_{i_2=1}^{l} A^{(i_1,i_2)}\rp^s =\sum_{i_1=1}^{l}  (C^{(i_1)})^s.
\end{eqnarray}
Since our particular interest is the effect of $\p$ and $\q$ on the relation between $\psi(\calX,\calY,\p,\q,\m,\beta,s,1)$ and $\psi(\calX,\calY,\p,\q,\m,\beta,s,0)$, studying monotonicity of $\psi(\calX,\calY,\p,\q,\m,\beta,s,t)$ with respect to $\p$ and $\q$ seems as a good starting point. To that end, we start by considering its derivative
\begin{eqnarray}\label{eq:genanal9}
\frac{d\psi(\calX,\calY,\q,\m,\beta,s,t)}{d\p_1} & = &  \mE_{G,{\mathcal U}_2} \frac{1}{\beta|s|\sqrt{n}\m_1} \log \mE_{{\mathcal U}_1} Z^{\m_1}\nonumber \\
& = &  \mE_{G,{\mathcal U}_2} \frac{1}{\beta|s|\sqrt{n}\m_1\mE_{{\mathcal U}_1} Z^{\m_1}} \frac{d \mE_{{\mathcal U}_1} Z^{\m_1} }{d\p_1}\nonumber \\
& = &  \mE_{G,{\mathcal U}_2} \frac{\m_1}{\beta|s|\sqrt{n}\m_1\mE_{{\mathcal U}_1} Z^{\m_1}}\mE_{{\mathcal U}_1} \frac{1}{Z^{1-\m_1}}\frac{d Z}{d\p_1}\nonumber \\
& = &  \mE_{G,{\mathcal U}_2} \frac{\m_1}{\beta|s|\sqrt{n}\m_1\mE_{{\mathcal U}_1} Z^{\m_1}}\mE_{{\mathcal U}_1} \frac{1}{Z^{1-\m_1}} \frac{d\lp \sum_{i_1=1}^{l} \lp \sum_{i_2=1}^{l} A^{(i_1,i_2)}\rp^s \rp }{d\p_1}\nonumber \\
& = &   \mE_{G,{\mathcal U}_2} \frac{s\m_1}{\beta|s|\sqrt{n}\m_1\mE_{{\mathcal U}_1} Z^{\m_1}}\mE_{{\mathcal U}_1} \frac{1}{Z^{1-\m_1}}  \sum_{i=1}^{l} (C^{(i_1)})^{s-1} \nonumber \\
& & \times \sum_{i_2=1}^{l}\beta_{i_1}A^{(i_1,i_2)}\frac{dD^{(i_1,i_2)}}{d\p_1},
\end{eqnarray}
where
\begin{eqnarray}\label{eq:genanal9a}
\frac{dD^{(i_1,i_2)}}{d\p_1}= \lp \frac{dB^{(i_1,i_2)}}{d\p_1}-\sqrt{t}\q_1\frac{\|\y^{(i_2)}\|_2 u^{(4,1)}}{2\sqrt{\p_0\q_0-\p_1\q_1}}+\sqrt{t}\q_1\frac{\|\y^{(i_2)}\|_2 u^{(4,2)}}{2\sqrt{\p_1\q_1}} \rp.
\end{eqnarray}
Utilizing (\ref{eq:genanal7}) we also find
\begin{eqnarray}\label{eq:genanal10}
\frac{dB^{(i_1,i_2)}}{d\p_1} & = &   \frac{d\lp\sqrt{t}(\y^{(i_2)})^T\u^{(i_1,1)}+\sqrt{1-t} (\y^{(i_2)})^T(b_1\u^{(2,1)}+b_2\u^{(2,2)})\rp}{d\p_1} \nonumber \\
 & = &
\sum_{j=1}^{m}\lp -\sqrt{1-t}\frac{\y_j^{(i_2)}\u_j^{(2,1)}}{2\sqrt{\p_0-\p_1}}+\sqrt{1-t}\frac{\y_j^{(i_2)}\u_j^{(2,2)}}{2\sqrt{\p_1}}\rp.
\end{eqnarray}
Combining (\ref{eq:genanal9a}) and (\ref{eq:genanal10}) we obtain
\begin{eqnarray}\label{eq:genanal10a}
\frac{dD^{(i_1,i_2)}}{d\p_1} & = & \sum_{j=1}^{m}\lp -\sqrt{1-t}\frac{\y_j^{(i_2)}\u_j^{(2,1)}}{2\sqrt{\p_0-\p_1}}+\sqrt{1-t}\frac{\y_j^{(i_2)}\u_j^{(2,2)}}{2\sqrt{\p_1}}\rp \nonumber \\
& & -\sqrt{t}\q_1\frac{\|\y^{(i_2)}\|_2 u^{(4,1)}}{2\sqrt{\p_0\q_0-\p_1\q_1}}+\sqrt{t}\q_1\frac{\|\y^{(i_2)}\|_2 u^{(4,2)}}{2\sqrt{\p_1\q_1}} .
\end{eqnarray}
The above terms can be rearranged into two clearly distinguishable groups that depend on ${\mathcal U}_2$, and ${\mathcal U}_1$, respectively
\begin{eqnarray}\label{eq:genanal10b}
\frac{dD^{(i_1,i_2)}}{d\p_1} & = & \bar{T}_2+\bar{T}_1,
\end{eqnarray}
where
\begin{eqnarray}\label{eq:genanal10c}
 \bar{T}_2 & = & \sum_{j=1}^{m}\sqrt{1-t}\frac{\y_j^{(i_2)}\u_j^{(2,2)}}{2\sqrt{\p_1}}+\sqrt{t}\q_1\frac{\|\y^{(i_2)}\|_2 u^{(4,2)}}{2\sqrt{\p_1\q_1}} \nonumber\\
\bar{T}_1 & = &  -\sqrt{1-t}\sum_{j=1}^{m} \frac{\y_j^{(i_2)}\u_j^{(2,1)}}{2\sqrt{\p_0-\p_1}}  -\sqrt{t}\q_1\frac{\|\y^{(i_2)}\|_2 u^{(4,1)}}{2\sqrt{\p_0\q_0-\p_1\q_1}}.
\end{eqnarray}
Combining (\ref{eq:genanal9}) and (\ref{eq:genanal10b}) we have
\begin{equation}\label{eq:genanal10d}
\frac{d\psi(\calX,\calY,\q,\m,\beta,s,t)}{d\p_1}  =     \mE_{G,{\mathcal U}_2} \frac{s\m_1}{\beta|s|\sqrt{n}\m_1\mE_{{\mathcal U}_1} Z^{\m_1}}\mE_{{\mathcal U}_1} \frac{1}{Z^{1-\m_1}}  \sum_{i_1=1}^{l} (C^{(i_1)})^{s-1} \sum_{i_2=1}^{l}\beta_{i_1}A^{(i_1,i_2)}
\lp \bar{T}_2+ \bar{T}_1\rp.
\end{equation}
Finally, we recognize the following four objects placed in the two mentioned groups that will play a key role in the ensuing computations.
\begin{equation}\label{eq:genanal10e}
\frac{d\psi(\calX,\calY,\q,\m,\beta,s,t)}{d\p_1}  =       \frac{\mbox{sign}(s)}{2\beta\sqrt{n}} \sum_{i_1=1}^{l}  \sum_{i_2=1}^{l}
\beta_{i_1}\lp T_2^{\p}- T_1^{\p}\rp,
\end{equation}
where
\begin{eqnarray}\label{eq:genanal10f}
T_2^{\p} & = & \frac{\sqrt{1-t}}{\sqrt{\p_1}}\sum_{j=1}^{m} T_{2,1,j}^{\p} +\frac{\sqrt{t}\q_1}{\sqrt{\p_1\q_1}}\|\y^{(i_2)}\|_2T_{2,3}^{(\p,\q)} \nonumber\\
T_1^{\p} & = & \frac{\sqrt{1-t}}{\sqrt{\p_0-\p_1}}\sum_{j=1}^{m} T_{1,1,j}^{\p} +\frac{\sqrt{t}\q_1}{\sqrt{\p_0\q_0-\p_1\q_1}}\|\y^{(i_2)}\|_2T_{1,3}^{(\p,\q)}.
\end{eqnarray}
and
\begin{eqnarray}\label{eq:genanal10g}
 T_{2,1,j}^{\p} & = &  \mE_{G,{\mathcal U}_2}\lp \frac{1}{\mE_{{\mathcal U}_1} Z^{\m_1}}\mE_{{\mathcal U}_1}\frac{(C^{(i_1)})^{s-1} A^{(i_1,i_2)} \y_j^{(i_2)}\u_j^{(2,2)}}{Z^{1-\m_1}} \rp \nonumber \\
 T_{2,3}^{(\p,\q)} & = &  \mE_{G,{\mathcal U}_2}\lp \frac{1}{\mE_{{\mathcal U}_1} Z^{\m_1}}\mE_{{\mathcal U}_1}\frac{(C^{(i_1)})^{s-1} A^{(i_1,i_2)} u^{(4,2)}}{Z^{1-\m_1}} \rp \nonumber \\
T_{1,1,j}^{\p} & = &  \mE_{G,{\mathcal U}_2} \lp\frac{1}{\mE_{{\mathcal U}_1} Z^{\m_1}}\mE_{{\mathcal U}_1}\frac{(C^{(i_1)})^{s-1} A^{(i_1,i_2)} \y_j^{(i_2)}\u_j^{(2,1)}}{Z^{1-\m_1}}\rp \nonumber \\
 T_{1,3}^{(\p,\q)} & = &  \mE_{G,{\mathcal U}_2}\lp \frac{1}{\mE_{{\mathcal U}_1} Z^{\m_1}}\mE_{{\mathcal U}_1}\frac{(C^{(i_1)})^{s-1} A^{(i_1,i_2)} u^{(4,1)}}{Z^{1-\m_1}}\rp.
\end{eqnarray}
Analogously to (\ref{eq:genanal10e})-(\ref{eq:genanal10g}) we have
\begin{equation}\label{eq:genanal10e1}
\frac{d\psi(\calX,\calY,\q,\m,\beta,s,t)}{d\q_1}  =       \frac{\mbox{sign}(s)}{2\beta\sqrt{n}} \sum_{i_1=1}^{l}  \sum_{i_2=1}^{l}
\beta_{i_1}\lp T_2^{\q}- T_1^{\q}\rp,
\end{equation}
where
\begin{eqnarray}\label{eq:genanal10f1}
T_2^{\q} & = & \frac{\sqrt{1-t}}{\sqrt{\q_1}}\|\y^{(i_2)}\|_2 T_{2,2}^{\q} +\frac{\sqrt{t}\p_1}{\sqrt{\p_1\q_1}}\|\y^{(i_2)}\|_2T_{2,3}^{(\p,\q)} \nonumber\\
T_1^{\q} & = & \frac{\sqrt{1-t}}{\sqrt{\q_0-\q_1}} \|\y^{(i_2)}\|_2T_{1,2}^{\q} +\frac{\sqrt{t}\p_1}{\sqrt{\p_0\q_0-\p_1\q_1}}\|\y^{(i_2)}\|_2T_{1,3}^{(\p,\q)}.
\end{eqnarray}
and
\begin{eqnarray}\label{eq:genanal10g1}
 T_{2,2}^{\q} & = &  \mE_{G,{\mathcal U}_2}\lp \frac{1}{\mE_{{\mathcal U}_1} Z^{\m_1}}\mE_{{\mathcal U}_1}\frac{(C^{(i_1)})^{s-1} A^{(i_1,i_2)} \u^{(i_1,3,2)}}{Z^{1-\m_1}} \rp \nonumber \\
 T_{1,2}^{\q} & = &  \mE_{G,{\mathcal U}_2} \lp\frac{1}{\mE_{{\mathcal U}_1} Z^{\m_1}}\mE_{{\mathcal U}_1}\frac{(C^{(i_1)})^{s-1} A^{(i_1,i_2)} \u^{(i_1,3,1)}}{Z^{1-\m_1}}\rp.
 \end{eqnarray}
The above collection of equations (\ref{eq:genanal10d})-(\ref{eq:genanal10g1}) is, in principle, sufficient to determine the $\psi(\calX,\calY,\q,\m,\beta,s,t)$'s derivatives with respect to both $\p_1$ and $\q_1$. There are six key terms given in (\ref{eq:genanal10g}) and  (\ref{eq:genanal10g1}) and we will handle each of them separately. Throughout the process of handling each of them, we will heavily rely on  \cite{Stojnicgscompyx16,Stojnicnflgscompyx23} and, consequently, parallel their presentations as much as possible.

\subsection{Computing $\frac{d\psi(\calX,\calY,\p,\q,\m,\beta,s,t)}{d\p_1}$ and $\frac{d\psi(\calX,\calY,\p,\q,\m,\beta,s,t)}{d\q_1}$}
\label{sec:compderivative}

We carefully choose the order in which we handle the terms appearing in (\ref{eq:genanal10g}). In particular, we split the six terms into two groups of three. We first handle the three terms from the last group ($T_{1,1,j}^{\p}$, $T_{1,2}^{\q}$, and $T_{1,3}^{(\p,\q)}$) and  then the three terms from the first  group ($T_{2,1,j}^{\p}$,$ T_{2,2}^{\q}$, and $T_{2,3}^{(\p,\q)}$). Analogously to \cite{Stojnicnflgscompyx23}, we call the last group $T_1$--group and the first one $T_2$--group.

\subsubsection{Handling $T_1$--group}
\label{sec:handlT1}

We handle separately each of the three terms from $T_1$--group.

\underline{\textbf{\emph{Determining}} $T_{1,1,j}^{\p}$}
\label{sec:hand1T11}

We closely follow the presentation from Section  \ref{sec:hand1T11} of \cite{Stojnicnflgscompyx23} and after noting that the only difference is in the $b_1$ and $b_2$ scaling (and correspondingly rescaled $\u_j^{(2,1)}$ variances) we have through the Gaussian integration by parts
 \begin{eqnarray}\label{eq:liftgenAanal19}
T_{1,1,j}^{\p} & = & \mE_{G,{\mathcal U}_2}\lp \frac{1}{\mE_{{\mathcal U}_1} Z^{\m_1}}\mE_{{\mathcal U}_1}  \frac{(C^{(i_1)})^{s-1} A^{(i_1,i_2)}\y_j^{(i_2)}}{Z^{1-\m_1}}\rp \nonumber \\
& = & \mE_{G,{\mathcal U}_2} \lp \frac{1}{\mE_{{\mathcal U}_1} Z^{\m_1}}\mE_{{\mathcal U}_1}\lp \mE_{{\mathcal U}_1}(\u_j^{(2,1)}\u_j^{(2,1)}) \frac{d}{du_j^{(2,1)}}\lp\frac{(C^{(i_1)})^{s-1} A^{(i_1,i_2)}\y_j^{(i_2)}}{Z^{1-\m_1}}\rp\rp\rp \nonumber \\
& = & \mE_{G,{\mathcal U}_2}\lp \frac{1}{\mE_{{\mathcal U}_1} Z^{\m_1}}\mE_{{\mathcal U}_1}(\u_j^{(2,1)}\u_j^{(2,1)})\mE_{{\mathcal U}_1}\lp  \frac{d}{du_j^{(2,1)}}\lp\frac{(C^{(i_1)})^{s-1} A^{(i_1,i_2)}\y_j^{(i_2)}}{Z^{1-\m_1}}\rp\rp\rp \nonumber \\
& = & \mE_{G,{\mathcal U}_2} \lp\frac{1}{\mE_{{\mathcal U}_1} Z^{\m_1}}\mE_{{\mathcal U}_1}\lp  \frac{d}{du_j^{(2,1)}}\lp\frac{(C^{(i_1)})^{s-1} A^{(i_1,i_2)}\y_j^{(i_2)}}{Z^{1-\m_1}}\rp\rp\rp.
\end{eqnarray}
Since the last inner expectation is structurally identical to the one considered in Section \ref{sec:hand1T11} of \cite{Stojnicnflgscompyx23}, we can rely on the results obtained there. One needs to observe though that here $\u^{(2,1)}$ is  $\u^{(2,1)}$ scaled by $\sqrt{\p_0-\p_1}$. Following \cite{Stojnicnflgscompyx23} we then write
\begin{eqnarray}\label{eq:liftgenAanal19a}
T_{1,1,j}^{\p} & = &    \sqrt{\p_0-\p_1}\mE_{G,{\mathcal U}_2} \lp\frac{1}{\mE_{{\mathcal U}_1} Z^{\m_1}}\lp \Theta_1+\Theta_2 \rp\rp,
\end{eqnarray}
where  (as in \cite{Stojnicnflgscompyx23})
{\small\begin{eqnarray}\label{eq:liftgenAanal19c}
\Theta_1 &  = &  \mE_{{\mathcal U}_1} \Bigg( \Bigg. \frac{\y_j^{(i_2)} \lp (C^{(i_1)})^{s-1}\beta_{i_1}A^{(i_1,i_2)}\y_j^{(i_2)}\sqrt{1-t} +A^{(i_1,i_2)}(s-1)(C^{(i_1)})^{s-2}\beta_{i_1}\sum_{p_2=1}^{l}A^{(i_1,p_2)}\y_j^{(p_2)}\sqrt{1-t}\rp}{Z^{1-\m_1}}\Bigg. \Bigg)\Bigg. \Bigg) \nonumber \\
\Theta_2 & = & -(1-\m_1)\mE_{{\mathcal U}_1} \lp\sum_{p_1=1}^{l}
\frac{(C^{(i_1)})^{s-1} A^{(i_1,i_2)}\y_j^{(i_2)}}{Z^{2-\m_1}}
s  (C^{(p_1)})^{s-1}\sum_{p_2=1}^{l}\beta_{p_1}A^{(p_1,p_2)}\y_j^{(p_2)}\sqrt{1-t}\rp\Bigg.\Bigg).\nonumber \\
\end{eqnarray}}
and conveniently
\begin{eqnarray}\label{eq:liftgenAanal19d}
\sum_{i_1=1}^{l}\sum_{i_2=1}^{l}\sum_{j=1}^{m} \lp\frac{1}{\mE_{{\mathcal U}_1} Z^{\m_1}}\beta_{i_1}\Theta_1\rp
&  = & \sqrt{1-t} \lp  \mE_{{\mathcal U}_1}\frac{Z^{\m_1}}{\mE_{{\mathcal U}_1} Z^{\m_1}} \sum_{i_1=1}^{l}\frac{(C^{(i_1)})^s}{Z}\sum_{i_2=1}^{l}\frac{A^{(i_1,i_2)}}{C^{(i_1)}}\beta_{i_1}^2\|\y^{(i_2)}\|_2^2\rp \nonumber\\
& & +   \sqrt{1-t} \Bigg(\Bigg. \mE_{{\mathcal U}_1}\frac{Z^{\m_1}}{\mE_{{\mathcal U}_1} Z^{\m_1}} \nonumber \\
& & \times \sum_{i_1=1}^{l}\frac{(s-1)(C^{(i_1)})^s}{Z}\sum_{i_2=1}^{l}\sum_{p_2=1}^{l}\frac{A^{(i_1,i_2)}A^{(i_1,p_2)}}{(C^{(i_1)})^2}\beta_{i_1}^2(\y^{(p_2)})^T\y^{(i_2)}\Bigg.\Bigg).\nonumber \\
 \end{eqnarray}
After defining the operator
\begin{eqnarray}\label{eq:genAanal19e}
 \Phi_{{\mathcal U}_1} & \triangleq &  \mE_{{\mathcal U}_{1}} \frac{Z^{\m_1}}{\mE_{{\mathcal U}_{1}}Z^{\m_1}}  \triangleq  \mE_{{\mathcal U}_{1}} \lp \frac{Z^{\m_1}}{\mE_{{\mathcal U}_{1}}Z^{\m_1}}\lp \cdot \rp\rp,
 \end{eqnarray}
and setting measures
\begin{eqnarray}\label{eq:genAanal19e1}
  \gamma_0(i_1,i_2) & = &
\frac{(C^{(i_1)})^{s}}{Z}  \frac{A^{(i_1,i_2)}}{C^{(i_1)}} \nonumber \\
\gamma_{01}^{(1)}  & = &  \Phi_{{\mathcal U}_1} (\gamma_0(i_1,i_2)) \nonumber \\
\gamma_{02}^{(1)}  & = &  \Phi_{{\mathcal U}_1} (\gamma_0(i_1,i_2)\times \gamma_0(i_1,p_2)) \nonumber \\
\gamma_{1}^{(1)}   & = &  \Phi_{{\mathcal U}_1}  \lp \gamma_0(i_1,i_2)\times \gamma_0(p_1,p_2) \rp \nonumber \\.
\gamma_{2}^{(1)}   & = &  \Phi_{{\mathcal U}_1}  \gamma_0(i_1,i_2)  \times   \Phi_{{\mathcal U}_1} \gamma_0(p_1,p_2).
\end{eqnarray}
 we, from (\ref{eq:liftgenAanal19d}), find
\begin{eqnarray}\label{eq:liftgenAanal19g}
\sum_{i_1=1}^{l}\sum_{i_2=1}^{l}\sum_{j=1}^{m} \lp\frac{1}{\mE_{{\mathcal U}_1} Z^{\m_1}} \beta_{i_1}\Theta_1\rp
&  = &  \sqrt{1-t} \beta^2 \lp \langle \|\x^{(i_1)}\|_2^2\|\y^{(i_2)}\|_2^2\rangle_{\gamma_{01}^{(1)}} +  (s-1) \langle \|\x^{(i_1)}\|_2^2(\y^{(p_2)})^T\y^{(i_2)}\rangle_{\gamma_{02}^{(1)}} \rp. \nonumber \\
 \end{eqnarray}
From (\ref{eq:liftgenAanal19c}), we also have
\begin{eqnarray}\label{eq:liftgenAanal19h}
\sum_{i_1=1}^{l}\sum_{i_2=1}^{l}\sum_{j=1}^{m} \lp\frac{1}{\mE_{{\mathcal U}_1} Z^{\m_1}} \beta_{i_1}\Theta_2\rp
 & = & - \sqrt{1-t} s(1-\m_1) \mE_{G,{\mathcal U}_2} \Bigg( \Bigg. \frac{Z^{\m_1}}{\mE_{{\mathcal U}_1} Z^{\m_1}} \sum_{i_1=1}^{l}\frac{(C^{(i_1)})^s}{Z}\sum_{i_2=1}^{l}
\frac{A^{(i_1,i_2)}}{C^{(i_1)}} \nonumber \\
& & \times
 \sum_{p_1=1}^{l} \frac{(C^{(p_1)})^s}{Z}\sum_{p_2=1}^{l}\frac{A^{(p_1,p_2)}}{C^{(p_1)}} \beta_{i_1}\beta_{p_1}(\y^{(p_2)})^T\y^{(i_2)} \Bigg.\Bigg)\nonumber \\
& =& - \sqrt{1-t} s\beta^2(1-\m_1) \mE_{G,{\mathcal U}_2} \langle \|\x^{(i_1)}\|_2\|\x^{(p_1)}\|_2(\y^{(p_2)})^T\y^{(i_2)} \rangle_{\gamma_{1}^{(1)}},
\end{eqnarray}
and after combining  (\ref{eq:liftgenAanal19a}), (\ref{eq:liftgenAanal19g}), and (\ref{eq:liftgenAanal19h}) we have
\begin{eqnarray}\label{eq:liftgenAanal19i}
\sum_{i_1=1}^{l}\sum_{i_2=1}^{l}\sum_{j=1}^{m} \beta_{i_1}\frac{\sqrt{1-t}}{\sqrt{\p_0-\p_1}}T_{1,1,j}^{\p}
& = &  \sqrt{1-t} \mE_{G,{\mathcal U}_2} \lp\frac{1}{\mE_{{\mathcal U}_1} Z^{\m_1}}\lp \beta_{i_1}\Theta_1+\beta_{i_1}\Theta_2 \rp\rp\nonumber \\
& = & (1-t)\beta^2 \nonumber \\
 & &
 \times \lp \mE_{G,{\mathcal U}_2} \langle \|\x^{(i_1)}\|_2^2\|\y^{(i_2)}\|_2^2\rangle_{\gamma_{01}^{(1)}} +  (s-1)\mE_{G,{\mathcal U}_2}\langle \|\x^{(i_1)}\|_2^2(\y^{(p_2)})^T\y^{(i_2)}\rangle_{\gamma_{02}^{(1)}} \rp \nonumber \\
& & - (1-t)s\beta^2(1-\m_1)  \mE_{G,{\mathcal U}_2}  \langle \|\x^{(i_1)}\|_2\|\x^{(p_1)}\|_2(\y^{(p_2)})^T\y^{(i_2)} \rangle_{\gamma_{1}^{(1)}}.
\end{eqnarray}

As a side note, we observe that the  $\gamma$ measures from (\ref{eq:genAanal19e1}) (as well as the $\Phi(\cdot)$ operators from (\ref{eq:genAanal19e})) are  functions of $t$. One then trivially has that all functions that depend on $\gamma$'s, that will be utilized throughout the paper, are functions of $t$ as well. To keep notation as light as it can be, we skip stating this $t$-dependence explicitly, and, instead, assume it implicitly.

\underline{\textbf{\emph{Determining}} $T_{1,2}^{\q}$}
\label{sec:hand1T12}

As above, we follow the Gaussian integration by parts path to find
\begin{eqnarray}\label{eq:liftgenBanal20}
T_{1,2}^{\q} & = & \mE_{G,{\mathcal U}_2} \lp \frac{1}{\mE_{{\mathcal U}_1} Z^{\m_1}}\mE_{{\mathcal U}_1} \frac{(C^{(i_1)})^{s-1} A^{(i_1,i_2)}\u^{(i_1,3,1)}}{Z^{1-\m_1}}\rp \nonumber \\
& = & \mE_{G,{\mathcal U}_2} \lp \frac{1}{\mE_{{\mathcal U}_1} Z^{\m_1}}
\mE_{{\mathcal U}_1} \sum_{p_1=1}^{l}\mE_{{\mathcal U}_1}(\u^{(i_1,3,1)}\u^{(p_1,3,1)}) \frac{d}{d\u^{(p_1,3,1)}}\lp\frac{(C^{(i_1)})^{s-1} A^{(i_1,i_2)}}{Z^{1-\m_1}}\rp\rp \nonumber \\
& = & \frac{\sqrt{\q_0-\q_1}}{\q_0-\q_1}\mE_{G,{\mathcal U}_2} \Bigg(\Bigg.  \frac{1}{\mE_{{\mathcal U}_1} Z^{\m_1}}
\mE_{{\mathcal U}_1} \sum_{p_1=1}^{l}\frac{(\sqrt{\q_0-\q_1}\x^{(i_1)})^T\sqrt{\q_0-\q_1}\x^{(p_1)}}{\|\x^{(i_1)}\|_2\|\x^{(p_1)}\|_2} \nonumber \\
& & \times
\frac{d}{d\lp \sqrt{\q_0-\q_1} \u^{(p_1,3,1)}\rp}\lp\frac{(C^{(i_1)})^{s-1} A^{(i_1,i_2)}}{Z^{1-\m_1}}\rp\Bigg.\Bigg) \nonumber\\
& = & \frac{\sqrt{\q_0-\q_1}}{\q_0-\q_1} T_{1,2}
\end{eqnarray}
where   \cite{Stojnicnflgscompyx23} determined
\begin{eqnarray}\label{eq:liftgenBanal20a0}
\sum_{i_1=1}^{l}\sum_{i_2=1}^{l} \beta_{i_1}\|\y^{(i_2)}\|_2 T_{1,2} & = & \sqrt{1-t}(\q_0-\q_1)\beta^2
\Bigg( \Bigg.\mE_{G,{\mathcal U}_2}\langle \|\x^{(i_1)}\|_2^2\|\y^{(i_2)}\|_2^2\rangle_{\gamma_{01}^{(1)}} \nonumber \\
& & +   (s-1)\mE_{G,{\mathcal U}_2}\langle \|\x^{(i_1)}\|_2^2 \|\y^{(i_2)}\|_2\|\y^{(p_2)}\|_2\rangle_{\gamma_{02}^{(1)}}\Bigg.\Bigg)  \nonumber \\
& & - \sqrt{1-t}(\q_0-\q_1)s\beta^2(1-\m_1)\mE_{G,{\mathcal U}_2}\langle (\x^{(p_1)})^T\x^{(i_1)}\|\y^{(i_2)}\|_2\|\y^{(p_2)}\|_2 \rangle_{\gamma_{1}^{(1)}}.\nonumber \\
\end{eqnarray}
Combining (\ref{eq:liftgenBanal20}) and (\ref{eq:liftgenBanal20a0}), we can then immediately write
\begin{eqnarray}\label{eq:liftgenBanal20b}
\sum_{i_1=1}^{l}\sum_{i_2=1}^{l} \beta_{i_1}\|\y^{(i_2)}\|_2 \frac{\sqrt{1-t}}{\sqrt{\q_0-\q_1}}T_{1,2}^{\q} & = & (1-t)\beta^2
\Bigg( \Bigg.\mE_{G,{\mathcal U}_2}\langle \|\x^{(i_1)}\|_2^2\|\y^{(i_2)}\|_2^2\rangle_{\gamma_{01}^{(1)}} \nonumber \\
& & +   (s-1)\mE_{G,{\mathcal U}_2}\langle \|\x^{(i_1)}\|_2^2 \|\y^{(i_2)}\|_2\|\y^{(p_2)}\|_2\rangle_{\gamma_{02}^{(1)}}\Bigg.\Bigg)  \nonumber \\
& & - (1-t)s\beta^2(1-\m_1)\mE_{G,{\mathcal U}_2}\langle (\x^{(p_1)})^T\x^{(i_1)}\|\y^{(i_2)}\|_2\|\y^{(p_2)}\|_2 \rangle_{\gamma_{1}^{(1)}}.\nonumber \\
\end{eqnarray}

\underline{\textbf{\emph{Determining}} $T_{1,3}^{(\p,\q)}$}
\label{sec:hand1T13}

Proceeding as in \cite{Stojnicnflgscompyx23} and relying on the Gaussian integration by parts, we obtain
\begin{eqnarray}\label{eq:liftgenCanal21}
T_{1,3}^{(\p,\q)} & = & \mE_{G,{\mathcal U}_2} \lp \frac{1}{\mE_{{\mathcal U}_1} Z^{\m_1}}\mE_{{\mathcal U}_1}  \frac{(C^{(i_1)})^{s-1} A^{(i_1,i_2)}u^{(4,1)}}{Z^{1-\m_1}} \rp \nonumber \\
& = & \mE_{G,{\mathcal U}_2} \lp \frac{1}{\mE_{{\mathcal U}_1} Z^{\m_1}}\mE_{{\mathcal U}_1} \lp\mE_{{\mathcal U}_1} (u^{(4,1)}u^{(4,1)})\lp\frac{d}{du^{(4,1)}} \lp\frac{(C^{(i_1)})^{s-1} A^{(i_1,i_2)}u^{(4,1)}}{Z^{1-\m_1}}\rp \rp\rp\rp \nonumber \\
 & = & \frac{\sqrt{\p_0\q_0-\p_1\q_1}}{\p_0\q_0-\p_1\q_1}\mE_{G,{\mathcal U}_2} \Bigg( \Bigg.\frac{\mE_{{\mathcal U}_1} (\sqrt{\p_0\q_0-\p_1\q_1}u^{(4,1)}\sqrt{\p_0\q_0-\p_1\q_1}u^{(4,1)})}{\mE_{{\mathcal U}_1} Z^{\m_1}} \nonumber \\
 & & \times \mE_{{\mathcal U}_1} \lp\frac{d}{d \lp \sqrt{\p_0\q_0-\p_1\q_1} u^{(4,1)} \rp} \lp\frac{(C^{(i_1)})^{s-1} A^{(i_1,i_2)}u^{(4,1)}}{Z^{1-\m_1}}\rp\rp\Bigg.\Bigg) \nonumber \\
 & = & \frac{1}{\sqrt{\p_0\q_0-\p_1\q_1}}T_{1,3},
\end{eqnarray}
where   \cite{Stojnicnflgscompyx23} obtained
\begin{eqnarray}\label{eq:liftgenCanal21b01}
\sum_{i_1=1}^{l}\sum_{i_2=1}^{l} \beta_{i_1}\|\y^{(i_2)}\|_2 T_{1,2} & = & \sqrt{t}(\p_0\q_0-\p_1\q_1)\beta^2 \Bigg( \Bigg. \mE_{G,{\mathcal U}_2}\langle \|\x^{(i_1)}\|_2^2\|\y^{(i_2)}\|_2^2\rangle_{\gamma_{01}^{(1)}} \nonumber \\
& & +   (s-1)\mE_{G,{\mathcal U}_2}\langle \|\x^{(i_1)}\|_2^2 \|\y^{(i_2)}\|_2\|\y^{(p_2)}\|_2\rangle_{\gamma_{02}^{(1)}}\Bigg.\Bigg) \nonumber \\
& & - \sqrt{t}(\p_0\q_0-\p_1\q_1)s\beta^2(1-\m_1)\mE_{G,{\mathcal U}_2}\langle \|\x^{(i_1)}\|_2\|\x^{(p_`)}\|_2\|\y^{(i_2)}\|_2\|\y^{(p_2)}\|_2 \rangle_{\gamma_{1}^{(1)}}. \nonumber \\
\end{eqnarray}
Analogously to (\ref{eq:liftgenBanal20b}), we have
\begin{eqnarray}\label{eq:liftgenCanal21b}
\sum_{i_1=1}^{l}\sum_{i_2=1}^{l} \beta_{i_1}\|\y^{(i_2)}\|_2 \frac{\sqrt{t}}{\sqrt{\p_0\q_0-\p_1\q_1}}T_{1,3}^{(\p,\q)} & = & t\beta^2 \Bigg( \Bigg. \mE_{G,{\mathcal U}_2}\langle \|\x^{(i_1)}\|_2^2\|\y^{(i_2)}\|_2^2\rangle_{\gamma_{01}^{(1)}} \nonumber \\
& & +   (s-1)\mE_{G,{\mathcal U}_2}\langle \|\x^{(i_1)}\|_2^2 \|\y^{(i_2)}\|_2\|\y^{(p_2)}\|_2\rangle_{\gamma_{02}^{(1)}}\Bigg.\Bigg) \nonumber \\
& & - ts\beta^2(1-\m_1)\mE_{G,{\mathcal U}_2}\langle \|\x^{(i_1)}\|_2\|\x^{(p_`)}\|_2\|\y^{(i_2)}\|_2\|\y^{(p_2)}\|_2 \rangle_{\gamma_{1}^{(1)}}. \nonumber \\
\end{eqnarray}

\subsubsection{Handling $T_2$--group}
\label{sec:handlT2}

Similarly to the above handling of $T_1$--group, we handle separately each of the three terms that $T_2$'s contribution is comprised of.

\underline{\textbf{\emph{Determining}} $T_{2,1,j}^{\p}$}
\label{sec:hand1T21}

Utilizing the Gaussian integration by parts, we have
\begin{eqnarray}\label{eq:genDanal19}
T_{2,1,j}^{\p}& = &  \mE_{G,{\mathcal U}_2}\lp \frac{1}{\mE_{{\mathcal U}_1} Z^{\m_1}}\mE_{{\mathcal U}_1}\frac{(C^{(i_1)})^{s-1} A^{(i_1,i_2)} \y_j^{(i_2)}\u_j^{(2,2)}}{Z^{1-\m_1}} \rp \nonumber \\
 &  = & \mE_{G,{\mathcal U}_2,{\mathcal U}_1}  \frac{(C^{(i_1)})^{s-1} A^{(i_1,i_2)}\y_j^{(i_2)}\u_j^{(2,2)}}{Z^{1-\m_1}\mE_{{\mathcal U}_1} Z^{\m_1}} \nonumber \\
& = &
\mE_{G,{\mathcal U}_1}\lp\mE_{{\mathcal U}_2}\lp\mE_{{\mathcal U}_2} (\u_j^{(2,2)}\u_j^{(2,2)})\frac{d}{d\u_j^{(2,2)}}\lp \frac{(C^{(i_1)})^{s-1} A^{(i_1,i_2)}\y_j^{(i_2)}}{Z^{1-\m_1}\mE_{{\mathcal U}_1} Z^{\m_1}}\rp\rp\rp \nonumber \\
& = &
\frac{\sqrt{\p_1}}{\p_1}
\Bigg( \Bigg.
\mE_{G,{\mathcal U}_2,{\mathcal U}_1}\lp \frac{\mE_{{\mathcal U}_2} (\sqrt{\p_1}\u_j^{(2,2)}\sqrt{\p_1}\u_j^{(2,2)})}{\mE_{{\mathcal U}_1} Z^{\m_1}}\frac{d}{d\lp \sqrt{\p_1}\u_j^{(2,2)}\rp}\lp \frac{(C^{(i_1)})^{s-1} A^{(i_1,i_2)}\y_j^{(i_2)}}{Z^{1-\m_1}}\rp\rp \nonumber \\
& & + \mE_{G,{\mathcal U}_2,{\mathcal U}_1}\lp \frac{\mE_{{\mathcal U}_2} (\sqrt{\p_1}\u_j^{(2,2)}\sqrt{\p_1}\u_j^{(2,2)})\lp(C^{(i_1)})^{s-1} A^{(i_1,i_2)}\y_j^{(i_2)} \rp}{Z^{1-\m_1}}\frac{d}{d\lp\sqrt{\p_1} \u_j^{(2,2)}\rp}\lp \frac{1}{\mE_{{\mathcal U}_1} Z^{\m_1}}\rp\rp  \Bigg. \Bigg) \nonumber \\
& = & \frac{\sqrt{\p_1}}{\p_1}T_{2,1,j},
\end{eqnarray}
where \cite{Stojnicnflgscompyx23} determined
 \begin{eqnarray}\label{eq:genDanal25a01}
 \sum_{i_1=1}^{l}  \sum_{i_2=1}^{l} \sum_{j=1}^{m}  \beta_{i_1}T_{2,1,j}& = & \sqrt{1-t}\p_1\beta^2
 \Bigg(\Bigg. \mE_{G,{\mathcal U}_2}\langle \|\x^{(i_1)}\|_2^2\|\y^{(i_2)}\|_2^2\rangle_{\gamma_{01}^{(1)}} \nonumber \\
 & & +  (s-1)\mE_{G,{\mathcal U}_2}\langle \|\x^{(i_1)}\|_2^2(\y^{(p_2)})^T\y^{(i_2)}\rangle_{\gamma_{02}^{(1)}} \Bigg.\Bigg) \nonumber \\
& & - \sqrt{1-t}\p_1s\beta^2(1-\m_1)\mE_{G,{\mathcal U}_2}\langle \|\x^{(i_1)}\|_2\|\x^{(p_1)}\|_2(\y^{(p_2)})^T\y^{(i_2)} \rangle_{\gamma_{1}^{(1)}}\nonumber \\
 &   &
  -\sqrt{1-t}\p_1s\beta^2\m_1\mE_{G,{\mathcal U}_2} \langle \|\x^{(i_1)}\|_2\|\x^{(p_1)}\|_2(\y^{(p_2)})^T\y^{(i_2)} \rangle_{\gamma_{2}^{(1)}}.
\end{eqnarray}
A combination of (\ref{eq:genDanal19}) and (\ref{eq:genDanal25a01}) gives
\begin{eqnarray}\label{eq:genDanal25}
 \sum_{i_1=1}^{l}  \sum_{i_2=1}^{l} \sum_{j=1}^{m}  \beta_{i_1}\frac{\sqrt{1-t}}{\sqrt{\p_1}}T_{2,1,j}^{\p}& = & (1-t)\beta^2
 \Bigg(\Bigg. \mE_{G,{\mathcal U}_2}\langle \|\x^{(i_1)}\|_2^2\|\y^{(i_2)}\|_2^2\rangle_{\gamma_{01}^{(1)}} \nonumber \\
 & & +  (s-1)\mE_{G,{\mathcal U}_2}\langle \|\x^{(i_1)}\|_2^2(\y^{(p_2)})^T\y^{(i_2)}\rangle_{\gamma_{02}^{(1)}} \Bigg.\Bigg) \nonumber \\
& & - (1-t)s\beta^2(1-\m_1)\mE_{G,{\mathcal U}_2}\langle \|\x^{(i_1)}\|_2\|\x^{(p_1)}\|_2(\y^{(p_2)})^T\y^{(i_2)} \rangle_{\gamma_{1}^{(1)}}\nonumber \\
 &   &
  -(1-t)s\beta^2\m_1\mE_{G,{\mathcal U}_2} \langle \|\x^{(i_1)}\|_2\|\x^{(p_1)}\|_2(\y^{(p_2)})^T\y^{(i_2)} \rangle_{\gamma_{2}^{(1)}}.
\end{eqnarray}

\underline{\textbf{\emph{Determining}} $T_{2,2}^{\q}$}
\label{sec:hand1T22}

Applying again Gaussian integration by parts, we obtain
{\small \begin{eqnarray}\label{eq:liftgenEanal20}
T_{2,2}^{\q} & = &  \mE_{G,{\mathcal U}_2} \lp \frac{1}{\mE_{{\mathcal U}_1} Z^{\m_1}}\mE_{{\mathcal U}_1}\frac{(C^{(i_1)})^{s-1} A^{(i_1,i_2)} \u^{(i_1,3,2)}}{Z^{1-\m_1}} \rp \nonumber \\
& = &  \mE_{G,{\mathcal U}_2,{\mathcal U}_1} \lp\frac{1}{\mE_{{\mathcal U}_1} Z^{\m_1}} \frac{(C^{(i_1)})^{s-1} A^{(i_1,i_2)} \u^{(i_1,3,2)}}{Z^{1-\m_1}}\rp\nonumber \\
& = & \mE_{G,{\mathcal U}_1} \lp
\mE_{{\mathcal U}_2} \lp \sum_{p_1=1}^{l}\mE_{{\mathcal U}_2}(\u^{(i_1,3,2)}\u^{(p_1,3,2)}) \frac{d}{d\u^{(p_1,3,2)}}\lp\frac{(C^{(i_1)})^{s-1} A^{(i_1,i_2)}}{Z^{1-\m_1}\mE_{{\mathcal U}_1} Z^{\m_1}}\rp\rp\rp \nonumber \\
& = & \frac{\sqrt{\q_1}}{\q_1}
\Bigg( \Bigg.
\mE_{G,{\mathcal U}_2,{\mathcal U}_1} \lp
 \frac{1}{\mE_{{\mathcal U}_1} Z^{\m_1}} \sum_{p_1=1}^{l}\mE_{{\mathcal U}_2}(\sqrt{\q_1}\u^{(i_1,3,2)}\sqrt{\q_1}\u^{(p_1,3,2)}) \frac{d}{d \lp \sqrt{\q_1}\u^{(p_1,3,2)}\rp}\lp\frac{(C^{(i_1)})^{s-1} A^{(i_1,i_2)}}{Z^{1-\m_1}}\rp\rp \nonumber \\
& & + \mE_{G,{\mathcal U}_2,{\mathcal U}_1} \lp \frac{(C^{(i_1)})^{s-1} A^{(i_1,i_2)}}{Z^{1-\m_1}}
  \sum_{p_1=1}^{l}\mE_{{\mathcal U}_2}(\sqrt{\q_1}\u^{(i_1,3,2)}\sqrt{\q_1}\u^{(p_1,3,2)}) \frac{d}{d \lp\sqrt{\q_1}  \u^{(p_1,3,2)}\rp}\lp\frac{1}{\mE_{{\mathcal U}_1} Z^{\m_1}}\rp\rp  \Bigg. \Bigg) \nonumber \\
  & = & \frac{\sqrt{\q_1}}{\q_1} T_{2,2},
 \end{eqnarray}}

\noindent where \cite{Stojnicnflgscompyx23} obtained
 \begin{eqnarray}\label{eq:genEanal25a01}
\sum_{i_1=1}^{l}\sum_{i_2=1}^{l} \beta_{i_1}\|\y^{(i_2)}\|_2 T_{2,2} &  = &
\sqrt{1-t}\q_1\beta^2 \Bigg( \Bigg. \mE_{G,{\mathcal U}_2}\langle \|\x^{(i_1)}\|_2^2\|\y^{(i_2)}\|_2^2\rangle_{\gamma_{01}^{(1)}} \nonumber \\
& & +  (s-1)\mE_{G,{\mathcal U}_2}\langle \|\x^{(i_1)}\|_2^2 \|\y^{(i_2)}\|_2\|\y^{(p_2)}\|_2\rangle_{\gamma_{02}^{(1)}}\Bigg.\Bigg) \nonumber \\
& & - \sqrt{1-t}\q_1s\beta^2(1-\m_1)\mE_{G,{\mathcal U}_2}\langle (\x^{(p_1)})^T\x^{(i_1)}\|\y^{(i_2)}\|_2\|\y^{(p_2)}\|_2 \rangle_{\gamma_{1}^{(1)}} \nonumber \\
&  & -\sqrt{1-t}\q_1s\beta^2\m_1\mE_{G,{\mathcal U}_2} \langle \|\y^{(i_2)}\|_2\|\y^{(p_2)}\|_2(\x^{(i_1)})^T\x^{(p_1)}\rangle_{\gamma_{2}^{(1)}}.
\end{eqnarray}
Combining (\ref{eq:liftgenEanal20}) and (\ref{eq:genEanal25a01}) we find
  \begin{eqnarray}\label{eq:genEanal25}
\sum_{i_1=1}^{l}\sum_{i_2=1}^{l} \beta_{i_1}\|\y^{(i_2)}\|_2\frac{\sqrt{1-t}}{\sqrt{\q_1}}T_{2,2}^{\q} &  = &
(1-t)\beta^2 \Bigg( \Bigg. \mE_{G,{\mathcal U}_2}\langle \|\x^{(i_1)}\|_2^2\|\y^{(i_2)}\|_2^2\rangle_{\gamma_{01}^{(1)}} \nonumber \\
& & +  (s-1)\mE_{G,{\mathcal U}_2}\langle \|\x^{(i_1)}\|_2^2 \|\y^{(i_2)}\|_2\|\y^{(p_2)}\|_2\rangle_{\gamma_{02}^{(1)}}\Bigg.\Bigg) \nonumber \\
& & - \q_1s\beta^2(1-\m_1)\mE_{G,{\mathcal U}_2}\langle (\x^{(p_1)})^T\x^{(i_1)}\|\y^{(i_2)}\|_2\|\y^{(p_2)}\|_2 \rangle_{\gamma_{1}^{(1)}} \nonumber \\
&  & -(1-t)s\beta^2\m_1\mE_{G,{\mathcal U}_2} \langle \|\y^{(i_2)}\|_2\|\y^{(p_2)}\|_2(\x^{(i_1)})^T\x^{(p_1)}\rangle_{\gamma_{2}^{(1)}}.
\end{eqnarray}

\underline{\textbf{\emph{Determining}} $T_{2,3}^{(\p,\q)}$}
\label{sec:hand1T23}

Gaussian integration by parts also gives
\begin{eqnarray}\label{eq:genFanal21}
T_{2,3}^{(\p,\q)} & = &  \mE_{G,{\mathcal U}_2}\lp \frac{1}{\mE_{{\mathcal U}_1} Z^{\m_1}}\mE_{{\mathcal U}_1}\frac{(C^{(i_1)})^{s-1} A^{(i_1,i_2)} u^{(4,2)}}{Z^{1-\m_1}} \rp \nonumber \\
& = & \mE_{G,{\mathcal U}_1} \lp  \mE_{{\mathcal U}_2} \lp\mE_{{\mathcal U}_2} (u^{(4,2)}u^{(4,2)})\lp\frac{d}{du^{(4,2)}} \lp\frac{(C^{(i_1)})^{s-1} A^{(i_1,i_2)}}{Z^{1-\m_1}\mE_{{\mathcal U}_1} Z^{\m_1}}\rp \rp\rp\rp \nonumber \\
& = & \frac{\sqrt{\p_1\q_1}}{\p_1\q_1}\mE_{G,{\mathcal U}_2,{\mathcal U}_1} \lp \frac{\mE_{{\mathcal U}_2} (\sqrt{\p_1\q_1}u^{(4,2)}\sqrt{\p_1\q_1}u^{(4,2)})}{\mE_{{\mathcal U}_1} Z^{\m_1}} \lp\frac{d}{d \lp \sqrt{\p_1\q_1}u^{(4,2)}\rp} \lp\frac{(C^{(i_1)})^{s-1} A^{(i_1,i_2)}}{Z^{1-\m_1}}\rp\rp\rp \nonumber \\
& & + \frac{\sqrt{\p_1\q_1}}{\p_1\q_1} \nonumber \\
& & \times \mE_{G,{\mathcal U}_2,{\mathcal U}_1} \lp \frac{\mE_{{\mathcal U}_2} (\sqrt{\p_1\q_1}u^{(4,2)}\sqrt{\p_1\q_1}u^{(4,2)})(C^{(i_1)})^{s-1} A^{(i_1,i_2)}}{Z^{1-\m_1}}\lp\frac{d}{d\lp \sqrt{\p_1\q_1} u^{(4,2)}\rp} \lp\frac{1}{\mE_{{\mathcal U}_1} Z^{\m_1}} \rp\rp\rp \nonumber \\
& = & \frac{\sqrt{\p_1\q_1}}{\p_1\q_1} T_{2,3},
\end{eqnarray}
where \cite{Stojnicnflgscompyx23} obtained
 \begin{eqnarray}\label{eq:genFanal29a01}
\sum_{i_1=1}^{l}\sum_{i_2=1}^{l} \beta_{i_1}\|\y^{(i_2)}\|_2 T_{2,3}
& = &
\sqrt{t} \p_1\q_1\beta^2 \Bigg( \Bigg. \mE_{G,{\mathcal U}_2}\langle \|\x^{(i_1)}\|_2^2\|\y^{(i_2)}\|_2^2\rangle_{\gamma_{01}^{(1)}} \nonumber \\
& & +   (s-1)\mE_{G,{\mathcal U}_2}\langle \|\x^{(i_1)}\|_2^2 \|\y^{(i_2)}\|_2\|\y^{(p_2)}\|_2\rangle_{\gamma_{02}^{(1)}}\Bigg.\Bigg) \nonumber \\
& & - \sqrt{t}\p_1\q_1 s\beta^2(1-\m_1)\mE_{G,{\mathcal U}_2}\langle \|\x^{(i_1)}\|_2\|\x^{(p_`)}\|_2\|\y^{(i_2)}\|_2\|\y^{(p_2)}\|_2 \rangle_{\gamma_{1}^{(1)}} \nonumber \\
&  & -\sqrt{t}s\beta^2\p_1\q_1\m_1\mE_{G,{\mathcal U}_2} \langle\|\x^{(i_2)}\|_2\|\x^{(p_2)}\|_2\|\y^{(i_2)}\|_2\|\y^{(p_2)}\rangle_{\gamma_{2}^{(1)}}.
\end{eqnarray}
Combining (\ref{eq:genFanal21}) and (\ref{eq:genFanal29a01}), we find
 \begin{eqnarray}\label{eq:genFanal29}
\sum_{i_1=1}^{l}\sum_{i_2=1}^{l} \beta_{i_1}\|\y^{(i_2)}\|_2\frac{\sqrt{t}}{\sqrt{\p_1\q_1}}T_{2,3}^{(\p,\q)}
& = &
t\beta^2 \Bigg( \Bigg. \mE_{G,{\mathcal U}_2}\langle \|\x^{(i_1)}\|_2^2\|\y^{(i_2)}\|_2^2\rangle_{\gamma_{01}^{(1)}} \nonumber \\
& & +   (s-1)\mE_{G,{\mathcal U}_2}\langle \|\x^{(i_1)}\|_2^2 \|\y^{(i_2)}\|_2\|\y^{(p_2)}\|_2\rangle_{\gamma_{02}^{(1)}}\Bigg.\Bigg) \nonumber \\
& & - t s\beta^2(1-\m_1)\mE_{G,{\mathcal U}_2}\langle \|\x^{(i_1)}\|_2\|\x^{(p_`)}\|_2\|\y^{(i_2)}\|_2\|\y^{(p_2)}\|_2 \rangle_{\gamma_{1}^{(1)}} \nonumber \\
&  & -ts\beta^2\m_1\mE_{G,{\mathcal U}_2} \langle\|\x^{(i_2)}\|_2\|\x^{(p_2)}\|_2\|\y^{(i_2)}\|_2\|\y^{(p_2)}\rangle_{\gamma_{2}^{(1)}}.
\end{eqnarray}

\subsubsection{Connecting all pieces together}
\label{sec:conalt}

We now connect together all the pieces obtained above. To do so, we utilize (\ref{eq:genanal10e}) and (\ref{eq:genanal10f}) to write
\begin{eqnarray}\label{eq:ctp1}
\frac{d\psi(\calX,\calY,\q,\m,\beta,s,t)}{d\p_1}  & = &       \frac{\mbox{sign}(s)}{2\beta\sqrt{n}} \lp \Omega_1+\q_1\Omega_3\rp \nonumber \\
\frac{d\psi(\calX,\calY,\q,\m,\beta,s,t)}{d\q_1}  & = &       \frac{\mbox{sign}(s)}{2\beta\sqrt{n}} \lp \Omega_2+\p_1\Omega_3\rp,
\end{eqnarray}
where
\begin{eqnarray}\label{eq:ctp2}
\Omega_1 & = & \sum_{i_1=1}^{l}  \sum_{i_2=1}^{l} \sum_{j=1}^{m}\beta_{i_1}\frac{\sqrt{1-t}}{\sqrt{\p_1}}T_{2,1,j}^{\p}-\sum_{i_1=1}^{l}  \sum_{i_2=1}^{l} \sum_{j=1}^{m}\beta_{i_1}\frac{\sqrt{1-t}}{\sqrt{\p_0-\p_1}}T_{1,1,j}^{\p} \nonumber\\
\Omega_2 & = & \sum_{i_1=1}^{l}  \sum_{i_2=1}^{l}\beta_{i_1}\|\y^{(i_2)}\|_2\frac{\sqrt{1-t}}{\sqrt{\q_1}}T_{2,2}^{\q}-\sum_{i_1=1}^{l}  \sum_{i_2=1}^{l}\beta_{i_1}\|\y^{(i_2)}\|_2\frac{\sqrt{1-t}}{\q_0-\q_1}T_{1,2}^{\q} \nonumber\\
\Omega_3 & = & \sum_{i_1=1}^{l}  \sum_{i_2=1}^{l}\beta_{i_1}\|\y^{(i_2)}\|_2\frac{\sqrt{t}}{\sqrt{\p_1\q_1}}T_{2,3}^{(\p,\q)}- \sum_{i_1=1}^{l}  \sum_{i_2=1}^{l}\beta_{i_1}\|\y^{(i_2)}\|_2\frac{\sqrt{t}}{\p_0\q_0-\p_1\q_1}T_{1,3}^{(\p,\q)}.
\end{eqnarray}
 From (\ref{eq:liftgenAanal19i}) and (\ref{eq:genDanal25}), we have
\begin{eqnarray}\label{eq:cpt4}
-\Omega_1& = & (1-t)\beta^2 \lp \mE_{G,{\mathcal U}_2}\langle \|\x^{(i_1)}\|_2^2\|\y^{(i_2)}\|_2^2\rangle_{\gamma_{01}^{(1)}} +   (s-1)\mE_{G,{\mathcal U}_2}\langle \|\x^{(i_1)}\|_2^2(\y^{(p_2)})^T\y^{(i_2)}\rangle_{\gamma_{02}^{(1)}} \rp \nonumber \\
& & - (1-t)s\beta^2(1-\m_1)\mE_{G,{\mathcal U}_2}\langle \|\x^{(i_1)}\|_2\|\x^{(p_1)}\|_2(\y^{(p_2)})^T\y^{(i_2)} \rangle_{\gamma_{1}^{(1)}} \nonumber\\
& & -(1-t)\beta^2 \lp \mE_{G,{\mathcal U}_2}\langle \|\x^{(i_1)}\|_2^2\|\y^{(i_2)}\|_2^2\rangle_{\gamma_{01}^{(1)}} +   (s-1)\mE_{G,{\mathcal U}_2}\langle \|\x^{(i_1)}\|_2^2(\y^{(p_2)})^T\y^{(i_2)}\rangle_{\gamma_{02}^{(1)}} \rp \nonumber \\
& & +(1-t)s\beta^2(1-\m_1)\mE_{G,{\mathcal U}_2}\langle \|\x^{(i_1)}\|_2\|\x^{(p_1)}\|_2(\y^{(p_2)})^T\y^{(i_2)} \rangle_{\gamma_{1}^{(1)}}\nonumber \\
 &   &
  +(1-t)s\beta^2\m_1\mE_{G,{\mathcal U}_2} \langle \|\x^{(i_1)}\|_2\|\x^{(p_1)}\|_2(\y^{(p_2)})^T\y^{(i_2)} \rangle_{\gamma_{2}^{(1)}} \nonumber \\
& = &
  (1-t)s\beta^2\m_1\mE_{G,{\mathcal U}_2} \langle \|\x^{(i_1)}\|_2\|\x^{(p_1)}\|_2(\y^{(p_2)})^T\y^{(i_2)} \rangle_{\gamma_{2}^{(1)}}.
\end{eqnarray}
From (\ref{eq:liftgenBanal20b}) and (\ref{eq:genEanal25}), we have
\begin{eqnarray}\label{eq:cpt5}
-\Omega_2 & = & (1-t)\beta^2 \lp\mE_{G,{\mathcal U}_2}\langle \|\x^{(i_1)}\|_2^2\|\y^{(i_2)}\|_2^2\rangle_{\gamma_{01}^{(1)}} +   (s-1)\mE_{G,{\mathcal U}_2}\langle \|\x^{(i_1)}\|_2^2 \|\y^{(i_2)}\|_2\|\y^{(p_2)}\|_2\rangle_{\gamma_{02}^{(1)}}\rp\nonumber \\
& & - (1-t) s\beta^2(1-\m_1)\mE_{G,{\mathcal U}_2}\langle (\x^{(p_1)})^T\x^{(i_1)}\|\y^{(i_2)}\|_2\|\y^{(p_2)}\|_2 \rangle_{\gamma_{1}^{(1)}}\nonumber \\
&  & -
(1-t)\beta^2\lp\mE_{G,{\mathcal U}_2}\langle \|\x^{(i_1)}\|_2^2\|\y^{(i_2)}\|_2^2\rangle_{\gamma_{01}^{(1)}} +   (s-1)\mE_{G,{\mathcal U}_2}\langle \|\x^{(i_1)}\|_2^2 \|\y^{(i_2)}\|_2\|\y^{(p_2)}\|_2\rangle_{\gamma_{02}^{(1)}}\rp\nonumber \\
& & + (1-t) s\beta^2(1-\m_1)\mE_{G,{\mathcal U}_2}\langle (\x^{(p_1)})^T\x^{(i_1)}\|\y^{(i_2)}\|_2\|\y^{(p_2)}\|_2 \rangle_{\gamma_{1}^{(1)}} \nonumber \\
&  & + (1-t) s\beta^2 \m_1\mE_{G,{\mathcal U}_2} \langle \|\y^{(i_2)}\|_2\|\y^{(p_2)}\|_2(\x^{(i_1)})^T\x^{(p_1)}\rangle_{\gamma_{2}^{(1)}} \nonumber \\
&  = &
   (1-t) s\beta^2\m_1\mE_{G,{\mathcal U}_2} \langle \|\y^{(i_2)}\|_2\|\y^{(p_2)}\|_2(\x^{(i_1)})^T\x^{(p_1)}\rangle_{\gamma_{2}^{(1)}}.
\end{eqnarray}
From (\ref{eq:liftgenCanal21b}) and (\ref{eq:genFanal29}), we have
  \begin{eqnarray}\label{eq:cpt6}
-\Omega_3 & = & t \beta^2 \lp \mE_{G,{\mathcal U}_2}\langle \|\x^{(i_1)}\|_2^2\|\y^{(i_2)}\|_2^2\rangle_{\gamma_{01}^{(1)}} +   (s-1)\mE_{G,{\mathcal U}_2}\langle \|\x^{(i_1)}\|_2^2 \|\y^{(i_2)}\|_2\|\y^{(p_2)}\|_2\rangle_{\gamma_{02}^{(1)}}\rp\nonumber \\
& & - ts\beta^2(1-\m_1)\mE_{G,{\mathcal U}_2}\langle \|\x^{(i_1)}\|_2\|\x^{(p_`)}\|_2\|\y^{(i_2)}\|_2\|\y^{(p_2)}\|_2 \rangle_{\gamma_{1}^{(1)}}\nonumber \\
&  & -
t\beta^2\lp\mE_{G,{\mathcal U}_2}\langle \|\x^{(i_1)}\|_2^2\|\y^{(i_2)}\|_2^2\rangle_{\gamma_{01}^{(1)}} +   (s-1)\mE_{G,{\mathcal U}_2}\langle \|\x^{(i_1)}\|_2^2 \|\y^{(i_2)}\|_2\|\y^{(p_2)}\|_2\rangle_{\gamma_{02}^{(1)}}\rp\nonumber \\
& & + t s\beta^2(1-\m_1)\mE_{G,{\mathcal U}_2}\langle \|\x^{(i_1)}\|_2\|\x^{(p_`)}\|_2\|\y^{(i_2)}\|_2\|\y^{(p_2)}\|_2 \rangle_{\gamma_{1}^{(1)}} \nonumber \\
&  & ts\beta^2\m_1\mE_{G,{\mathcal U}_2} \mE_{G,{\mathcal U}_2}\langle\|\x^{(i_2)}\|_2\|\x^{(p_2)}\|_2\|\y^{(i_2)}\|_2\|\y^{(p_2)}\rangle_{\gamma_{2}^{(1)}} \nonumber \\
& = &
 ts\beta^2
\m_1\mE_{G,{\mathcal U}_2} \langle\|\x^{(i_1)}\|_2\|\x^{(p_1)}\|_2\|\y^{(i_2)}\|_2\|\y^{(p_2)}\rangle_{\gamma_{2}^{(1)}}.
\end{eqnarray}
Finally, combining (\ref{eq:ctp1}) with (\ref{eq:cpt4})-(\ref{eq:cpt6}), we obtain
\begin{eqnarray}\label{eq:cpt7}
\frac{d\psi(\calX,\calY,\q,\m,\beta,s,t)}{d\p_1}  & = &       \frac{\mbox{sign}(s)\beta}{2\sqrt{n}} \phi^{(1,\p)}\nonumber \\
\frac{d\psi(\calX,\calY,\q,\m,\beta,s,t)}{d\q_1}  & = &       \frac{\mbox{sign}(s)\beta}{2\sqrt{n}} \phi^{(1,\q)},
 \end{eqnarray}
where
\begin{eqnarray}\label{eq:cpt8}
\phi^{(1,\p)} & = &
  -(1-t)s\beta^2\m_1\mE_{G,{\mathcal U}_2} \langle \|\x^{(i_1)}\|_2\|\x^{(p_1)}\|_2(\y^{(p_2)})^T\y^{(i_2)} \rangle_{\gamma_{2}^{(1)}} \nonumber \\
& &   - ts\beta^2\q_1
\m_1\mE_{G,{\mathcal U}_2} \langle\|\x^{(i_1)}\|_2\|\x^{(p_1)}\|_2\|\y^{(i_2)}\|_2\|\y^{(p_2)}\rangle_{\gamma_{2}^{(1)}}\nonumber \\
\phi^{(1,\q)} & = &
   -(1-t) s\beta^2\m_1\mE_{G,{\mathcal U}_2} \langle \|\y^{(i_2)}\|_2\|\y^{(p_2)}\|_2(\x^{(i_1)})^T\x^{(p_1)}\rangle_{\gamma_{2}^{(1)}} \nonumber \\
& &   - ts\beta^2\p_1
\m_1\mE_{G,{\mathcal U}_2} \langle\|\x^{(i_1)}\|_2\|\x^{(p_1)}\|_2\|\y^{(i_2)}\|_2\|\y^{(p_2)}\rangle_{\gamma_{2}^{(1)}}.
\end{eqnarray}

We summarize the above into the following proposition.
\begin{proposition}
\label{thm:thm1} Consider scalar $\m_1$, vector $\p=[\p_0,\p_1,\p_2]$ with $\p_0\geq \p_1\geq \p_2=0$,  and vector $\q=[\q_0,\q_1,\q_2]$ with $\q_0\geq \q_1\geq \q_2=0$. Let $k\in\{1,2\}$ and $G\in\mR^{m \times n},u^{(4,k)}\in\mR^1,\u^{(2,k)}\in\mR^{m\times 1}$, and $\h^{(k)}\in\mR^{n\times 1}$ all have i.i.d. standard normal components (they are then independent of each other as well). Let $a_k=\sqrt{\p_{k-1}\q_{k-1}-\p_k\q_k}$, $b_k=\sqrt{\p_{k-1}-\p_k}$, $c_k=\sqrt{\q_{k-1}-\q_k}$, and  ${\mathcal U}_k=[u^{(4,k)},\u^{(2,k)},\h^{(k)}]$. Assume that set ${\mathcal X}=\{\x^{(1)},\x^{(2)},\dots,\x^{(l)}\}$, where $\x^{(i)}\in \mR^{n},1\leq i\leq l$, and set ${\mathcal Y}=\{\y^{(1)},\y^{(2)},\dots,\y^{(l)}\}$, where $\y^{(i)}\in \mR^{m},1\leq i\leq l$ are given and that $\beta\geq 0$ and $s$ are real numbers. Consider the following function
\begin{equation}\label{eq:prop1eq1}
\psi(\calX,\calY,\p,\q,\m,\beta,s,t)  =  \mE_{G,{\mathcal U}_2} \frac{1}{\beta|s|\sqrt{n}\m_1} \log \mE_{{\mathcal U}_1} \lp \sum_{i_1=1}^{l}\lp\sum_{i_2=1}^{l}e^{\beta D_0^{(i_1,i_2)}} \rp^{s}\rp^{\m_1},
\end{equation}
where
\begin{eqnarray}\label{eq:prop1eq2}
 D_0^{(i_1,i_2)} & \triangleq & \sqrt{t}(\y^{(i_2)})^T
 G\x^{(i_1)}+\sqrt{1-t}\|\x^{(i_2)}\|_2 (\y^{(i_2)})^T(b_2\u^{(2,1)}+b_2\u^{(2,2)})\nonumber \\
 & & +\sqrt{t}\|\x^{(i_1)}\|_2\|\y^{(i_2)}\|_2(a_1u^{(4,1)}+a_2u^{(4,2)}) +\sqrt{1-t}\|\y^{(i_2)}\|_2(c_1\h^{(1)}+c_2\h^{(2)})^T\x^{(i)}.
 \end{eqnarray}
Then
\begin{eqnarray}\label{eq:prop1eq3}
\frac{d\psi(\calX,\calY,\q,\m,\beta,s,t)}{d\p_1}  & = &       \frac{\mbox{sign}(s)\beta}{2\sqrt{n}} \phi^{(1,\p)}\nonumber \\
\frac{d\psi(\calX,\calY,\q,\m,\beta,s,t)}{d\q_1}  & = &       \frac{\mbox{sign}(s)\beta}{2\sqrt{n}} \phi^{(1,\q)},
 \end{eqnarray}
where $\phi$'s are as in (\ref{eq:cpt8}).
\end{proposition}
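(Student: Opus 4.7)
The plan is to differentiate $\psi(\calX,\calY,\p,\q,\m,\beta,s,t)$ with respect to $\p_1$ (and then $\q_1$) directly from the definition (\ref{eq:genanal3}), pushing the derivative through the outer expectation and logarithm so that a $1/\mE_{{\mathcal U}_1} Z^{\m_1}$ denominator appears, and then through the inner expectation via the chain rule to produce a sum of expectations involving $dD^{(i_1,i_2)}/d\p_1$. Expanding this derivative as in (\ref{eq:genanal10a}) and regrouping the contributions that depend on ${\mathcal U}_2$ versus ${\mathcal U}_1$ isolates six scalar building blocks $T_{2,1,j}^\p, T_{2,2}^\q, T_{2,3}^{(\p,\q)}, T_{1,1,j}^\p, T_{1,2}^\q, T_{1,3}^{(\p,\q)}$ defined in (\ref{eq:genanal10g}) and (\ref{eq:genanal10g1}). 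Once these are evaluated, the desired derivative follows from (\ref{eq:genanal10e})/(\ref{eq:genanal10e1}).

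Each of the six terms is then handled by Gaussian integration by parts along the lines of \cite{Stojnicnflgscompyx23}. The key structural observation is that for the $T_1$-group the Gaussian variable being integrated out is independent of the outer expectation, so only $d/d(\cdot)$ acting on the inner $\mE_{{\mathcal U}_1}(\cdot)$ object contributes, whereas for the $T_2$-group there is an additional derivative of $1/\mE_{{\mathcal U}_1} Z^{\m_1}$, which is precisely where a $\gamma_2^{(1)}$-bracket term appears. After substituting the correct effective variances ($\p_0-\p_1, \p_1, \q_0-\q_1, \q_1, \p_0\q_0-\p_1\q_1, \p_1\q_1$ respectively, according to the $a_k,b_k,c_k$ weights from the statement), every term reduces to a combination of brackets against $\gamma_{01}^{(1)}, \gamma_{02}^{(1)}, \gamma_1^{(1)}$, plus a $\gamma_2^{(1)}$ term in the $T_2$ case. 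Because the structure of the inner integration-by-parts calculation is exactly the one already carried out in \cite{Stojnicnflgscompyx23}, the identities (\ref{eq:liftgenAanal19c})--(\ref{eq:liftgenAanal19h}) and their analogues for $T_{1,2}, T_{1,3}, T_{2,2}, T_{2,3}$ can be imported directly; the scaling factor $1/\sqrt{\p_0-\p_1}$ (or the corresponding $1/\sqrt{\p_1}$, etc.) multiplying each term in (\ref{eq:genanal10f})/(\ref{eq:genanal10f1}) exactly cancels the $\sqrt{\p_0-\p_1}$ (or $\sqrt{\p_1}$) coming out of integration by parts.

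The final, and cleanest, step is the cancellation in (\ref{eq:cpt4})--(\ref{eq:cpt6}). Organizing the six contributions into the pairs $\Omega_1, \Omega_2, \Omega_3$ of (\ref{eq:ctp2}) and using (\ref{eq:ctp1}), the $\gamma_{01}^{(1)}, \gamma_{02}^{(1)}, \gamma_1^{(1)}$ pieces appearing in the $T_2$-expression match term-for-term those in the corresponding $T_1$-expression and drop out, leaving only the $\gamma_2^{(1)}$ bracket that the $T_2$-group uniquely carries. Substituting back and tracking the $\sqrt{1-t}$ vs.\ $\sqrt{t}$ prefactors and the $\q_1, \p_1$ multipliers in front of $\Omega_3$ (which arise because the $a_k$ weights are bilinear in $\p,\q$) yields the explicit $\phi^{(1,\p)}, \phi^{(1,\q)}$ of (\ref{eq:cpt8}), completing the proof.

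The main obstacle is purely bookkeeping: one must align the rescalings induced by the bilinear $a_k=\sqrt{\p_{k-1}\q_{k-1}-\p_k\q_k}$ weights with the $1/\sqrt{\p_1\q_1}$ and $1/\sqrt{\p_0\q_0-\p_1\q_1}$ prefactors in (\ref{eq:genanal10f})/(\ref{eq:genanal10f1}), and verify that all $\gamma_{01}^{(1)}, \gamma_{02}^{(1)}, \gamma_1^{(1)}$ contributions cancel between the $T_1$- and $T_2$-members of each pair. Once this is done carefully the remaining $\gamma_2^{(1)}$ part can simply be read off, and the proposition follows.
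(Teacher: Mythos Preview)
Your proposal is correct and follows essentially the same approach as the paper: the proposition is stated as a summary of the derivation carried out in Section~\ref{sec:gencon}, and the paper's own proof simply reads ``Follows from the above presentation.'' Your write-up accurately traces that presentation---the chain-rule expansion of $d\psi/d\p_1$ and $d\psi/d\q_1$, the decomposition into the six $T$-terms of (\ref{eq:genanal10g})--(\ref{eq:genanal10g1}), their evaluation via Gaussian integration by parts with the correct variance rescalings, and the $\Omega_1,\Omega_2,\Omega_3$ cancellation in (\ref{eq:cpt4})--(\ref{eq:cpt6}) that leaves only the $\gamma_2^{(1)}$ brackets of (\ref{eq:cpt8}).
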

\begin{proof}
  Follows from the above presentation.
\end{proof}

\section{$\p,\q$-derivatives at the $r$-th level of full lifting}
\label{sec:rthlev}

In this section, we generalize the results of Section \ref{sec:gencon} so that they hold for an arbitrary level, $r\in\mN$,  of full lifting.  All key conceptual steps needed for such a generalization are presented in Section \ref{sec:gencon}. Here, we formalize all the underlying technical details that ultimately allow utilizing the procedure from Section \ref{sec:gencon} for any level of lifting, $r\in\mN$.

We now consider general $r$ vectors $\m=[\m_1,\m_2,...,\m_r]$,
$\p=[\p_0,\p_1,...,\p_r,\p_{r+1}]$ with $\p_0\geq \p_1\geq \p_2\geq \dots \geq \p_r\geq\p_{r+1} = 0$ and $\q=[\q_0,\q_1,\q_2,\dots,\q_r,\q_{r+1}]$ with $\q_0\geq \q_1\geq \q_2\geq \dots \geq \q_r\geq \q_{r+1} = 0$. As earlier, we take $G\in\mR^{m \times n}$ and, for any $k\in\{1,2,\dots,r+1\}$,  $u^{(4,k)}\in\mR^1,\u^{(2,k)}\in\mR^{m\times 1}$, and $\h^{(k)}\in\mR^{n\times 1}$ to all have i.i.d. standard normal components (they are then independent of each other as well). $a_k=\sqrt{\p_{k-1}\q_{k-1}-\p_k\q_k}$, $b_k=\sqrt{\p_{k-1}-\p_k}$, $c_k=\sqrt{\q_{k-1}-\q_k}$ and ${\mathcal U}_k\triangleq [u^{(4,k)},\u^{(2,k)},\h^{(k)}]$.  Assuming that set ${\mathcal X}=\{\x^{(1)},\x^{(2)},\dots,\x^{(l)}\}$, where $\x^{(i)}\in \mR^{n},1\leq i\leq l$, and set ${\mathcal Y}=\{\y^{(1)},\y^{(2)},\dots,\y^{(l)}\}$, where $\y^{(i)}\in \mR^{m},1\leq i\leq l$ are given and that $\beta\geq 0$ and $s$ are real numbers, we are interested in the following function
\begin{equation}\label{eq:rthlev2genanal3}
\psi(\calX,\calY,\p,\q,\m,\beta,s,t)  =  \mE_{G,{\mathcal U}_{r+1}} \frac{1}{\beta|s|\sqrt{n}\m_r} \log
\lp \mE_{{\mathcal U}_{r}} \lp \dots \lp \mE_{{\mathcal U}_2}\lp\lp\mE_{{\mathcal U}_1} \lp Z^{\m_1}\rp\rp^{\frac{\m_2}{\m_1}}\rp\rp^{\frac{\m_3}{\m_2}} \dots \rp^{\frac{\m_{r}}{\m_{r-1}}}\rp,
\end{equation}
where
\begin{eqnarray}\label{eq:rthlev2genanal3a}
Z & \triangleq & \sum_{i_1=1}^{l}\lp\sum_{i_2=1}^{l}e^{\beta D_0^{(i_1,i_2)}} \rp^{s} \nonumber \\
 D_0^{(i_1,i_2)} & \triangleq & \sqrt{t}(\y^{(i_2)})^T
 G\x^{(i_1)}+\sqrt{1-t}\|\x^{(i_1)}\|_2 (\y^{(i_2)})^T\lp\sum_{k=1}^{r+1}b_k\u^{(2,k)}\rp\nonumber \\
 & & +\sqrt{t}\|\x^{(i_1)}\|_2\|\y^{(i_2)}\|_2\lp\sum_{k=1}^{r+1}a_ku^{(4,k)}\rp +\sqrt{1-t}\|\y^{(i_2)}\|_2\lp\sum_{k=1}^{r+1}c_k\h^{(k)}\rp^T\x^{(i_1)}
 \end{eqnarray}
To ensure the convenience of the exposition, we, as earlier and analogously to (\ref{eq:genanal4}), set
\begin{eqnarray}\label{eq:rthlev2genanal4}
\u^{(i_1,1)} & =  & \frac{G\x^{(i_1)}}{\|\x^{(i_1)}\|_2} \nonumber \\
\u^{(i_1,3,k)} & =  & \frac{(\h^{(k)})^T\x^{(i_1)}}{\|\x^{(i_1)}\|_2},
\end{eqnarray}
and recall that
\begin{eqnarray}\label{eq:rthlev2genanal5}
\u_j^{(i_1,1)} & =  & \frac{G_{j,1:n}\x^{(i_1)}}{\|\x^{(i_1)}\|_2},1\leq j\leq m.
\end{eqnarray}
It is relatively easy to see that, for any fixed $i_1$, the elements of $\u^{(i_1,1)}$, $\u^{(2,k)}$, and $\u^{(i_1,3,k)}$ are standard normals.   (\ref{eq:rthlev2genanal3}) can then be rewritten as
\begin{equation}\label{eq:rthlev2genanal6}
\psi(\calX,\calY,\p,\q,\m,\beta,s,t)  =  \mE_{G,{\mathcal U}_{r+1}} \frac{1}{\beta|s|\sqrt{n}\m_r} \log
\lp \mE_{{\mathcal U}_{r}} \lp \dots \lp \mE_{{\mathcal U}_2}\lp\lp\mE_{{\mathcal U}_1} \lp Z^{\m_1}\rp\rp^{\frac{\m_2}{\m_1}}\rp\rp^{\frac{\m_3}{\m_2}} \dots \rp^{\frac{\m_{r}}{\m_{r-1}}}\rp,
\end{equation}
where $\beta_{i_1}=\beta\|\x^{(i_1)}\|_2$ and now
\begin{eqnarray}\label{eq:rthlev2genanal7}
B^{(i_1,i_2)} & \triangleq &  \sqrt{t}(\y^{(i_2)})^T\u^{(i_1,1)}+\sqrt{1-t} (\y^{(i_2)})^T\lp \sum_{k=1}^{r+1}b_k\u^{(2,k)} \rp \nonumber \\
D^{(i_1,i_2)} & \triangleq &  B^{(i_1,i_2)}+\sqrt{t}\|\y^{(i_2)}\|_2 \lp \sum_{k=1}^{r+1}a_ku^{(4,k)}\rp+\sqrt{1-t}\|\y^{(i_2)}\|_2 \lp \sum_{k=1}^{r+1}c_k\u^{(i_1,3,k)}  \rp   \nonumber \\
A^{(i_1,i_2)} & \triangleq &  e^{\beta_{i_1}D^{(i_1,i_2)}}\nonumber \\
C^{(i_1)} & \triangleq &  \sum_{i_2=1}^{l}A^{(i_1,i_2)}\nonumber \\
Z & \triangleq & \sum_{i_1=1}^{l} \lp \sum_{i_2=1}^{l} A^{(i_1,i_2)}\rp^s =\sum_{i_1=1}^{l}  (C^{(i_1)})^s.
\end{eqnarray}
We set $\m_0=1$ and
\begin{eqnarray}\label{eq:rthlev2genanal7a}
\zeta_r\triangleq \mE_{{\mathcal U}_{r}} \lp \dots \lp \mE_{{\mathcal U}_2}\lp\lp\mE_{{\mathcal U}_1} \lp Z^{\frac{\m_1}{\m_0}}\rp\rp^{\frac{\m_2}{\m_1}}\rp\rp^{\frac{\m_3}{\m_2}} \dots \rp^{\frac{\m_{r}}{\m_{r-1}}}, r\geq 1.
\end{eqnarray}
One can then write
\begin{eqnarray}\label{eq:rthlev2genanal7b}
\zeta_k = \mE_{{\mathcal U}_{k}} \lp  \zeta_{k-1} \rp^{\frac{\m_{k}}{\m_{k-1}}}, k\geq 2,\quad \mbox{and} \quad
\zeta_1=\mE_{{\mathcal U}_1} \lp Z^{\frac{\m_1}{\m_0}}\rp.
\end{eqnarray}
For the completeness, we also set $\zeta_0=Z$ and further write for the derivative
\begin{eqnarray}\label{eq:rthlev2genanal9}
\frac{d\psi(\calX,\calY,\q,\m,\beta,s,t)}{d\p_{k_1}}
& = &  \frac{d}{dt}\lp\mE_{G,{\mathcal U}_{r+1}} \frac{1}{\beta|s|\sqrt{n}\m_r} \log \lp \zeta_r\rp \rp \nonumber \\
& = &  \mE_{G,{\mathcal U}_{r+1}} \frac{1}{\beta|s|\sqrt{n}\m_r\zeta_r} \frac{d\zeta_r}{d\p_{k_1}} \nonumber \\
& = &  \mE_{G,{\mathcal U}_{r+1}} \frac{1}{\beta|s|\sqrt{n}\m_{r-1}\zeta_r} \mE_{{\mathcal U}_{r}} \zeta_{r-1}^{\frac{\m_r}{\m_{r-1}}-1} \frac{d\zeta_{r-1}}{d\p_{k_1}} \nonumber \\
& = &  \mE_{G,{\mathcal U}_{r+1}} \frac{1}{\beta|s|\sqrt{n}\m_{r-2}\zeta_r} \mE_{{\mathcal U}_{r}} \zeta_{r-1}^{\frac{\m_r}{\m_{r-1}}-1}
 \mE_{{\mathcal U}_{r-1}} \zeta_{r-2}^{\frac{\m_{r-1}}{\m_{r-2}}-1}
\frac{d\zeta_{r-2}}{d\p_{k_1}} \nonumber \\
& = &  \mE_{G,{\mathcal U}_{r+1}} \frac{1}{\beta|s|\sqrt{n}\m_1\zeta_r}
\prod_{k=r}^{2}\mE_{{\mathcal U}_{k}} \zeta_{k-1}^{\frac{\m_k}{\m_{k-1}}-1}
 \frac{d\zeta_{1}}{dt} \nonumber \\
& = &  \mE_{G,{\mathcal U}_{r+1}} \frac{1}{\beta|s|\sqrt{n}\m_1\zeta_r}
\prod_{k=r}^{2}\mE_{{\mathcal U}_{k}} \zeta_{k-1}^{\frac{\m_k}{\m_{k-1}}-1}
 \frac{d \mE_{{\mathcal U}_1} Z^{\m_1} }{d\p_{k_1}} \nonumber \\
 & = &
\mE_{G,{\mathcal U}_{r+1}} \frac{1}{\beta|s|\sqrt{n}\zeta_r}
\prod_{k=r}^{2}\mE_{{\mathcal U}_{k}} \zeta_{k-1}^{\frac{\m_k}{\m_{k-1}}-1}
 \mE_{{\mathcal U}_1} \frac{1}{Z^{1-\m_1}}  \sum_{i=1}^{l} (C^{(i_1)})^{s-1} \nonumber \\
& & \times \sum_{i_2=1}^{l}\beta_{i_1}A^{(i_1,i_2)}\frac{dD^{(i_1,i_2)}}{d\p_{k_1}},
\end{eqnarray}
where the product runs in an \emph{index decreasing order} and
\begin{eqnarray}\label{eq:rthlev2genanal9a}
\frac{dD^{(i_1,i_2)}}{d\p_{k_1}} & = & \lp \frac{dB^{(i_1,i_2)}}{d\p_{k_1}}+
\sqrt{t} \frac{d \lp\|\y^{(i_2)}\|_2 (\sum_{k=k_1}^{k_1+1} a_ku^{(4,k)}) \rp}{d\p_{k_1}}\rp \nonumber \\
& = & \lp \frac{dB^{(i_1,i_2)}}{d\p_{k_1}}
-\sqrt{t} \q_{k_1}\frac{\|\y^{(i_2)}\|_2 u^{(4,k_1)}}{2\sqrt{\p_{k_1-1}\q_{k_1-1}-\p_{k_1}\q_{k_1}}}+\sqrt{t} \q_{k_1}\frac{\|\y^{(i_2)}\|_2 u^{(4,k_1+1)}}{2\sqrt{\p_{k_1}\q_{k_1}-\p_{k_1+1}\q_{k_1+1}}}\rp.
\end{eqnarray}
Utilization of (\ref{eq:rthlev2genanal7}) gives
\begin{eqnarray}\label{eq:rthlev2genanal10}
\frac{dB^{(i_1,i_2)}}{d\p_{k_1}} & = &   \frac{d\lp\sqrt{t}(\y^{(i_2)})^T\u^{(i_1,1)}+\sqrt{1-t} (\y^{(i_2)})^T(\sum_{k=1}^{r+1} b_k\u^{(2,k)})\rp}{d\p_{k_1}} \nonumber \\
& = &   \frac{d\lp\sqrt{1-t} (\y^{(i_2)})^T(\sum_{k=k_1}^{k_1+1} b_k\u^{(2,k)})\rp}{d\p_{k_1}} \nonumber \\
 & = &
-\sqrt{1-t}\sum_{j=1}^{m}\lp  \frac{\y_j^{(i_2)} \u_j^{(2,k_1)}}{2\sqrt{\p_{k_1-1}-\p_{k_1}}}\rp
+\sqrt{1-t}\sum_{j=1}^{m}\lp  \frac{\y_j^{(i_2)} \u_j^{(2,k_1+1)}}{2\sqrt{\p_{k_1}-\p_{k_1+1}}}\rp.
\end{eqnarray}
Writing analogously to (\ref{eq:genanal10e}), we have
\begin{equation}\label{eq:rthlev2genanal10e}
\frac{d\psi(\calX,\calY,\p,\q,\m,\beta,s,t)}{d\p_{k_1}}  =       \frac{\mbox{sign}(s)}{2\beta\sqrt{n}} \sum_{i_1=1}^{l}  \sum_{i_2=1}^{l}
\beta_{i_1}\lp T_{k_1+1}^{\p}-T_{k_1}^{\p}\rp,
\end{equation}
where
\begin{eqnarray}\label{eq:rthlev2genanal10f}
 T_k^{\p} & = & \frac{\sqrt{1-t}}{\sqrt{\p_{k-1}-\p_{k}}}\sum_{j=1}^{m}T_{k,1,j}^{\p}
 +\frac{\sqrt{t}\q_{k_1}\|\y^{(i_2)}\|_2}{\sqrt{\p_{k-1}\q_{k-1}-\p_{k}\q_{k}}}T_{k,3}^{(\p,\q)}, k\in\{k_1,k_1+1\},
 \end{eqnarray}
 and
 \begin{eqnarray}\label{eq:rthlev2genanal10g}
 T_{k,1,j}^{\p} & = &   \mE_{G,{\mathcal U}_{r+1}} \lp
\zeta_r^{-1}\prod_{v=r}^{2}\mE_{{\mathcal U}_{v}} \zeta_{v-1}^{\frac{\m_v}{\m_{v-1}}-1}
  \mE_{{\mathcal U}_1}\frac{(C^{(i_1)})^{s-1} A^{(i_1,i_2)} \y_j^{(i_2)}\u_j^{(2,k)}}{Z^{1-\m_1}} \rp \nonumber \\
T_{k,3}^{(\p,\q)} & = &  \mE_{G,{\mathcal U}_{r+1}} \lp
\zeta_r^{-1}\prod_{v=r}^{2}\mE_{{\mathcal U}_{v}} \zeta_{v-1}^{\frac{\m_v}{\m_{v-1}}-1}
  \mE_{{\mathcal U}_1}\frac{(C^{(i_1)})^{s-1} A^{(i_1,i_2)} u^{(4,k)}}{Z^{1-\m_1}} \rp.
\end{eqnarray}
Keeping in mind the results for the first level of full lifting one can immediately write analogously to (\ref{eq:rthlev2genanal10e})-(\ref{eq:rthlev2genanal10g})
\begin{equation}\label{eq:rthlev2genanal10e1}
\frac{d\psi(\calX,\calY,\p,\q,\m,\beta,s,t)}{d\q_{k_1}}  =       \frac{\mbox{sign}(s)}{2\beta\sqrt{n}} \sum_{i_1=1}^{l}  \sum_{i_2=1}^{l}
\beta_{i_1}\lp T_{k_1+1}^{\q}-T_{k_1}^{\q}\rp,
\end{equation}
where
\begin{eqnarray}\label{eq:rthlev2genanal10f1}
 T_k^{\q} & = & \frac{\sqrt{1-t}}{\sqrt{\q_{k-1}-\q_{k}}} \|\y^{(i_2)}\|_2 T_{k,2}^{\q}
 +\frac{\sqrt{t}\p_{k_1}\|\y^{(i_2)}\|_2}{\sqrt{\p_{k-1}\q_{k-1}-\p_{k}\q_{k}}}T_{k,3}^{(\p,\q)}, k\in\{k_1,k_1+1\},
 \end{eqnarray}
 and
 \begin{eqnarray}\label{eq:rthlev2genanal10g1}
T_{k,2}^{\p} & = &   \mE_{G,{\mathcal U}_{r+1}} \lp
\zeta_r^{-1}\prod_{v=r}^{2}\mE_{{\mathcal U}_{v}} \zeta_{v-1}^{\frac{\m_v}{\m_{v-1}}-1}
  \mE_{{\mathcal U}_1}\frac{(C^{(i_1)})^{s-1} A^{(i_1,i_2)} \u^{(i_1,3,k)}}{Z^{1-\m_1}} \rp \nonumber \\
T_{k,3}^{(\p,\q)} & = &  \mE_{G,{\mathcal U}_{r+1}} \lp
\zeta_r^{-1}\prod_{v=r}^{2}\mE_{{\mathcal U}_{v}} \zeta_{v-1}^{\frac{\m_v}{\m_{v-1}}-1}
  \mE_{{\mathcal U}_1}\frac{(C^{(i_1)})^{s-1} A^{(i_1,i_2)} u^{(4,k)}}{Z^{1-\m_1}} \rp.
\end{eqnarray}
We clearly observe three sequences $\lp T_{k,1,j}^{\p}\rp_{k=1:r+1}$, $\lp T_{k,2}^{\q}\rp_{k=1:r+1}$, and $\lp T_{k,3}^{(\p,\q)}\rp_{k=1:r+1}$. Similarly to what was observed in \cite{Stojnicnflgscompyx23}, for all our purposes, the internal connections among the components within each of these three sequences are identical. We will describe how to handle one of them, $\lp T_{k,1,j}^{\p}\rp_{k=1:r+1}$. For the remaining two, we then quickly deduce the final results.

\subsection{Scaling + reweightedly averaged new terms}
\label{sec:scaledcanout}

As we below formalize the relation between successive sequence elements, $T_{k_1,1,j}$ and $T_{k_1+1,1,j}$ (for $k_1\geq 2$), it will be clear that there are two key parts of such a recursive/inductive relation: 1) \emph{successive scaling}; and 2) appearance of an appropriately \emph{reweightedly averaged (over a $\gamma$ measure) new term}. A similar concept was observed in \cite{Stojnicnflgscompyx23} as well.

We start by writing
 \begin{eqnarray}\label{eq:rthlev2genanal11}
 T_{k_1,1,j}^{\p} & = &   \mE_{G,{\mathcal U}_{r+1}} \lp
\zeta_r^{-1}\prod_{v=r}^{2}\mE_{{\mathcal U}_{v}} \zeta_{v-1}^{\frac{\m_v}{\m_{v-1}}-1}
  \mE_{{\mathcal U}_1}\frac{(C^{(i_1)})^{s-1} A^{(i_1,i_2)} \y_j^{(i_2)}\u_j^{(2,k_1)}}{Z^{1-\m_1}} \rp \nonumber \\
 & = &   \mE_{G,{\mathcal U}_{r+1}} \lp
\zeta_r^{-1}\prod_{v=r}^{k_1+1}\mE_{{\mathcal U}_{v}} \zeta_{v-1}^{\frac{\m_v}{\m_{v-1}}-1}
\prod_{v=k_1}^{2}\mE_{{\mathcal U}_{v}} \zeta_{v-1}^{\frac{\m_v}{\m_{v-1}}-1}
  \mE_{{\mathcal U}_1}\frac{(C^{(i_1)})^{s-1} A^{(i_1,i_2)} \y_j^{(i_2)}\u_j^{(2,k_1)}}{Z^{1-\m_1}} \rp \nonumber \\
    & = &   \mE_{G,{\mathcal U}_{r+1}} \Bigg(\Bigg.
\zeta_r^{-1}\prod_{v=r}^{k_1+1}\mE_{{\mathcal U}_{v}} \zeta_{v-1}^{\frac{\m_v}{\m_{v-1}}-1}
\mE_{{\mathcal U}_{k_1}} \mE_{{\mathcal U}_{k_1}} \lp \u_j^{(2,k_1)}\u_j^{(2,k_1)}\rp \nonumber \\
& & \times
\frac{d}{d\u_j^{(2,k_1)}} \lp \zeta_{k_1-1}^{\frac{\m_{k_1}}{\m_{k_1-1}}-1}\prod_{v=k_1-1}^{2}\mE_{{\mathcal U}_{v}} \zeta_{v-1}^{\frac{\m_v}{\m_{v-1}}-1}
  \mE_{{\mathcal U}_1}\frac{(C^{(i_1)})^{s-1} A^{(i_1,i_2)} \y_j^{(i_2)}\u_j^{(2,k_1)}}{Z^{1-\m_1}} \rp \Bigg.\Bigg) \nonumber \\
    & = &  \frac{\sqrt{\p_{k_1-1}-\p_{k_1}}}{\p_{k_1-1}-\p_{k_1}} \nonumber \\
    & & \times \mE_{G,{\mathcal U}_{r+1}} \Bigg(\Bigg.
\zeta_r^{-1}\prod_{v=r}^{k_1+1}\mE_{{\mathcal U}_{v}} \zeta_{v-1}^{\frac{\m_v}{\m_{v-1}}-1}
\mE_{{\mathcal U}_{k_1}} \mE_{{\mathcal U}_{k_1}} \lp \sqrt{\p_{k_1-1}-\p_{k_1}}\u_j^{(2,k_1)}\sqrt{\p_{k_1-1}-\p_{k_1}}\u_j^{(2,k_1)}\rp \nonumber \\
& & \times
\frac{d}{d\lp \sqrt{\p_{k_1-1}-\p_{k_1}} \u_j^{(2,k_1)} \rp} \lp \zeta_{k_1-1}^{\frac{\m_{k_1}}{\m_{k_1-1}}-1}\prod_{v=k_1-1}^{2}\mE_{{\mathcal U}_{v}} \zeta_{v-1}^{\frac{\m_v}{\m_{v-1}}-1}
  \mE_{{\mathcal U}_1}\frac{(C^{(i_1)})^{s-1} A^{(i_1,i_2)} \y_j^{(i_2)}\u_j^{(2,k_1)}}{Z^{1-\m_1}} \rp \Bigg.\Bigg) \nonumber \\
   & = &  \frac{\sqrt{\p_{k_1-1}-\p_{k_1}}}{\p_{k_1-1}-\p_{k_1}}  T_{k_1,1,j},
\end{eqnarray}
where \cite{Stojnicnflgscompyx23} determined $T_{k_1,1,j}$.

Moreover,
 \begin{eqnarray}\label{eq:rthlev2genanal12}
 T_{k_1+1,1,j}^{\p}
  & = &   \mE_{G,{\mathcal U}_{r+1}} \lp
\zeta_r^{-1}\prod_{v=r}^{k_1+2}\mE_{{\mathcal U}_{v}} \zeta_{v-1}^{\frac{\m_v}{\m_{v-1}}-1}
\prod_{v=k_1+1}^{2}\mE_{{\mathcal U}_{v}} \zeta_{v-1}^{\frac{\m_v}{\m_{v-1}}-1}
  \mE_{{\mathcal U}_1}\frac{(C^{(i_1)})^{s-1} A^{(i_1,i_2)} \y_j^{(i_2)}\u_j^{(2,k_1+1)}}{Z^{1-\m_1}} \rp \nonumber \\
    & = &   \mE_{G,{\mathcal U}_{r+1}} \Bigg( \Bigg.
\zeta_r^{-1}\prod_{v=r}^{k_1+2}\mE_{{\mathcal U}_{v}} \zeta_{v-1}^{\frac{\m_v}{\m_{v-1}}-1}
\mE_{{\mathcal U}_{k_1+1}}   \nonumber \\
& & \times  \zeta_{k_1}^{\frac{\m_{k_1+1}}{\m_{k_1}}-1}
\prod_{v=k_1}^{2}\mE_{{\mathcal U}_{v}} \zeta_{v-1}^{\frac{\m_v}{\m_{v-1}}-1}
  \mE_{{\mathcal U}_1}\frac{(C^{(i_1)})^{s-1} A^{(i_1,i_2)} \y_j^{(i_2)}\u_j^{(2,k_1+1)}}{Z^{1-\m_1}} \Bigg. \Bigg) \nonumber \\
      & = &   \mE_{G,{\mathcal U}_{r+1}} \Bigg( \Bigg.
\zeta_r^{-1}\prod_{v=r}^{k_1+2}\mE_{{\mathcal U}_{v}} \zeta_{v-1}^{\frac{\m_v}{\m_{v-1}}-1}
\mE_{{\mathcal U}_{k_1+1}} \mE_{{\mathcal U}_{k_1+1}} (\u_j^{(2,k_1+1)}\u_j^{(2,k_1+1)}) \nonumber \\
& & \times  \frac{d}{d\u_j^{(2,k_1+1)}}  \lp \zeta_{k_1}^{\frac{\m_{k_1+1}}{\m_{k_1}}-1}
\prod_{v=k_1}^{2}\mE_{{\mathcal U}_{v}} \zeta_{v-1}^{\frac{\m_v}{\m_{v-1}}-1}
  \mE_{{\mathcal U}_1}\frac{(C^{(i_1)})^{s-1} A^{(i_1,i_2)} \y_j^{(i_2)}}{Z^{1-\m_1}} \rp   \Bigg. \Bigg)   \nonumber \\
      & = &  \frac{\sqrt{\p_{k_1} -\p_{k_1+1}}
}{\p_{k_1} -\p_{k_1+1}} \nonumber \\
& & \times \mE_{G,{\mathcal U}_{r+1}} \Bigg( \Bigg.
\zeta_r^{-1}\prod_{v=r}^{k_1+2}\mE_{{\mathcal U}_{v}} \zeta_{v-1}^{\frac{\m_v}{\m_{v-1}}-1}
\mE_{{\mathcal U}_{k_1+1}} \mE_{{\mathcal U}_{k_1+1}} (\sqrt{\p_{k_1} -\p_{k_1+1}}
\u_j^{(2,k_1+1)}\sqrt{\p_{k_1} -\p_{k_1+1}}
\u_j^{(2,k_1+1)}) \nonumber \\
& & \times  \frac{d}{d \lp \sqrt{\p_{k_1} -\p_{k_1+1}}
 \u_j^{(2,k_1+1)}\rp}  \lp \zeta_{k_1}^{\frac{\m_{k_1+1}}{\m_{k_1}}-1}
\prod_{v=k_1}^{2}\mE_{{\mathcal U}_{v}} \zeta_{v-1}^{\frac{\m_v}{\m_{v-1}}-1}
  \mE_{{\mathcal U}_1}\frac{(C^{(i_1)})^{s-1} A^{(i_1,i_2)} \y_j^{(i_2)}}{Z^{1-\m_1}} \rp   \Bigg. \Bigg)   \nonumber \\
      & = &  \frac{\sqrt{\p_{k_1} -\p_{k_1+1}}
}{\p_{k_1} -\p_{k_1+1}} T_{k_1+1,1,j} \nonumber \\
 & = &  \frac{\sqrt{\p_{k_1} -\p_{k_1+1}}
}{\p_{k_1} -\p_{k_1+1}}\lp \Pi_{k_1+1,1,j}^{(1)}
+  \Pi_{k_1+1,1,j}^{(2)}\rp,
 \end{eqnarray}
where all three $T_{k_1+1,1,j}$, $\Pi_{k_1+1,1,j}^{(1)}$, and $\Pi_{k_1+1,1,j}^{(1)}$ are determined in \cite{Stojnicnflgscompyx23}. In particular,  \cite{Stojnicnflgscompyx23} established
 \begin{eqnarray}\label{eq:rthlev2genanal19}
 \Pi_{k_1+1,1,j}^{(1)}=\frac{(\p_{k_1} -\p_{k_1+1})}{(\p_{k_1-1}-\p_{k_1})}T_{k_1,1,j},
 \end{eqnarray}
and (while recalling that $\zeta_0=Z$ and $\m_0=1$) also
\begin{eqnarray}\label{eq:rthlev2genanal26}
 \sum_{i_1=1}^{l}\sum_{i_2=1}^{l} \sum_{j=1}^{m}
\beta_{i_1} \Pi_{k_1+1,1,j}^{(2)}
  & = & -\sqrt{1-t} s\beta^2(\p_{k_1}-\p_{k_1+1}) \lp \m_{k_1} - \m_{k_1+1} \rp \nonumber\\
 & & \times
  \mE_{G,{\mathcal U}_{r+1}} \left \langle \|\x^{(i_1)}\|_2\|\x^{(p_1)}\|_2 (\y^{(p_2)})^T \y^{(i_2)} \right \rangle_{\gamma_{k_1+1}^{(r)}}.
\end{eqnarray}
Combining (\ref{eq:rthlev2genanal12}), (\ref{eq:rthlev2genanal19}), and (\ref{eq:rthlev2genanal26}), we find
 \begin{eqnarray}\label{eq:rthlev2genanal27}
 \sum_{i_1=1}^{l}\sum_{i_2=1}^{l} \sum_{j=1}^{m}
 \frac{\beta_{i_1}\sqrt{1-t}}{\sqrt{\p_{k_1}-\p_{k_1+1}}} T_{k_1+1,1,j}^{\p}
& = &
 \sum_{i_1=1}^{l}\sum_{i_2=1}^{l} \sum_{j=1}^{m}
\beta_{i_1} \frac{\sqrt{1-t}}{\p_{k_1}-\p_{k_1+1}}     \lp \Pi_{k_1+1,1,j}^{(1)}
+  \Pi_{k_1+1,1,j}^{(2)}\rp \nonumber \\
& = &
 \sum_{i_1=1}^{l}\sum_{i_2=1}^{l} \sum_{j=1}^{m}
 \frac{\beta_{i_1}\sqrt{1-t}T_{k_1,1,j}}{\p_{k_1-1}-\p_{k_1}}  +   \sum_{i_1=1}^{l}\sum_{i_2=1}^{l} \sum_{j=1}^{m}
 \frac{\beta_{i_1}\sqrt{1-t} \Pi_{k_1+1,1,j}^{(2)}}{\p_{k_1}-\p_{k_1+1}} \nonumber \\
& = &
 \sum_{i_1=1}^{l}\sum_{i_2=1}^{l} \sum_{j=1}^{m}
 \frac{\beta_{i_1}\sqrt{1-t}T_{k_1,1,j}^{\p}}{\sqrt{\p_{k_1-1}-\p_{k_1}}}  +   \sum_{i_1=1}^{l}\sum_{i_2=1}^{l} \sum_{j=1}^{m}
 \frac{\beta_{i_1}\sqrt{1-t} \Pi_{k_1+1,1,j}^{(2)}}{\p_{k_1}-\p_{k_1+1}} \nonumber \\
 & = & \sum_{i_1=1}^{l}\sum_{i_2=1}^{l} \sum_{j=1}^{m}
 \frac{\beta_{i_1}\sqrt{1-t}T_{k_1,1,j}^{\p}}{\sqrt{\p_{k_1-1}-\p_{k_1}}}   \nonumber \\
 &  & - (1-t)\Bigg( \Bigg. s\beta^2 \lp \m_{k_1} -  \m_{k_1+1} \rp  \nonumber \\
 & & \times
  \mE_{G,{\mathcal U}_{r+1}} \left \langle \|\x^{(i_1)}\|_2\|\x^{(p_1)}\|_2 (\y^{(p_2)})^T \y^{(i_2)} \right \rangle_{\gamma_{k_1+1}^{(r)} } \Bigg. \Bigg).
\end{eqnarray}
Repeating the above mechanism and keeping in mind the results from Section \ref{sec:gencon} related to the first level of full lifting, one can then analogously write for the remaining two sequences $\lp T_{k,2}^{\q}\rp_{k=1:r+1}$ and $\lp T_{k,3}^{(\p,\q)}\rp_{k=1:r+1}$
\begin{eqnarray}\label{eq:rthlev2genanal28}
 \sum_{i_1=1}^{l}\sum_{i_2=1}^{l}
 \frac{\beta_{i_1}\sqrt{1-t} \|\y^{(i_2)}\|_2}{\sqrt{\q_{k_1}-\q_{k_1+1}}} T_{k_1+1,2}^{\q}
 & = &  \sum_{i_1=1}^{l}\sum_{i_2=1}^{l} \sum_{j=1}^{m}
 \frac{\beta_{i_1}\sqrt{1-t} \|\y^{(i_2)}\|_2T_{k_1,1,j}^{\q}}{\sqrt{\q_{k_1-1}-\q_{k_1}}} \nonumber \\
 &  & - (1-t) \Bigg( \Bigg. s\beta^2 \lp \m_{k_1} - \m_{k_1+1} \rp \nonumber \\
 & &
\times  \mE_{G,{\mathcal U}_{r+1}} \left \langle   (\x^{(p_1)})^T \x^{(i_1)} \|\y^{(i_2)}\|_2\|\y^{(p_2)}\|_2  \right \rangle_{\gamma_{k_1+1}^{(r)} } \Bigg.\Bigg), \nonumber \\
\end{eqnarray}
and
 \begin{eqnarray}\label{eq:rthlev2genanal29}
 \sum_{i_1=1}^{l}\sum_{i_2=1}^{l}
 \frac{\beta_{i_1}\sqrt{t} \|\y^{(i_2)}\|_2}{\sqrt{\p_{k_1}\q_{k_1}-\p_{k_1+1}\q_{k_1+1}}} T_{k_1+1,3}^{(\p,\q)}
 & = &   \sum_{i_1=1}^{l}\sum_{i_2=1}^{l}
\frac{\beta_{i_1} \sqrt{t} \|\y^{(i_2)}\|_2}{\sqrt{\p_{k_1-1}\q_{k_1-1}-\p_{k_1}\q_{k_1}}} T_{k_1,3}^{(\p,\q)}
\nonumber \\
 &  & - t\Bigg( \Bigg. s\beta^2 \lp  \m_{k_1} - \m_{k_1+1} \rp \nonumber \\
& & \times   \mE_{G,{\mathcal U}_{r+1}} \left \langle   \|\x^{(i_1)}\|_2\|\x^{(p_1)}\|_2 \|\y^{(i_2)}\|_2\|\y^{(p_2)}\|_2  \right \rangle_{\gamma_{k_1+1}^{(r)} } \Bigg. \Bigg).
\end{eqnarray}
One now observes the \emph{scaled canceling out} mechanism similar to the one observed in \cite{Stojnicnflgscompyx23}. Basically, adding the succesive elements in each of these three sequences, as suggested in (\ref{eq:rthlev2genanal10e})-(\ref{eq:rthlev2genanal10f}) and (\ref{eq:rthlev2genanal10e1})-(\ref{eq:rthlev2genanal10f1}), results in canceling out parts of the first and second summands.

\subsection{Connecting all pieces together}
\label{sec:rthall}

We can now summarize the above discussion into the following theorem.
\begin{theorem}
\label{thm:thm3}
Let $r\in\mN$ and $k\in\{1,2,\dots,r+1\}$ and consider vectors $\m=[\m_0,\m_1,\m_2,...,\m_r,\m_{r+1}]$ with $\m_0=1$ and $\m_{r+1}=0$,
$\p=[\p_0,\p_1,...,\p_r,\p_{r+1}]$ with $1\geq\p_0\geq \p_1\geq \p_2\geq \dots \geq \p_r\geq \p_{r+1} =0$ and $\q=[\q_0,\q_1,\q_2,\dots,\q_r,\q_{r+1}]$ with $1\geq\q_0\geq \q_1\geq \q_2\geq \dots \geq \q_r\geq \q_{r+1} = 0$. Let the components of $G\in\mR^{m\times n}$, $u^{(4,k)}\in\mR$, $\u^{(2,k)}\in\mR^m$, and $\h^{(k)}\in\mR^n$ be i.i.d. standard normals. Also, let $a_k=\sqrt{\p_{k-1}\q_{k-1}-\p_k\q_k}$, $b_k=\sqrt{\p_{k-1}-\p_{k}}$, and $c_k=\sqrt{\q_{k-1}-\q_{k}}$, and let ${\mathcal U}_k\triangleq [u^{(4,k)},\u^{(2,k)},\h^{(2k)}]$.  Assuming that set ${\mathcal X}=\{\x^{(1)},\x^{(2)},\dots,\x^{(l)}\}$, where $\x^{(i)}\in \mR^{n},1\leq i\leq l$, set ${\mathcal Y}=\{\y^{(1)},\y^{(2)},\dots,\y^{(l)}\}$, where $\y^{(i)}\in \mR^{m},1\leq i\leq l$, and scalars $\beta\geq 0$ and $s\in\mR$ are given, and consider the following function
\begin{equation}\label{eq:thm3eq1}
\psi(\calX,\calY,\p,\q,\m,\beta,s,t)  =  \mE_{G,{\mathcal U}_{r+1}} \frac{1}{\beta|s|\sqrt{n}\m_r} \log
\lp \mE_{{\mathcal U}_{r}} \lp \dots \lp \mE_{{\mathcal U}_2}\lp\lp\mE_{{\mathcal U}_1} \lp Z^{\m_1}\rp\rp^{\frac{\m_2}{\m_1}}\rp\rp^{\frac{\m_3}{\m_2}} \dots \rp^{\frac{\m_{r}}{\m_{r-1}}}\rp,
\end{equation}
where
\begin{eqnarray}\label{eq:thm3eq2}
Z & \triangleq & \sum_{i_1=1}^{l}\lp\sum_{i_2=1}^{l}e^{\beta D_0^{(i_1,i_2)}} \rp^{s} \nonumber \\
 D_0^{(i_1,i_2)} & \triangleq & \sqrt{t}(\y^{(i_2)})^T
 G\x^{(i_1)}+\sqrt{1-t}\|\x^{(i_2)}\|_2 (\y^{(i_2)})^T\lp\sum_{k=1}^{r+1}b_k\u^{(2,k)}\rp\nonumber \\
 & & +\sqrt{t}\|\x^{(i_1)}\|_2\|\y^{(i_2)}\|_2\lp\sum_{k=1}^{r+1}a_ku^{(4,k)}\rp +\sqrt{1-t}\|\y^{(i_2)}\|_2\lp\sum_{k=1}^{r+1}c_k\h^{(k)}\rp^T\x^{(i)}
 \end{eqnarray}
Let $\zeta_0=Z$ and let $\zeta_k,k\geq 1$, be as defined in (\ref{eq:rthlev2genanal7a}) and (\ref{eq:rthlev2genanal7b}). Moreover, consider the operators
\begin{eqnarray}\label{eq:thm3eq3}
 \Phi_{{\mathcal U}_k} & \triangleq &  \mE_{{\mathcal U}_{k}} \frac{\zeta_{k-1}^{\frac{\m_k}{\m_{k-1}}}}{\zeta_k},
 \end{eqnarray}
and set
\begin{eqnarray}\label{eq:thm3eq4}
  \gamma_0(i_1,i_2) & = &
\frac{(C^{(i_1)})^{s}}{Z}  \frac{A^{(i_1,i_2)}}{C^{(i_1)}} \nonumber \\
\gamma_{01}^{(r)}  & = & \prod_{k=r}^{1}\Phi_{{\mathcal U}_k} (\gamma_0(i_1,i_2)) \nonumber \\
\gamma_{02}^{(r)}  & = & \prod_{k=r}^{1}\Phi_{{\mathcal U}_k} (\gamma_0(i_1,i_2)\times \gamma_0(i_1,p_2)) \nonumber \\
\gamma_{k_1+1}^{(r)}  & = & \prod_{k=r}^{k_1+1}\Phi_{{\mathcal U}_k} \lp \prod_{k=k_1}^{1}\Phi_{{\mathcal U}_k}\gamma_0(i_1,i_2)\times \prod_{k=k_1}^{1} \Phi_{{\mathcal U}_k}\gamma_0(p_1,p_2) \rp.
 \end{eqnarray}
Also, let
\begin{eqnarray}\label{eq:thm3eq41}
\phi^{(k_1,\p)} & = &
  -(1-t)\lp \m_{k_1}-\m_{k_1+1}\rp \mE_{G,{\mathcal U}_{r+1}} \langle \|\x^{(i_1)}\|_2\|\x^{(p_1)}\|_2(\y^{(p_2)})^T\y^{(i_2)} \rangle_{\gamma_{k_1+1}^{(1)}} \nonumber \\
& &   - t \q_{k_1}
\lp \m_{k_1}-\m_{k_1+1}\rp  \mE_{G,{\mathcal U}_{r+1}} \langle\|\x^{(i_1)}\|_2\|\x^{(p_1)}\|_2\|\y^{(i_2)}\|_2\|\y^{(p_2)}\|_2\rangle_{\gamma_{k_1+1}^{(1)}}\nonumber \\
\phi^{(k_1,\q)} & = &
   -(1-t)  \lp \m_{k_1}-\m_{k_1+1}\rp  \mE_{G,{\mathcal U}_{r+1}} \langle \|\y^{(i_2)}\|_2\|\y^{(p_2)}\|_2(\x^{(i_1)})^T\x^{(p_1)}\rangle_{\gamma_{k_1+1}^{(1)}} \nonumber \\
& &   - t \p_{k_1}
\lp \m_{k_1}-\m_{k_1+1}\rp  \mE_{G,{\mathcal U}_{r+1}} \langle\|\x^{(i_1)}\|_2\|\x^{(p_1)}\|_2\|\y^{(i_2)}\|_2\|\y^{(p_2)}\|_2\rangle_{\gamma_{k_1+1}^{(1)}}.
\end{eqnarray}
Then for $k_1\in\{1,2,\dots,r\}$
\begin{eqnarray}\label{eq:thm3eq42}
\frac{d\psi(\calX,\calY,\p,\q,\m,\beta,s,t)}{d\p_{k_1}}  & = &       \frac{\mbox{sign}(s)s\beta}{2\sqrt{n}} \phi^{(k_1,\p)}\nonumber \\
\frac{d\psi(\calX,\calY,\p,\q,\m,\beta,s,t)}{d\q_{k_1}}  & = &       \frac{\mbox{sign}(s)s\beta}{2\sqrt{n}} \phi^{(k_1,\q)}.
 \end{eqnarray}
 \end{theorem}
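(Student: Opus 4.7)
The proof is essentially an assembly of the pieces already laid out in Section \ref{sec:rthlev}, so I would organize it around three main stages. First, I would invoke (\ref{eq:rthlev2genanal10e})--(\ref{eq:rthlev2genanal10g}) and their $\q$-analogues (\ref{eq:rthlev2genanal10e1})--(\ref{eq:rthlev2genanal10g1}) to express $d\psi/d\p_{k_1}$ and $d\psi/d\q_{k_1}$ as weighted differences $T_{k_1+1}^{\p}-T_{k_1}^{\p}$ and $T_{k_1+1}^{\q}-T_{k_1}^{\q}$ respectively, where each $T_k^{\cdot}$ decomposes into three independent contributions indexed by $(k,1,j)$, $(k,2)$, and $(k,3)$. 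This reduces the task to understanding three essentially analogous successor/predecessor relations in the sequences $(T_{k,1,j}^{\p})_{k=1:r+1}$, $(T_{k,2}^{\q})_{k=1:r+1}$, and $(T_{k,3}^{(\p,\q)})_{k=1:r+1}$.

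Second, I would concentrate on the family $(T_{k,1,j}^{\p})$ and perform Gaussian integration by parts in the variables $\u_j^{(2,k_1)}$ and $\u_j^{(2,k_1+1)}$ exactly as in (\ref{eq:rthlev2genanal11})--(\ref{eq:rthlev2genanal12}). This extracts the scale factors $\sqrt{\p_{k_1-1}-\p_{k_1}}$ and $\sqrt{\p_{k_1}-\p_{k_1+1}}$ and leaves behind the objects $T_{k_1,1,j}$ and $T_{k_1+1,1,j}=\Pi_{k_1+1,1,j}^{(1)}+\Pi_{k_1+1,1,j}^{(2)}$ already evaluated in \cite{Stojnicnflgscompyx23}. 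The identity (\ref{eq:rthlev2genanal19}) shows that $\Pi_{k_1+1,1,j}^{(1)}$ is precisely a rescaled copy of $T_{k_1,1,j}$, while (\ref{eq:rthlev2genanal26}) shows that $\Pi_{k_1+1,1,j}^{(2)}$ is a genuinely new reweighted average against the measure $\gamma_{k_1+1}^{(r)}$ with multiplicative factor $(\m_{k_1}-\m_{k_1+1})$. Consolidating these produces the scaled cancellation identity (\ref{eq:rthlev2genanal27}) for the first sequence, and the parallel derivations yield (\ref{eq:rthlev2genanal28}) and (\ref{eq:rthlev2genanal29}) for the other two.

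Third, I would substitute these three scaled-cancellation identities into the definitions of $T_{k_1+1}^{\p}-T_{k_1}^{\p}$ and $T_{k_1+1}^{\q}-T_{k_1}^{\q}$. The "scaled predecessor" piece produced on each successor's side exactly annihilates the original $T_{k_1}^{\cdot}$ contribution, and only the reweighted $\gamma_{k_1+1}^{(r)}$ averages survive. Grouping the $(1-t)$ term coming from the $\u^{(2,k)}$ ($b$-coefficient) part and the $t\q_{k_1}$ term coming from the $u^{(4,k)}$ ($a$-coefficient) part recovers exactly $\phi^{(k_1,\p)}$ in (\ref{eq:thm3eq41}). The $\q_{k_1}$-derivative computation is symmetric: the $b$-part of the $\p$-case is replaced by the $c$-part (giving $\langle \|\y\|\|\y\|(\x)^T\x\rangle_{\gamma_{k_1+1}^{(r)}}$) and the $a$-coefficient weight changes from $\q_{k_1}$ to $\p_{k_1}$, producing $\phi^{(k_1,\q)}$. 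The overall factor $\mbox{sign}(s)s\beta/(2\sqrt{n})$ in (\ref{eq:thm3eq42}) tracks through from (\ref{eq:rthlev2genanal10e})--(\ref{eq:rthlev2genanal10e1}) combined with the explicit $s$ that drops out of $\frac{d}{d\p_{k_1}}\sum_{i_1}(C^{(i_1)})^s$.

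The main technical obstacle, already confronted in Section \ref{sec:scaledcanout}, is propagating the integration by parts through the nested product $\prod_{v=r}^{k_1+1}\mE_{{\mathcal U}_v}\zeta_{v-1}^{\m_v/\m_{v-1}-1}$ and recognizing that differentiation in $\u_j^{(2,k_1+1)}$ (which sits inside $\zeta_{k_1}$ but not $\zeta_{k_1+1},\dots,\zeta_r$) is precisely what generates the coefficient $\m_{k_1}-\m_{k_1+1}$ attached to the $\gamma_{k_1+1}^{(r)}$ remainder. Once that bookkeeping is audited against the $r=1$ computation of Proposition \ref{thm:thm1}, the generalization is mechanical and the theorem follows.
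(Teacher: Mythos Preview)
Your proposal is correct and follows essentially the same route as the paper: the paper's proof is the one-line statement that the result follows by combining (\ref{eq:rthlev2genanal10e})--(\ref{eq:rthlev2genanal10f}), (\ref{eq:rthlev2genanal10e1})--(\ref{eq:rthlev2genanal10f1}), and (\ref{eq:rthlev2genanal27})--(\ref{eq:rthlev2genanal29}), and your three stages are precisely an expanded account of that combination, including the scaled-cancellation mechanism and the identification of the surviving $\gamma_{k_1+1}^{(r)}$ terms as $\phi^{(k_1,\p)}$ and $\phi^{(k_1,\q)}$.
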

\begin{proof}
  Follows trivially through a combination of (\ref{eq:rthlev2genanal10e})-(\ref{eq:rthlev2genanal10f}), (\ref{eq:rthlev2genanal10e1})-(\ref{eq:rthlev2genanal10f1}), and (\ref{eq:rthlev2genanal27})-(\ref{eq:rthlev2genanal29}). \end{proof}

\section{Stationarization along the interpolating path}
\label{sec:statalongpath}

We start by recalling on the following fundamental result from \cite{Stojnicnflgscompyx23}.
\begin{theorem}
\label{thm:thm4}
Assume the setup of Theorem \ref{thm:thm3}. Let also
\begin{eqnarray}\label{eq:thm3eq5}
 \phi_{k_1}^{(r)} & = &
-s(\m_{k_1-1}-\m_{k_1}) \nonumber \\
&  & \times
\mE_{G,{\mathcal U}_{r+1}} \langle (\p_{k_1-1}\|\x^{(i_1)}\|_2\|\x^{(p_1)}\|_2 -(\x^{(p_1)})^T\x^{(i_1)})(\q_{k_1-1}\|\y^{(i_2)}\|_2\|\y^{(p_2)}\|_2 -(\y^{(p_2)})^T\y^{(i_2)})\rangle_{\gamma_{k_1}^{(r)}} \nonumber \\
 \phi_{01}^{(r)} & = & (1-\p_0)(1-\q_0)\mE_{G,{\mathcal U}_{r+1}}\langle \|\x^{(i_1)}\|_2^2\|\y^{(i_2)}\|_2^2\rangle_{\gamma_{01}^{(r)}} \nonumber\\
\phi_{02}^{(r)} & = & (s-1)(1-\p_0)\mE_{G,{\mathcal U}_{r+1}}\left\langle \|\x^{(i_1)}\|_2^2 \lp\q_0\|\y^{(i_2)}\|_2\|\y^{(p_2)}\|_2-(\y^{(p_2)})^T\y^{(i_2)}\rp\right\rangle_{\gamma_{02}^{(r)}}. \end{eqnarray}
Then
\begin{eqnarray}\label{eq:thm3eq6}
\frac{d\psi(\calX,\calY,\p,\q,\m,\beta,s,t)}{dt}  & = &       \frac{\mbox{sign}(s)\beta}{2\sqrt{n}} \lp  \lp\sum_{k_1=1}^{r+1} \phi_{k_1}^{(r)}\rp +\phi_{01}^{(r)}+\phi_{02}^{(r)}\rp.
 \end{eqnarray}
It particular, choosing $\p_0=\q_0=1$, one also has
\begin{eqnarray}\label{eq:rthlev2genanal43}
\frac{d\psi(\calX,\calY,\p,\q,\m,\beta,s,t)}{dt}  & = &       \frac{\mbox{sign}(s)\beta}{2\sqrt{n}} \sum_{k_1=1}^{r+1} \phi_{k_1}^{(r)} .
 \end{eqnarray}
 \end{theorem}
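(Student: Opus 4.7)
The plan is to adapt the Gaussian integration by parts scheme of Section \ref{sec:rthlev}---which produced the $\p_{k_1}$- and $\q_{k_1}$-derivatives of Theorem \ref{thm:thm3}---to the time derivative $\frac{d\psi}{dt}$. First I would apply the chain rule exactly as in (\ref{eq:rthlev2genanal9}) to obtain a sum over $(i_1,i_2)$ of terms of the form $\beta_{i_1}A^{(i_1,i_2)}\frac{dD^{(i_1,i_2)}}{dt}$ weighted by the reciprocal cascade $\zeta_r^{-1}\prod_{v=r}^{2}\mE_{{\mathcal U}_v}\zeta_{v-1}^{\m_v/\m_{v-1}-1}\mE_{{\mathcal U}_1}Z^{\m_1-1}(C^{(i_1)})^{s-1}$. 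Unlike the $\p_{k_1}$- or $\q_{k_1}$-derivatives, $\frac{dD^{(i_1,i_2)}}{dt}$ contains \emph{all} Gaussian sources at once and decomposes into four groups: the bilinear $G$-piece $\frac{1}{2\sqrt{t}}(\y^{(i_2)})^T G\x^{(i_1)}$, the $\u^{(2,k)}$-pieces $-\frac{1}{2\sqrt{1-t}}\|\x^{(i_1)}\|_2(\y^{(i_2)})^T\sum_{k=1}^{r+1}b_k\u^{(2,k)}$, the $u^{(4,k)}$-pieces $\frac{1}{2\sqrt{t}}\|\x^{(i_1)}\|_2\|\y^{(i_2)}\|_2\sum_{k=1}^{r+1}a_ku^{(4,k)}$, and the $\h^{(k)}$-pieces $-\frac{1}{2\sqrt{1-t}}\|\y^{(i_2)}\|_2(\sum_{k=1}^{r+1}c_k\h^{(k)})^T\x^{(i_1)}$.

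For the last three groups, each summand with index $k$ is handled by Gaussian integration by parts in precisely the manner of Sections \ref{sec:handlT1}--\ref{sec:handlT2} and Section \ref{sec:scaledcanout}. This produces the characteristic telescoping: the IBP at level $k$ generates (i) a ``self'' term proportional to $\|\x^{(i_1)}\|_2\|\x^{(p_1)}\|_2\|\y^{(i_2)}\|_2\|\y^{(p_2)}\|_2$ (or its single-sum $\|\x\|_2^2\|\y\|_2^2$ analogue) weighted by the appropriate $(\m_{k-1}-\m_k)$ factor under $\gamma_k^{(r)}$, and (ii) cross terms at lower levels that cancel against those from adjacent $k$'s exactly as in (\ref{eq:rthlev2genanal27})--(\ref{eq:rthlev2genanal29}).

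The genuinely new ingredient is the $G$-piece. Using the covariance identity $\mE[((\y^{(i_2)})^T G\x^{(i_1)})((\y^{(p_2)})^T G\x^{(p_1)})]=((\x^{(p_1)})^T\x^{(i_1)})((\y^{(p_2)})^T\y^{(i_2)})$, multivariate Gaussian integration by parts contributes the bilinear cross term $(\x^{(p_1)})^T\x^{(i_1)}\,(\y^{(p_2)})^T\y^{(i_2)}$ at \emph{every} level of the nested $\Phi_{{\mathcal U}_k}$ measures. Using $\p_{k-1}-\p_k=b_k^2$, $\q_{k-1}-\q_k=c_k^2$, $\p_{k-1}\q_{k-1}-\p_k\q_k=a_k^2$, the ``self'' contribution from the $\u^{(2,k)}$, $\h^{(k)}$, $u^{(4,k)}$ IBPs at level $k_1$ combines with the $G$-piece IBP to yield precisely the factorized difference $(\p_{k_1-1}\|\x^{(i_1)}\|_2\|\x^{(p_1)}\|_2-(\x^{(p_1)})^T\x^{(i_1)})(\q_{k_1-1}\|\y^{(i_2)}\|_2\|\y^{(p_2)}\|_2-(\y^{(p_2)})^T\y^{(i_2)})$ integrated against $\gamma_{k_1}^{(r)}$, giving $\phi_{k_1}^{(r)}$. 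The leftover contributions from the top-level scales $\p_0,\q_0$ assemble into $\phi_{01}^{(r)}$ and $\phi_{02}^{(r)}$.

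The main obstacle is the combinatorial bookkeeping: the $G$-piece IBP simultaneously produces contributions at \emph{all} $r+1$ levels, while each of the $\u^{(2,k)}$, $\h^{(k)}$, $u^{(4,k)}$ pieces contributes only to its own level and the adjacent one through the scaled cancelation mechanism of Section \ref{sec:scaledcanout}. Verifying that the totality reorganizes cleanly into the product-of-differences form inside each $\phi_{k_1}^{(r)}$, rather than into a tangled multi-level sum, is where careful accounting of signs and $\m$-weights is essential. The special case $\p_0=\q_0=1$ is then immediate since $\phi_{01}^{(r)}$ carries a factor $(1-\p_0)(1-\q_0)=0$ and $\phi_{02}^{(r)}$ a factor $(1-\p_0)=0$.
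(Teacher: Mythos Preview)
Your proposal is correct and follows the natural route: it mirrors the $\p_{k_1}$- and $\q_{k_1}$-derivative computations of Section~\ref{sec:rthlev} and Section~\ref{sec:scaledcanout}, now applied to $\frac{dD^{(i_1,i_2)}}{dt}$, with the additional $G$-piece handled via the bilinear covariance identity and combined with the decoupled pieces through the telescoping mechanism. This is precisely the approach of the companion paper \cite{Stojnicnflgscompyx23}, to which the present paper simply defers for the proof of this theorem; the machinery you invoke is exactly the one developed there and partially reproduced here for the $\p,\q$-derivatives.
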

\begin{proof}
Presented in \cite{Stojnicnflgscompyx23}.
\end{proof}

Consider a new function $\psi_1$
\begin{eqnarray}\label{eq:saip1}
\psi_1(\calX,\calY,\p,\q,\m,\beta,s,t) & = & -\frac{\mbox{sign}(s) s \beta}{2\sqrt{n}} \nonumber \\
& & \times \sum_{k=1}^{r+1}\Bigg(\Bigg. \p_{k-1}\q_{k-1}\mE_{G,{\mathcal U}_{r+1}} \langle\|\x^{(i_1)}\|_2\|\x^{(p_1)}\|_2\|\y^{(i_2)}\|_2\|\y^{(p_2)}\|_2\rangle_{\gamma_{k}^{(r)}}\nonumber \\
& & -\p_{k}\q_{k}\mE_{G,{\mathcal U}_{r+1}} \langle\|\x^{(i_1)}\|_2\|\x^{(p_1)}\|_2\|\y^{(i_2)}\|_2\|\y^{(p_2)}\|_2\rangle_{\gamma_{k+1}^{(r)}}\Bigg.\Bigg)
\m_{k} \nonumber \\
& & +\psi(\calX,\calY,\p,\q,\m,\beta,s,t).
 \end{eqnarray}
 Then we also have
\begin{eqnarray}\label{eq:saip2}
\frac{d\psi_1(\calX,\calY,\p,\q,\m,\beta,s,t)}{d\p_{k_1}}
& = &
\frac{\mbox{sign}(s)s\beta}{2\sqrt{n}}\q_{k_1}\lp \m_{k_1} -\m_{k_1+1}\rp\mE_{G,{\mathcal U}_{r+1}} \langle\|\x^{(i_1)}\|_2\|\x^{(p_1)}\|_2\|\y^{(i_2)}\|_2\|\y^{(p_2)}\|_2\rangle_{\gamma_{k_1+1}^{(r)}} \nonumber \\
& & +\frac{\mbox{sign}(s)s\beta}{2\sqrt{n}} \phi^{(k_1,\p)} \nonumber \\
& = &
\frac{\mbox{sign}(s)s\beta}{2\sqrt{n}} \Bigg(\Bigg. \q_{k_1}\lp \m_{k_1} -\m_{k_1+1}\rp \mE_{G,{\mathcal U}_{r+1}} \langle\|\x^{(i_1)}\|_2\|\x^{(p_1)}\|_2\|\y^{(i_2)}\|_2\|\y^{(p_2)}\|_2\rangle_{\gamma_{k_1+1}^{(r)}}\nonumber \\
& &    -(1-t)\lp \m_{k_1}-\m_{k_1+1}\rp \mE_{G,{\mathcal U}_{r+1}} \langle \|\x^{(i_1)}\|_2\|\x^{(p_1)}\|_2(\y^{(p_2)})^T\y^{(i_2)} \rangle_{\gamma_{k_1+1}^{(1)}} \nonumber \\
& &   - t\q_{k_1}
\lp \m_{k_1}-\m_{k_1+1}\rp  \mE_{G,{\mathcal U}_{r+1}} \langle\|\x^{(i_1)}\|_2\|\x^{(p_1)}\|_2\|\y^{(i_2)}\|_2\|\y^{(p_2)}\|_2\rangle_{\gamma_{k_1+1}^{(r)}} \Bigg.\Bigg)\nonumber \\
& = &
(1-t)\lp \m_{k_1}-\m_{k_1+1}\rp\frac{\mbox{sign}(s)s\beta}{2\sqrt{n}} \nonumber \\
& & \times \Bigg(\Bigg.   \mE_{G,{\mathcal U}_{r+1}} \langle\|\x^{(i_1)}\|_2\|\x^{(p_1)}\|_2 \lp \q_{k_1} \|\y^{(i_2)}\|_2\|\y^{(p_2)}\|_2 -(\y^{(p_2)})^T\y^{(i_2)}\rp\rangle_{\gamma_{k_1+1}^{(r)}} \Bigg.\Bigg),\nonumber \\
 \end{eqnarray}
and
\begin{eqnarray}\label{eq:saip3}
\frac{d\psi_1(\calX,\calY,\p,\q,\m,\beta,s,t)}{d\q_{k_1}}
& = &
\frac{\mbox{sign}(s)s\beta}{2\sqrt{n}}\p_{k_1}\lp \m_{k_1} -\m_{k_1+1}\rp\mE_{G,{\mathcal U}_{r+1}} \langle\|\x^{(i_1)}\|_2\|\x^{(p_1)}\|_2\|\y^{(i_2)}\|_2\|\y^{(p_2)}\|_2\rangle_{\gamma_{k_1+1}^{(r)}} \nonumber \\
& & +\frac{\mbox{sign}(s)s\beta}{2\sqrt{n}} \phi^{(k_1,\q)} \nonumber \\
& = &
\frac{\mbox{sign}(s)s\beta}{2\sqrt{n}} \Bigg(\Bigg. \p_{k_1}\lp \m_{k_1} -\m_{k_1+1}\rp \mE_{G,{\mathcal U}_{r+1}} \langle\|\x^{(i_1)}\|_2\|\x^{(p_1)}\|_2\|\y^{(i_2)}\|_2\|\y^{(p_2)}\|_2\rangle_{\gamma_{k_1+1}^{(r)}}\nonumber \\
& &    -(1-t)\lp \m_{k_1}-\m_{k_1+1}\rp \mE_{G,{\mathcal U}_{r+1}} \langle \|\y^{(i_2)}\|_2\|\y^{(p_2)}\|_2(\x^{(p_1)})^T\x^{(i_1)} \rangle_{\gamma_{k_1+1}^{(1)}} \nonumber \\
& &   - t\p_{k_1}
\lp \m_{k_1}-\m_{k_1+1}\rp  \mE_{G,{\mathcal U}_{r+1}} \langle\|\x^{(i_1)}\|_2\|\x^{(p_1)}\|_2\|\y^{(i_2)}\|_2\|\y^{(p_2)}\|_2\rangle_{\gamma_{k_1+1}^{(r)}} \Bigg.\Bigg)\nonumber \\
& = &
(1-t)\lp \m_{k_1}-\m_{k_1+1}\rp\frac{\mbox{sign}(s)s\beta}{2\sqrt{n}} \nonumber \\
& & \times \Bigg(\Bigg.   \mE_{G,{\mathcal U}_{r+1}} \langle\|\y^{(i_2)}\|_2\|\y^{(p_2)}\|_2
\lp \p_{k_1} \|\x^{(i_1)}\|_2\|\x^{(p_1)}\|_2 -(\x^{(p_1)})^T\x^{(i_1)}\rp\rangle_{\gamma_{k_1+1}^{(r)}} \Bigg.\Bigg).\nonumber \\
 \end{eqnarray}
The above, of course, holds for any $t$. Below we focus on vectors $\p(t)$, $\q(t)$, and $\m(t)$ as functions of $t$ and assume that they satisfy the following system of equations
\begin{eqnarray}\label{eq:saip4}
\frac{d\psi_1(\calX,\calY,\p,\q,\m,\beta,s,t)}{d\p_{k_1}}
& = &
(1-t)\lp \m_{k_1}-\m_{k_1+1}\rp\frac{\mbox{sign}(s)s\beta}{2\sqrt{n}} \nonumber \\
& & \times \Bigg(\Bigg.   \mE_{G,{\mathcal U}_{r+1}} \langle\|\x^{(i_1)}\|_2\|\x^{(p_1)}\|_2 \lp \q_{k_1} \|\y^{(i_2)}\|_2\|\y^{(p_2)}\|_2 -(\y^{(p_2)})^T\y^{(i_2)}\rp\rangle_{\gamma_{k_1+1}^{(r)}} \Bigg.\Bigg)\nonumber \\
& = & 0, \nonumber \\
\frac{d\psi_1(\calX,\calY,\p,\q,\m,\beta,s,t)}{d\q_{k_1}}
& = &
(1-t)\lp \m_{k_1}-\m_{k_1+1}\rp\frac{\mbox{sign}(s)s\beta}{2\sqrt{n}} \nonumber \\
& & \times \Bigg(\Bigg.   \mE_{G,{\mathcal U}_{r+1}} \langle\|\y^{(i_2)}\|_2\|\y^{(p_2)}\|_2
\lp \p_{k_1} \|\x^{(i_1)}\|_2\|\x^{(p_1)}\|_2 -(\x^{(p_1)})^T\x^{(i_1)}\rp\rangle_{\gamma_{k_1+1}^{(r)}} \Bigg.\Bigg) \nonumber \\
 & = & 0, \nonumber \\
\frac{d\psi_1(\calX,\calY,\p,\q,\m,\beta,s,t)}{d\m_{k_1}}
 & = & 0,
  \end{eqnarray}
where $k_1\in\{1,2,\dots,r\}$ and $\p_0(t)=\q_0(t)=\m_0(t)=1$. We then have the following theorem.

\begin{theorem}
\label{thm:thm5}
Consider the following completely stationirized  fully lifted random duality theory frame (\textbf{\emph{complete sfl RDT frame}}). Assume the existence of the solutions $\bar{\p}(t)$, $\bar{\q}(t)$, and $\bar{\m}(t)$ of system (\ref{eq:saip4}) and that $\bar{\p}(t)$ and $\bar{\q}(t)$ are not only the (appropriately scaled/normalized) expected values (in the sense of (\ref{eq:saip4})) of the overlaps $(\x^{(p_1)})^T\x^{(i_1)}$ and $(\y^{(p_2)})^T\y^{(i_2)}$ but also their concentrating points (or such that $\phi_{k_1+1}^{(r)}=0$). For $\bar{\p}_0(t)=\bar{\q}_0(t)=1$,
$\bar{\p}_{r+1}(t)=\bar{\q}_{r+1}(t)=\bar{\m}_{r+1}(t)=0$, and $\m_1(t)\rightarrow\m_0(t)=1$ one then has that
$\frac{d\psi_1(\calX,\calY,\bar{\p}(t),\bar{\q}(t),\bar{\m}(t),\beta,s,t)}{dt}   =   0$ and
\begin{eqnarray}\label{eq:thm5eq1}
\lim_{n\rightarrow\infty}\psi_1(\calX,\calY,\bar{\p}(t),\bar{\q}(t),\bar{\m}(t),\beta,s,t)
& = &
\lim_{n\rightarrow\infty}\psi_1(\calX,\calY,\bar{\p}(0),\bar{\q}(0),\bar{\m}(0),\beta,s,0) \nonumber \\
& = & \lim_{n\rightarrow\infty} \psi_1(\calX,\calY,\bar{\p}(1),\bar{\q}(1),\bar{\m}(1),\beta,s,1),
\end{eqnarray}
where $\psi_1(\cdot)$ and $\psi(\cdot)$ are as in (\ref{eq:saip1}) and  (\ref{eq:thm3eq1}), respectively.
\end{theorem}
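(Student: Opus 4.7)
The goal is to establish $F'(t)\equiv 0$ for $F(t):=\psi_1(\calX,\calY,\bar{\p}(t),\bar{\q}(t),\bar{\m}(t),\beta,s,t)$ along the stationary trajectory; the two equalities in (\ref{eq:thm5eq1}) then follow by the fundamental theorem of calculus applied to $F$ on $[0,t]$ and $[t,1]$. I plan to proceed in two steps: a chain-rule reduction to the explicit $t$-partial, followed by a vanishing argument for that partial under the stated concentration and boundary hypotheses.

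First, apply the multivariate chain rule,
\begin{equation*}
F'(t) \;=\; \left.\frac{\partial\psi_1}{\partial t}\right|_{\bar{\p},\bar{\q},\bar{\m}}
+\sum_{k_1=1}^{r}\left[\frac{d\psi_1}{d\p_{k_1}}\dot{\bar{\p}}_{k_1} + \frac{d\psi_1}{d\q_{k_1}}\dot{\bar{\q}}_{k_1} + \frac{d\psi_1}{d\m_{k_1}}\dot{\bar{\m}}_{k_1}\right]_{\bar{\p},\bar{\q},\bar{\m}},
\end{equation*}
and observe that the stationarity system (\ref{eq:saip4}) forces every bracketed derivative to vanish identically along the path, collapsing the sum. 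Consequently $F'(t)$ reduces to $\partial_t\psi_1$ evaluated at $(\bar{\p}(t),\bar{\q}(t),\bar{\m}(t))$, and the whole problem becomes that of showing this explicit partial vanishes under the hypotheses.

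Second, decompose $\psi_1 = \psi - \Delta$, where $\Delta$ is the correction sum subtracted in (\ref{eq:saip1}); then $\partial_t\psi_1 = \partial_t\psi - \partial_t\Delta$. Theorem~\ref{thm:thm4} identifies $\partial_t\psi = \tfrac{\mbox{sign}(s)\beta}{2\sqrt{n}}\sum_{k_1=1}^{r+1}\phi_{k_1}^{(r)}$ once $\p_0=\q_0=1$ is enforced. The concentration hypothesis (equivalently the $\phi_{k_1+1}^{(r)}=0$ clause) makes each difference factor
\begin{equation*}
\bar{\p}_{k_1-1}\|\x^{(i_1)}\|_2\|\x^{(p_1)}\|_2 - (\x^{(p_1)})^T\x^{(i_1)}, \qquad \bar{\q}_{k_1-1}\|\y^{(i_2)}\|_2\|\y^{(p_2)}\|_2 - (\y^{(p_2)})^T\y^{(i_2)}
\end{equation*}
concentrate at zero, killing $\phi_{k_1}^{(r)}$ for $k_1\geq 2$; the prescribed limit $\m_1(t)\to\m_0=1$ annihilates the $(\m_0-\m_1)$ prefactor at $k_1=1$. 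Therefore $\partial_t\psi\to 0$ on the path.

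The remaining step, which I expect to be the main obstacle, is showing $\partial_t\Delta\to 0$. The $t$-dependence of $\Delta$ enters only through the quartic norm averages $M_k^{(r)}:=\mE\langle\|\x^{(i_1)}\|_2\|\x^{(p_1)}\|_2\|\y^{(i_2)}\|_2\|\y^{(p_2)}\|_2\rangle_{\gamma_k^{(r)}}$, via the $t$-dependence of the $\gamma_k^{(r)}$-measures. Repeating the Gaussian integration-by-parts computation of Sections~\ref{sec:handlT1}--\ref{sec:handlT2}, but now applied to the quartic norm observable in place of $\log Z$, yields an expansion of $\partial_t M_k^{(r)}$ in which every surviving term factors through one of the difference expressions displayed above, so concentration forces them to vanish; meanwhile the boundary values $\p_{r+1}=\q_{r+1}=\m_{r+1}=0$, $\p_0=\q_0=1$, and $\m_1\to\m_0=1$ annihilate the telescoping endpoint contributions in the sum defining $\Delta$. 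Combining with the previous paragraph gives $\partial_t\psi_1 \to 0$ on the path, hence $F'(t)\equiv 0$ in the $n\to\infty$ limit, and (\ref{eq:thm5eq1}) follows by integration.
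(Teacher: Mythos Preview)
Your overall route—chain rule plus stationarity to reduce $F'(t)$ to $\partial_t\psi_1$, then Theorem~\ref{thm:thm4} together with $\phi_{k_1}^{(r)}\to 0$—is exactly the paper's. Two points of divergence are worth flagging.

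First, the bulk of the paper's argument is spent on what you compress into one sentence. Starting from (\ref{eq:thm3eq5}), the paper substitutes the two stationarity relations in (\ref{eq:saip4}) in turn to rewrite $\phi_{k_1+1}^{(r)}$ as the expectation of a single product, and then applies a Cauchy--Schwarz covariance bound to deduce $\phi_{k_1+1}^{(r)}\to 0$ from the overlap-concentration hypothesis. Your phrasing (``makes each difference factor \ldots\ concentrate at zero, killing $\phi_{k_1}^{(r)}$'') is morally right but skips this mechanism; concentration of each factor separately does not by itself control the expectation of their product, which is precisely why the paper inserts the Cauchy--Schwarz step.

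Second, on $\partial_t\Delta$: the paper does not argue this separately. It simply writes $\partial_t\psi_1=\partial_t\psi$ in (\ref{eq:saip7}), consistently with how the $\p$- and $\q$-derivatives of $\psi_1$ are set up in (\ref{eq:saip2})--(\ref{eq:saip3}), where the norm-averages $M_k^{(r)}$ in the correction term are effectively treated as frozen coefficients rather than differentiated through. Your attempt to control $\partial_t M_k^{(r)}$ by a parallel integration-by-parts is a reasonable extra layer of care, but the sketch you give is not complete: the observable in $M_k^{(r)}$ is a product of norms, not overlaps, and it is not evident that every term in $\partial_t M_k^{(r)}$ factors through an overlap-difference that concentration annihilates. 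The paper sidesteps this entirely; in particular the issue disappears in the unit-norm setting of Corollary~\ref{thm:thm6}, where each $M_k^{(r)}\equiv 1$.
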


\begin{proof}
  Recalling on (\ref{eq:thm3eq5}), we have
\begin{eqnarray}\label{eq:saip4a}
 \phi_{k_1+1}^{(r)}
 & = &
-s(\bar{\m}_{k_1}(t)-\bar{\m}_{k_1+1}(t)) \nonumber \\
&  & \times
\mE_{G,{\mathcal U}_{r+1}} \langle (\bar{\p}_{k_1}(t)\|\x^{(i_1)}\|_2\|\x^{(p_1)}\|_2 -(\x^{(p_1)})^T\x^{(i_1)})(\bar{\q}_{k_1}(t)\|\y^{(i_2)}\|_2\|\y^{(p_2)}\|_2 -(\y^{(p_2)})^T\y^{(i_2)})\rangle_{\gamma_{k_1+1}^{(r)}} \nonumber \\
& = &
-s(\bar{\m}_{k_1}(t)-\bar{\m}_{k_1+1}(t)) \nonumber \\
&  & \times
\mE_{G,{\mathcal U}_{r+1}} \langle (\bar{\p}_{k_1}(t)\|\x^{(i_1)}\|_2\|\x^{(p_1)}\|_2 -(\x^{(p_1)})^T\x^{(i_1)})\bar{\q}_{k_1}(t)\|\y^{(i_2)}\|_2\|\y^{(p_2)}\|_2 \rangle_{\gamma_{k_1+1}^{(r)}} \nonumber \\
&  &
+s(\bar{\m}_{k_1}(t)-\bar{\m}_{k_1+1}(t)) \nonumber \\
&  & \times
\mE_{G,{\mathcal U}_{r+1}} \langle (\bar{\p}_{k_1}(t)\|\x^{(i_1)}\|_2\|\x^{(p_1)}\|_2 -(\x^{(p_1)})^T\x^{(i_1)})(\y^{(p_2)})^T\y^{(i_2)}\rangle_{\gamma_{k_1+1}^{(r)}}.
\end{eqnarray}
Utilizing the second part of (\ref{eq:saip4}), we easily have
\begin{eqnarray}\label{eq:saip5}
 \phi_{k_1+1}^{(r)}
 & = &
  s(\bar{\m}_{k_1}(t)-\bar{\m}_{k_1+1}(t)) \nonumber \\
&  & \times
\mE_{G,{\mathcal U}_{r+1}} \langle (\bar{\p}_{k_1}(t)\|\x^{(i_1)}\|_2\|\x^{(p_1)}\|_2 -(\x^{(p_1)})^T\x^{(i_1)})(\y^{(p_2)})^T\y^{(i_2)}\rangle_{\gamma_{k_1+1}^{(r)}} \nonumber \\
 & = &
  s(\bar{\m}_{k_1}(t)-\bar{\m}_{k_1+1}(t)) \nonumber \\
&  & \times
\mE_{G,{\mathcal U}_{r+1}} \langle ( -(\x^{(p_1)})^T\x^{(i_1)})(\y^{(p_2)})^T\y^{(i_2)}\rangle_{\gamma_{k_1+1}^{(r)}} \nonumber \\
 &  &
  +s(\bar{\m}_{k_1}(t)-\bar{\m}_{k_1+1}(t)) \nonumber \\
&  & \times
\mE_{G,{\mathcal U}_{r+1}} \langle (\bar{\p}_{k_1}(t)\|\x^{(i_1)}\|_2\|\x^{(p_1)}\|_2)(\y^{(p_2)})^T\y^{(i_2)}\rangle_{\gamma_{k_1+1}^{(r)}} \nonumber \\
 & = &
  s(\bar{\m}_{k_1}(t)-\bar{\m}_{k_1+1}(t)) \nonumber \\
&  & \times
\mE_{G,{\mathcal U}_{r+1}} \langle ( -(\x^{(p_1)})^T\x^{(i_1)})(\y^{(p_2)})^T\y^{(i_2)}\rangle_{\gamma_{k_1+1}^{(r)}} \nonumber \\
 &  &
  +s(\bar{\m}_{k_1}(t)-\bar{\m}_{k_1+1}(t)) \nonumber \\
&  & \times
\mE_{G,{\mathcal U}_{r+1}} \langle \bar{\p}_{k_1}(t)\|\x^{(i_1)}\|_2\|\x^{(p_1)}\|_2
\bar{\q}_{k_1}(t) \|\y^{(p_2)}\|_2\|\y^{(i_2)}\|_2\rangle_{\gamma_{k_1+1}^{(r)}} \nonumber \\
 &  &
  -s(\bar{\m}_{k_1}(t)-\bar{\m}_{k_1+1}(t)) \nonumber \\
&  & \times
\mE_{G,{\mathcal U}_{r+1}} \langle \bar{\p}_{k_1}(t)\|\x^{(i_1)}\|_2\|\x^{(p_1)}\|_2
\bar{\q}_{k_1}(t) \|\y^{(p_2)}\|_2\|\y^{(i_2)}\|_2\rangle_{\gamma_{k_1+1}^{(r)}} \nonumber \\
 &  &
  +s(\bar{\m}_{k_1}(t)-\bar{\m}_{k_1+1}(t)) \nonumber \\
&  & \times
\mE_{G,{\mathcal U}_{r+1}} \langle (\bar{\p}_{k_1}(t)\|\x^{(i_1)}\|_2\|\x^{(p_1)}\|_2)(\y^{(p_2)})^T\y^{(i_2)}\rangle_{\gamma_{k_1+1}^{(r)}} \nonumber \\
 & = &
  s(\bar{\m}_{k_1}(t)-\bar{\m}_{k_1+1}(t)) \nonumber \\
&  & \times
\mE_{G,{\mathcal U}_{r+1}} \langle
\bar{\p}_{k_1}(t)\|\x^{(i_1)}\|_2\|\x^{(p_1)}\|_2
\bar{\q}_{k_1}(t) \|\y^{(p_2)}\|_2\|\y^{(i_2)}\|_2
 -(\x^{(p_1)})^T\x^{(i_1)}(\y^{(p_2)})^T\y^{(i_2)}\rangle_{\gamma_{k_1+1}^{(r)}} \nonumber \\
  &  &
  -s(\bar{\m}_{k_1}(t)-\bar{\m}_{k_1+1}(t)) \nonumber \\
&  & \times
\mE_{G,{\mathcal U}_{r+1}} \langle \bar{\p}_{k_1}(t)\|\x^{(i_1)}\|_2\|\x^{(p_1)}\|_2
\lp\bar{\q}_{k_1}(t) \|\y^{(p_2)}\|_2\|\y^{(i_2)}\|_2
- (\y^{(p_2)})^T\y^{(i_2)}  \rp  \rangle_{\gamma_{k_1+1}^{(r)}}.
 \end{eqnarray}
Utilizing the first part of (\ref{eq:saip4}), we now easily also have
\begin{eqnarray}\label{eq:saip6}
 \phi_{k_1+1}^{(r)}
  & = &
  s(\bar{\m}_{k_1}(t)-\bar{\m}_{k_1+1}(t)) \nonumber \\
&  & \times
\mE_{G,{\mathcal U}_{r+1}} \langle
\bar{\p}_{k_1}(t)\|\x^{(i_1)}\|_2\|\x^{(p_1)}\|_2
\bar{\q}_{k_1}(t) \|\y^{(p_2)}\|_2\|\y^{(i_2)}\|_2
 -(\x^{(p_1)})^T\x^{(i_1)}(\y^{(p_2)})^T\y^{(i_2)}\rangle_{\gamma_{k_1+1}^{(r)}}.   \nonumber \\
 \end{eqnarray}
By the Cauchy-Schwartz inequality one has $|\mbox{Cov}(a,b)|\leq \sqrt{\mbox{Var}(a)}\sqrt{\mbox{Var}(b)}$. Combining this with the scaled overlaps concentrations gives for $n\rightarrow\infty$
\begin{eqnarray}\label{eq:saip6a00}
a_{\phi} & \triangleq &\mE_{G,{\mathcal U}_{r+1}} \bigg | \bigg. \langle (\bar{\p}_{k_1}(t)\|\x^{(i_1)}\|_2\|\x^{(p_1)}\|_2 -(\x^{(p_1)})^T\x^{(i_1)})(\bar{\q}_{k_1}(t)\|\y^{(i_2)}\|_2\|\y^{(p_2)}\|_2 -(\y^{(p_2)})^T\y^{(i_2)})\rangle_{\gamma_{k_1+1}^{(r)}} \nonumber \\
& &
-  \langle \bar{\p}_{k_1}(t)\|\x^{(i_1)}\|_2\|\x^{(p_1)}\|_2 -(\x^{(p_1)})^T\x^{(i_1)} \rangle_{\gamma_{k_1+1}^{(r)}} \langle (\bar{\q}_{k_1}(t)\|\y^{(i_2)}\|_2\|\y^{(p_2)}\|_2 -(\y^{(p_2)})^T\y^{(i_2)} \rangle_{\gamma_{k_1+1}^{(r)}} \bigg. \bigg|^2\nonumber \\
& \leq &\mE_{G,{\mathcal U}_{r+1}} \Bigg | \Bigg.
\Bigg( \Bigg.  \langle (\bar{\p}_{k_1}(t)\|\x^{(i_1)}\|_2\|\x^{(p_1)}\|_2 -(\x^{(p_1)})^T\x^{(i_1)})^2\rangle_{\gamma_{k_1+1}^{(r)}} \nonumber \\
& &
- \lp\langle (\bar{\p}_{k_1}(t)\|\x^{(i_1)}\|_2\|\x^{(p_1)}\|_2 -(\x^{(p_1)})^T\x^{(i_1)})\rangle_{\gamma_{k_1+1}^{(r)}}\rp^2 \Bigg.\Bigg)\nonumber \\
& & \times
\Bigg( \Bigg. \bigg. \langle (\bar{\q}_{k_1}(t)\|\y^{(i_2)}\|_2\|\y^{(p_2)}\|_2 -(\y^{(p_2)})^T\y^{(i_2)})^2\rangle_{\gamma_{k_1+1}^{(r)}} \nonumber \\
& &
- \lp\langle (\bar{\q}_{k_1}(t)\|\y^{(i_2)}\|_2\|\y^{(p_2)}\|_2 -(\y^{(p_2)})^T\y^{(i_2)})\rangle_{\gamma_{k_1+1}^{(r)}}\rp^2 \Bigg.\Bigg)
 \Bigg. \Bigg|\rightarrow 0.
 \end{eqnarray}
Since the scaled overlaps concentrations also ensure
\begin{eqnarray}\label{eq:saip6a01}
 \mE_{G,{\mathcal U}_{r+1}}
 \langle \bar{\p}_{k_1}(t)\|\x^{(i_1)}\|_2\|\x^{(p_1)}\|_2 -(\x^{(p_1)})^T\x^{(i_1)} \rangle_{\gamma_{k_1+1}^{(r)}} \langle (\bar{\q}_{k_1}(t)\|\y^{(i_2)}\|_2\|\y^{(p_2)}\|_2 -(\y^{(p_2)})^T\y^{(i_2)} \rangle_{\gamma_{k_1+1}^{(r)}} \rightarrow 0,
 \end{eqnarray}
one then has that $a_{\phi}\rightarrow 0$ also implies
\begin{eqnarray}\label{eq:saip6a02}
 \mE_{G,{\mathcal U}_{r+1}}   \langle (\bar{\p}_{k_1}(t)\|\x^{(i_1)}\|_2\|\x^{(p_1)}\|_2 -(\x^{(p_1)})^T\x^{(i_1)})(\bar{\q}_{k_1}(t)\|\y^{(i_2)}\|_2\|\y^{(p_2)}\|_2 -(\y^{(p_2)})^T\y^{(i_2)})\rangle_{\gamma_{k_1+1}^{(r)}} \rightarrow 0.
 \end{eqnarray}
Combining (\ref{eq:saip6a02}) with (\ref{eq:saip6}) produces a, possibly more clearly spelled out, overlaps-products (correlations) concentrations,
$\phi_{k_1+1}^{(r)} \rightarrow 0$, i.e.,
\begin{eqnarray}\label{eq:saip6a}
 \phi_{k_1+1}^{(r)}
  & = &
  s(\bar{\m}_{k_1}(t)-\bar{\m}_{k_1+1}(t)) \nonumber \\
&  & \times
\mE_{G,{\mathcal U}_{r+1}} \langle
\bar{\p}_{k_1}(t)\|\x^{(i_1)}\|_2\|\x^{(p_1)}\|_2
\bar{\q}_{k_1}(t) \|\y^{(p_2)}\|_2\|\y^{(i_2)}\|_2
 -(\x^{(p_1)})^T\x^{(i_1)}(\y^{(p_2)})^T\y^{(i_2)}\rangle_{\gamma_{k_1+1}^{(r)}}   \nonumber \\
& \rightarrow & 0,
\end{eqnarray}
and based on (\ref{eq:rthlev2genanal43})
\begin{eqnarray}\label{eq:saip7}
\frac{d\psi_1(\calX,\calY,\bar{\p}(t),\bar{\q}(t),\bar{\m}(t),\beta,s,t)}{dt}
& = &
\frac{\partial\psi(_1\calX,\calY,\bar{\p}(t),\bar{\q}(t),\bar{\m}(t),\beta,s,t)}{\partial t}\frac{dt}{dt} \nonumber \\
& &
+\frac{\partial \psi_1(\calX,\calY,\bar{\p}(t),\bar{\q}(t),\bar{\m}(t),\beta,s,t)}{\partial \bar{\p}(t)}\frac{d\bar{\p}(t)}{dt} \nonumber \\
& &
+\frac{\partial \psi_1(\calX,\calY,\bar{\p}(t),\bar{\q}(t),\bar{\m}(t),\beta,s,t)}{\partial \bar{\q}(t)}\frac{d\bar{\q}(t)}{dt} \nonumber \\
& &
+\frac{\partial \psi_1(\calX,\calY,\bar{\p}(t),\bar{\q}(t),\bar{\m}(t),\beta,s,t)}{\partial \bar{\m}(t)}\frac{d\bar{\m}(t)}{dt} \nonumber \\
& = &
\frac{\partial\psi_1(\calX,\calY,\bar{\p}(t),\bar{\q}(t),\bar{\m}(t),\beta,s,t)}{\partial t}
 \nonumber \\
& = &
\frac{\partial\psi(\calX,\calY,\bar{\p}(t),\bar{\q}(t),\bar{\m}(t),\beta,s,t)}{\partial t}
 \nonumber \\
& = &       \frac{\mbox{sign}(s)\beta}{2\sqrt{n}} \sum_{k_1=1}^{r+1} \phi_{k_1}^{(r)}\rightarrow 0,
 \end{eqnarray}
where the second equality follows by the choice of $\bar{\p}(t)$, $\bar{\q}(t)$, and $\bar{\m}(t)$ (i.e., by them being the stationary points of $\psi_1(\cdot)$).
\end{proof}

We also immediately have the following, practically very relevant, corollary
\begin{corollary}
\label{thm:thm6}
Assume the setup of Theorem \ref{thm:thm5}. Let ${\mathcal X}$ and ${\mathcal Y}$ be such that $\|\x\|_2=\|\y\|_2=1$ (or, alternatively, such that their concentrating values in the sfl RDT frame sense are $1$). Then
 \begin{eqnarray}\label{eq:thm6eq0}
\lim_{n\rightarrow\infty}\psi_1(\calX,\calY,\bar{\p}(t),\bar{\q}(t),\bar{\m}(t),\beta,s,t)
& = &
\lim_{n\rightarrow\infty}\psi_1(\calX,\calY,\bar{\p}(0),\bar{\q}(0),\bar{\m}(0),\beta,s,0) \nonumber \\
& = & \lim_{n\rightarrow\infty} \psi_1(\calX,\calY,\bar{\p}(1),\bar{\q}(1),\bar{\m}(1),\beta,s,1),
\end{eqnarray}
 and
\begin{eqnarray}\label{eq:thm6eq1}
\psi_1(\calX,\calY,\bar{\p}(0),\bar{\q}(0),\bar{\m}(0),\beta,s,0) & = & -\frac{\mbox{sign}(s) s \beta}{2\sqrt{n}} \sum_{k=1}^{r+1}\Bigg(\Bigg. \bar{\p}_{k-1}(0)\bar{\q}_{k-1}(0)  -\bar{\p}_{k}(0)\bar{\q}_{k}(0)   \Bigg.\Bigg)
\bar{\m}_k(0) \nonumber \\
& &  +\psi(\calX,\calY,\bar{\p}(0),\bar{\q}(0),\bar{\m}(0),\beta,s,0) \nonumber \\
\psi_1(\calX,\calY,\bar{\p}(1),\bar{\q}(1),\bar{\m}(1),\beta,s,1) & = & -\frac{\mbox{sign}(s) s \beta}{2\sqrt{n}} \sum_{k=1}^{r+1}\Bigg(\Bigg. \bar{\p}_{k-1}(1)\bar{\q}_{k-1}(1)  -\bar{\p}_{k}(1)\bar{\q}_{k}(1)   \Bigg.\Bigg)
\bar{\m}_k(1)  \nonumber \\
& &  +\psi(\calX,\calY,\bar{\p}(1),\bar{\q}(1),\bar{\m}(1),\beta,s,1).
 \end{eqnarray}
 Moreover, let
 \begin{equation}\label{eq:thm6eq2}
\psi_S(\calX,\calY,\p,\q,\m,\beta,s,t)  =  \mE_{G,{\mathcal U}_{r+1}} \frac{1}{\beta|s|\sqrt{n}\m_r} \log
\lp \mE_{{\mathcal U}_{r}} \lp \dots \lp \mE_{{\mathcal U}_2}\lp\lp\mE_{{\mathcal U}_1} \lp Z_S^{\m_1}\rp\rp^{\frac{\m_2}{\m_1}}\rp\rp^{\frac{\m_3}{\m_2}} \dots \rp^{\frac{\m_{r}}{\m_{r-1}}}\rp,
\end{equation}
where,  analogously to (\ref{eq:thm3eq1}) and (\ref{eq:thm3eq2}),
\begin{equation}\label{eq:thm6eq3}
Z_S  \triangleq  \sum_{i_1=1}^{l}\lp\sum_{i_2=1}^{l}e^{\beta D_{0,S}^{(i_1,i_2)}} \rp^{s}, \nonumber \\
\end{equation}
with
\begin{equation}\label{eq:thm6eq4}
 D_{0,S}^{(i_1,i_2)}  \triangleq  \sqrt{t}(\y^{(i_2)})^T
 G\x^{(i_1)}+\sqrt{1-t}\|\x^{(i_2)}\|_2 (\y^{(i_2)})^T\lp\sum_{k=1}^{r+1}b_k\u^{(2,k)}\rp  +\sqrt{1-t}\|\y^{(i_2)}\|_2\lp\sum_{k=1}^{r+1}c_k\h^{(k)}\rp^T\x^{(i)}.
 \end{equation}
Then
\begin{eqnarray}\label{eq:thm6eq5}
\lim_{n\rightarrow\infty} \psi_S(\calX,\calY,\bar{\p}(1),\bar{\q}(1),\bar{\m}(1),\beta,s,1) & = &
 -\lim_{n\rightarrow\infty}\frac{\mbox{sign}(s) s \beta}{2\sqrt{n}} \sum_{k=1}^{r+1}\Bigg(\Bigg. \bar{\p}_{k-1}(0)\bar{\q}_{k-1}(0)  -\bar{\p}_{k}(0)\bar{\q}_{k}(0)   \Bigg.\Bigg)
\bar{\m}_k(t)
 \nonumber \\
& &  +\lim_{n\rightarrow\infty} \psi_S(\calX,\calY,\bar{\p}(0),\bar{\q}(0),\bar{\m}(0),\beta,s,0). \nonumber \\
 \end{eqnarray}
\end{corollary}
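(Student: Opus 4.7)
The plan has three ingredients: (i) the constancy of $\psi_1$ along the stationary path, which is Theorem \ref{thm:thm5}; (ii) a simple evaluation of $\psi_1$ at the endpoints under the unit-norm hypothesis; and (iii) an explicit decoupling of $\psi$ from $\psi_S$ via Gaussian integration of the scalar variables $u^{(4,k)}$. The identity \eqref{eq:thm6eq0} is immediate from Theorem \ref{thm:thm5}; the unit-norm hypothesis does not enter the constancy argument. For \eqref{eq:thm6eq1}, I would just evaluate \eqref{eq:saip1} at $t=0$ and $t=1$: under $\|\x\|_2=\|\y\|_2=1$ (or its sfl-concentration analogue), every bracket of the form $\langle\|\x^{(i_1)}\|_2\|\x^{(p_1)}\|_2\|\y^{(i_2)}\|_2\|\y^{(p_2)}\|_2\rangle_{\gamma^{(r)}_\bullet}$ reduces to $1$ (each $\gamma$-measure is a probability measure), so the summand in \eqref{eq:saip1} collapses to $\bar{\m}_k(\bar{\p}_{k-1}\bar{\q}_{k-1}-\bar{\p}_k\bar{\q}_k)$, matching \eqref{eq:thm6eq1}.

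The heart of the argument is \eqref{eq:thm6eq5}, where I must relate $\psi$ to $\psi_S$ explicitly. Under unit norms, the term $\sqrt{t}\|\x^{(i_1)}\|_2\|\y^{(i_2)}\|_2\sum_{k=1}^{r+1}a_k u^{(4,k)} = \sqrt{t}\sum_k a_k u^{(4,k)}$ appearing in the exponent $D^{(i_1,i_2)}$ of $Z$ is independent of $(i_1,i_2)$; setting $X\triangleq \beta\sqrt{t}\sum_k a_k u^{(4,k)}$ gives $Z=e^{sX}Z_S$, so $Z^{\m_1}=e^{s\m_1 X}Z_S^{\m_1}$. I would then iteratively compute $\zeta_k=\mE_{{\mathcal U}_k}\zeta_{k-1}^{\m_k/\m_{k-1}}$: at level $k$, the factor $e^{s\m_k\beta\sqrt{t} a_k u^{(4,k)}}$ integrates (by $\mE[e^{cu}]=e^{c^2/2}$ for standard normal $u$) into $e^{\tfrac12 s^2\m_k^2\beta^2 t a_k^2}$, and subsequent raisings to $\m_{j+1}/\m_j$ promote the coefficient $\m_k^2$ to $\m_k\m_r$ at the outermost level. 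By induction this would give
\begin{equation*}
\log\zeta_r=\log\zeta_{S,r}+\tfrac12 s^2\beta^2 t\sum_{j=1}^{r}\m_j\m_r a_j^2+s\m_r\beta\sqrt{t}\,a_{r+1}u^{(4,r+1)}.
\end{equation*}
Dividing by $\beta|s|\sqrt{n}\,\m_r$ and taking $\mE_{G,{\mathcal U}_{r+1}}$ (the linear $u^{(4,r+1)}$-term averages to zero, and $\zeta_{S,r}$ carries all the $G$-dependence), then using $a_j^2=\p_{j-1}\q_{j-1}-\p_j\q_j$ together with $\m_{r+1}=0$, produces the clean identity
\begin{equation*}
\psi=\psi_S+\tfrac{\mathrm{sign}(s)\,s\beta\,t}{2\sqrt{n}}\sum_{k=1}^{r+1}\m_k(\p_{k-1}\q_{k-1}-\p_k\q_k).
\end{equation*}

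Substituting this into the unit-norm form of \eqref{eq:saip1} yields
$\psi_1=\psi_S-\tfrac{\mathrm{sign}(s)\,s\beta\,(1-t)}{2\sqrt{n}}\sum_k\bar{\m}_k(t)(\bar{\p}_{k-1}(t)\bar{\q}_{k-1}(t)-\bar{\p}_k(t)\bar{\q}_k(t))$. At $t=1$ the $(1-t)$ prefactor kills the correction so $\psi_1|_{t=1}=\psi_S|_{t=1}$, while at $t=0$ the full correction survives; invoking \eqref{eq:thm6eq0} to equate $\psi_1$ at the two endpoints along the stationary path then gives \eqref{eq:thm6eq5} directly. The main technical obstacle in this plan is the careful bookkeeping in Step (iii) of how the Parisi exponents $\m_k$ compose through the nested $\m_{j+1}/\m_j$-power maps — once that pattern is verified inductively, the remainder is algebraic substitution.
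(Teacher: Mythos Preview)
Your proposal is correct and follows essentially the same route as the paper: invoke Theorem~\ref{thm:thm5} for \eqref{eq:thm6eq0}, collapse the norm brackets in \eqref{eq:saip1} under the unit-norm hypothesis to obtain \eqref{eq:thm6eq1}, and then integrate out the scalar Gaussians $u^{(4,k)}$ to relate $\psi$ and $\psi_S$. The paper's proof is terser---it simply asserts the endpoint identities \eqref{eq:thm6eq6}--\eqref{eq:thm6eq7} ``after integrating out $u^{(4,k)}$'' (written there as $u^{(2,k)}$, a typo)---whereas you carry out the nested Gaussian integration explicitly and in fact obtain the cleaner all-$t$ identity $\psi=\psi_S+\tfrac{\mathrm{sign}(s)\,s\beta\,t}{2\sqrt{n}}\sum_k\m_k(\p_{k-1}\q_{k-1}-\p_k\q_k)$, from which both endpoint relations drop out at once.
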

\begin{proof}
After integrating out $u^{(2,k)}$ in $\psi(\cdot)$, one observes
\begin{eqnarray}\label{eq:thm6eq6}
 \psi_S(\calX,\calY,\bar{\p}(1),\bar{\q}(1),\bar{\m}(1),\beta,s,1) & = & -\frac{\mbox{sign}(s) s \beta}{2\sqrt{n}} \sum_{k=1}^{r+1}\Bigg(\Bigg. \bar{\p}_{k-1}(1)\bar{\q}_{k-1}(1)  -\bar{\p}_{k}(1)\bar{\q}_{k}(1)   \Bigg.\Bigg)
\bar{\m}_k(1)
\nonumber \\
& &  +\psi(\calX,\calY,\bar{\p}(1),\bar{\q}(1),\bar{\m}(1),\beta,s,1). \nonumber \\
& = &  \psi_1(\calX,\calY,\bar{\p}(1),\bar{\q}(1),\bar{\m}(1),\beta,s,1).
 \end{eqnarray}
Also,
 \begin{eqnarray}\label{eq:thm6eq7}
  \psi_1(\calX,\calY,\bar{\p}(0),\bar{\q}(0),\bar{\m}(0),\beta,s,0) & = & -\frac{\mbox{sign}(s) s \beta}{2\sqrt{n}} \sum_{k=1}^{r+1}\Bigg(\Bigg. \bar{\p}_{k-1}(0)\bar{\q}_{k-1}(0)  -\bar{\p}_{k}(0)\bar{\q}_{k}(0)   \Bigg.\Bigg)
\bar{\m}_k(0) \nonumber \\
& &  + \psi(\calX,\calY,\bar{\p}(0),\bar{\q}(0),\bar{\m}(0),\beta,s,0) \nonumber \\
 & = &  -\frac{\mbox{sign}(s) s \beta}{2\sqrt{n}} \sum_{k=1}^{r+1}\Bigg(\Bigg. \bar{\p}_{k-1}(0)\bar{\q}_{k-1}(0)  -\bar{\p}_{k}(0)\bar{\q}_{k}(0)   \Bigg.\Bigg)
\bar{\m}_k(0) \nonumber \\
& &  +\psi_S(\calX,\calY,\bar{\p}(0),\bar{\q}(0),\bar{\m}(0),\beta,s,0). \nonumber \\
 \end{eqnarray}
Since   $\lim_{n\rightarrow\infty}\psi_1(\calX,\calY,\bar{\p}(0),\bar{\q}(0),\bar{\m}(0),\beta,s,0)
=\lim_{n\rightarrow\infty}\psi_1(\calX,\calY,\bar{\p}(1),\bar{\q}(1),\bar{\m}(1),\beta,s,1)$, is implied by (\ref{eq:thm5eq1}) and (\ref{eq:thm6eq0}), one immediately has
\begin{eqnarray}\label{eq:thm6eq8}
\lim_{n\rightarrow\infty} \psi_S(\calX,\calY,\bar{\p}(1),\bar{\q}(1),\bar{\m}(1),\beta,s,1) & = & -\lim_{n\rightarrow\infty}\frac{\mbox{sign}(s) s \beta}{2\sqrt{n}} \sum_{k=1}^{r+1}\Bigg(\Bigg. \bar{\p}_{k-1}(0)\bar{\q}_{k-1}(0)  -\bar{\p}_{k}(0)\bar{\q}_{k}(0)   \Bigg.\Bigg)
\bar{\m}_k(0) \nonumber \\
& &   +\lim_{n\rightarrow\infty}\psi_S(\calX,\calY,\bar{\p}(0),\bar{\q}(0),\bar{\m}(0),\beta,s,0), \nonumber \\
 \end{eqnarray}
which is exactly the same as (\ref{eq:thm6eq5}).
\end{proof}

The following corollary is then a trivial consequence of Theorem \ref{thm:thm5} and Corollary \ref{thm:thm6}.
\begin{corollary}
\label{thm:thm7}
Consider the following modulo $\m$  stationirized fully lifted random duality theory frame (\textbf{\emph{modulo-$\m$ sfl RDT frame}}). Assume the existence of $\m$ such that the solutions $\bar{\p}(t)$ and $\bar{\q}(t)$ (obtained for such an $\m$) to the first two sequences of equations of system (\ref{eq:saip4})  are not only the
 (appropriately scaled/normalized) expected values (in the sense of (\ref{eq:saip4})) of the overlaps $(\x^{(p_1)})^T\x^{(i_1)}$ and $(\y^{(p_2)})^T\y^{(i_2)}$, but also their concentrating points (or such that $\phi_{k_1+1}^{(r)}=0$). Moreover, let ${\mathcal X}$ and ${\mathcal Y}$ be as in Corollary \ref{thm:thm6}. For $\bar{\p}_0(t)=1$, $\bar{\q}_0(t)=1$, and $\m_1\rightarrow\m_0=1$ one then has that
$\frac{d\psi_1(\calX,\calY,\bar{\p}(t),\bar{\q}(t),\m,\beta,s,t)}{dt}   =   0$,
 \begin{eqnarray}\label{eq:thm7eq0}
\lim_{n\rightarrow\infty}\psi_1(\calX,\calY,\bar{\p}(t),\bar{\q}(t),\bar{\m}(t),\beta,s,t)
& = &
\lim_{n\rightarrow\infty}\psi_1(\calX,\calY,\bar{\p}(0),\bar{\q}(0),\bar{\m}(0),\beta,s,0) \nonumber \\
& = & \lim_{n\rightarrow\infty} \psi_1(\calX,\calY,\bar{\p}(1),\bar{\q}(1),\bar{\m}(1),\beta,s,1),
\end{eqnarray}
and
\begin{eqnarray}\label{eq:thm7eq1}
\lim_{n\rightarrow\infty} \psi_S(\calX,\calY,\bar{\p}(1),\bar{\q}(1),\m,\beta,s,1) & \geq  &  \lim_{n\rightarrow\infty}  \inf_{\m} \Bigg(\Bigg.
-\frac{\mbox{sign}(s) s \beta}{2\sqrt{n}} \sum_{k=1}^{r+1}\Bigg(\Bigg. \bar{\p}_{k-1}(0)\bar{\q}_{k-1}(0)  -\bar{\p}_{k}(0)\bar{\q}_{k}(0)   \Bigg.\Bigg)
\m_{k}  \nonumber \\
& &   +   \psi_S(\calX,\calY,\bar{\p}(0),\bar{\q}(0),\m,\beta,s,0)\Bigg.\Bigg). \nonumber \\
 \end{eqnarray}
\end{corollary}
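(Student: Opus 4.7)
The approach is to observe that Corollary \ref{thm:thm7} is a minor reweighting of Theorem \ref{thm:thm5} and Corollary \ref{thm:thm6} in which the $\m$-stationarity is dropped in favor of an infimum over $\m$. The key technical point is that, inside the proof of Theorem \ref{thm:thm5}, the derivation of $\phi_{k_1+1}^{(r)} \to 0$ in (\ref{eq:saip4a})--(\ref{eq:saip6a}) only invokes the first two lines of the system (\ref{eq:saip4}) -- the $\p$- and $\q$-stationarity conditions -- and never uses the $\m$-stationarity. Hence, under the weaker hypothesis that $\bar{\p}(t)$ and $\bar{\q}(t)$ alone satisfy these first two lines (and provide the required scaled-overlap concentrations for each fixed admissible $\m$), the Cauchy--Schwartz chain (\ref{eq:saip6a00})--(\ref{eq:saip6a02}) still yields $\phi_{k_1+1}^{(r)} \to 0$.

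Next I would form the chain-rule expansion analogous to (\ref{eq:saip7}) for the composite $t \mapsto \psi_1(\calX,\calY,\bar{\p}(t),\bar{\q}(t),\m,\beta,s,t)$, but with $\m$ held constant, so $\frac{d\m}{dt}=0$ and the $\partial_\m$ contribution disappears automatically. The $\partial_{\bar{\p}}$ and $\partial_{\bar{\q}}$ contributions vanish by the assumed stationarity. The residual $\frac{\partial \psi_1}{\partial t}$ coincides with $\frac{\partial \psi}{\partial t}$ because the correction in (\ref{eq:saip1}) carries no explicit $t$-dependence, and (\ref{eq:rthlev2genanal43}) together with $\phi_{k_1+1}^{(r)} \to 0$ forces this to vanish asymptotically. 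This delivers $\frac{d\psi_1}{dt}\to 0$ and hence the three-way equality (\ref{eq:thm7eq0}).

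For (\ref{eq:thm7eq1}), I would then replicate the $u^{(4,k)}$-integration step from the proof of Corollary \ref{thm:thm6}, which only uses $\|\x\|_2=\|\y\|_2=1$ (or their concentration to $1$) and is insensitive to whether $\m$ is stationary. At $t=1$ this integration gives $\psi_1(\bar{\p}(1),\bar{\q}(1),\m,\beta,s,1)=\psi_S(\bar{\p}(1),\bar{\q}(1),\m,\beta,s,1)$, while at $t=0$ the functions $\psi_1$ and $\psi_S$ differ precisely by $-\frac{\mbox{sign}(s)\, s\, \beta}{2\sqrt{n}}\sum_{k}(\bar{\p}_{k-1}(0)\bar{\q}_{k-1}(0)-\bar{\p}_{k}(0)\bar{\q}_{k}(0))\,\m_k$. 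Combining these two identifications with (\ref{eq:thm7eq0}) produces an equality valid for the particular $\m$ under consideration; taking the infimum over all admissible $\m$ on the right-hand side then immediately yields the stated lower bound.

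The main obstacle I anticipate is verifying that the scaled-overlaps concentration hypothesis truly survives the transition from the fully stationarized setting of Theorem \ref{thm:thm5} to the modulo-$\m$ setting here; once that is accepted as part of the assumption on $\bar{\p}(t),\bar{\q}(t)$, the rest is pure bookkeeping about which partial derivatives one is allowed to set to zero and a routine application of the infimum step.
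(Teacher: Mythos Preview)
Your proposal is correct and is exactly the kind of ``trivial adjustment'' the paper has in mind; indeed the paper's own proof consists of a single sentence referring back to Theorem~\ref{thm:thm5} and Corollary~\ref{thm:thm6}. Your observation that the chain (\ref{eq:saip4a})--(\ref{eq:saip6a}) never invokes the $\m$-stationarity, together with the fact that for $\|\x\|_2=\|\y\|_2=1$ the correction in (\ref{eq:saip1}) reduces to a $t$-independent constant (so that $\partial_t\psi_1=\partial_t\psi$), is precisely the mechanism behind the paper's one-line proof; the final infimum step is then immediate.
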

\begin{proof}
   Follows by trivially adjusting the proofs of Theorem \ref{thm:thm5} and Corollaries \ref{thm:thm6} and \ref{thm:thm7}.
\end{proof}


Analogous  modulo-$(\m,\p)$ or modulo-$(\m,\q)$ concepts can be  considered as well, but we skip such trivialities. Instead, we point out a couple of useful things that should be kept in mind: 1) All that we presented above was in a discrete (countable) domain. In other words, we worked with countable sequences of (potentially infinite) length $r$. Everything holds if, instead, rephrased in the corresponding continuous domain as well. In fact, one can then view $\m(t)$ as a function of $\p(t)$ and $\q(t)$, $\m(\p(t),\q(t))$, and work with continuity of $\m(t)$ to ensure the above needed existences; 2) To make writing and notation easier to handle, we assumed $\p_{r+1}(t)=\q_{r+1}(t)=0$. Such an assumption is not required though. Instead, with a little bit of additional work and finer adjustment, one can rewrite pretty much everything with, say, $\p_{r+1}(t)=\q_{r+1}(t)=-1$. This is particularly relevant as it allows to guarantee a switch of sign throughout the range of admissible $\p(t)$'s and $\q(t)$'s in the concentrating expression underlying (\ref{eq:saip4}) and ultimately $\phi_{k_1+1}^{(r)}$ in (\ref{eq:saip6}) and (\ref{eq:saip6a}). Together with the continuity of $\p(t)$, $\q(t)$, and $\m(\p(t),\q(t))$ and the fact that the scaled $\x$'s and $\y$'s overlaps belong to the interval $[-1,1]$, this establishes a useful combination to help ensure that the equalities in the given system of equations (\ref{eq:saip4}) can indeed be achieved; 3) In particular, assuming that one can determine \emph{unique} fully continuous (as extensions of the corresponding infinitely ($r\rightarrow\infty$) long discrete sequences) $\p(0)$, $\q(0)$, and $\m(\p(0),\q(0))$ such that (\ref{eq:saip4})  and (\ref{eq:saip6a}) are satisfied for $t=0$ (i.e., for the the fully decoupled model), the continuities of $\p(t)$ and $\q(t)$ over $t$ and the continuities of $\m(\p(t),\q(t))$ over both $t$ and $\p(t)$ and $\q(t)$ are rather valuable. On the other hand, the nonexistence of fully continuous such $\p(0)$, $\q(0)$, and $\m(\p(0),\q(0))$ together with the continuity of $\p(t)$, $\q(t)$, and $\m(t)$ over $t$ is valuable as well; 4) The same properties related to the continuity of ($\p(t)$ and $\q(t)$) over $t$ for modulo-$\m$ frame (where $\p(0)$, $\q(0)$, and $\m(\p(0),\q(0))$ are such that the first two sequences/functions of equations of (\ref{eq:saip4})  and (\ref{eq:saip6a}) are satisfied for $t=0$) are valuable as well; 5) When $t=0$ full decoupling happens on two levels: (i) the components of both $\x$ and $\y$ are completely decoupled; and (ii) the $\gamma$ measures decouple over $\x$ and $\y$. The $\gamma$ decoupling is already sufficient to ensure that, for $\bar{\p}(0)$ and $\bar{\q}(0)$ that satisfy (\ref{eq:saip4}), one also has that $\phi_{k_1+1}^{(r)}=0$ for $t=0$ as well. On the other hand, assuming $\lim_{n\rightarrow\infty}\frac{r}{n}=0$, the decoupling over the components of $\x$ and $\y$ (together with the $\gamma$ decoupling) trivially enures that the scaled overlaps concentrate for $t=0$ as well.

\subsection{Convexity}
\label{sec:convexity}

One should note that the expression inside $\phi_{k_1+1}^{(r)}$ does concentrate for any  pairs $(i_2,p_2)$ or $(i_1,p_1)$. When one runs over all such pairs (say $(i_2,p_2)$), things are a little bit different. However, for negative $s$ and convex ${\mathcal X}$ and ${\mathcal Y}$, one has that $(C^{(i_1)})^s$ is log-concave. Namely, $\log\lp (C^{(i_1)})^s\rp=s\log\lp C^{(i_1)}\rp=-|s|\log\lp C^{(i_1)}\rp$ which is concave since the $\log (\cdot)$ of a sum of exponentials with convex (linear) exponents is convex. The Brascamp-Lieb inequality (see, e.g., \cite{BraLieb76} and \cite{Prek71,Prek73,Lein71,HadOhm56,Lus35} as well) then ensures the concentration under product measure $\frac{(C^{(i_1)})^s}{Z}$ and the whole above machinery works in full power. This, of course, is also a well known consequence of the basics of random duality theory, where the above theorems and corollaries hold (with components of all $\p$, $\q$, $\m$ in $\beta\rightarrow\infty $ regime  going to either zero or one) for problems where the strong deterministic duality holds (and therefore the convex ones as well) (for more details see, e.g., \cite{StojnicRegRndDlt10,StojnicUpper10,StojnicGorEx10,StojnicGenLasso10}).

\section{A few bilinear model well known examples}
\label{sec:examples}

We here briefly recall on several well known practical examples that can fit into the model considered above (see also, e.g., \cite{Stojnicnflgscompyx23}).
As is clear from the above presentation, the key object of interest is the following function from Theorem \ref{thm:thm3}
\begin{equation}\label{eq:exampleseq1}
\psi(\calX,\calY,\p,\q,\m,\beta,s,t)  =  \mE_{G,{\mathcal U}_{r+1}} \frac{1}{\beta|s|\sqrt{n}\m_r} \log
\lp \mE_{{\mathcal U}_{r}} \lp \dots \lp \mE_{{\mathcal U}_2}\lp\lp\mE_{{\mathcal U}_1} \lp Z^{\m_1}\rp\rp^{\frac{\m_2}{\m_1}}\rp\rp^{\frac{\m_3}{\m_2}} \dots \rp^{\frac{\m_{r}}{\m_{r-1}}}\rp,
\end{equation}
where
\begin{eqnarray}\label{eq:exampleseq2}
Z & \triangleq & \sum_{i_1=1}^{l}\lp\sum_{i_2=1}^{l}e^{\beta D_0^{(i_1,i_2)}} \rp^{s} \nonumber \\
 D_0^{(i_1,i_2)} & \triangleq & \sqrt{t}(\y^{(i_2)})^T
 G\x^{(i_1)}+\sqrt{1-t}\|\x^{(i_2)}\|_2 (\y^{(i_2)})^T\lp\sum_{k=1}^{r+1}\u^{(2,k)}\rp\nonumber \\
 & & +\sqrt{t}\|\x^{(i_1)}\|_2\|\y^{(i_2)}\|_2\lp\sum_{k=1}^{r+1}u^{(4,k)}\rp +\sqrt{1-t}\|\y^{(i_2)}\|_2\lp\sum_{k=1}^{r+1}\h^{(k)}\rp^T\x^{(i)},
 \end{eqnarray}
and
${\mathcal X}=\{\x^{(1)},\x^{(2)},\dots,\x^{(l)}\}$ with $\x^{(i)}\in \mR^{n},1\leq i\leq l$ and ${\mathcal Y}=\{\y^{(1)},\y^{(2)},\dots,\y^{(l)}\}$ with $\y^{(i)}\in \mR^{m},1\leq i\leq l$. As \cite{Stojnicnflgscompyx23} suggested, for the concreteness, we assume the so-called linear (proportional growth) regime, $\frac{m}{n}=\alpha$, with $\alpha$ remaining a constant as $n\rightarrow\infty$.

\subsection{Hopfield models}
\label{sec:hop}

Choosing $s=1$, ${\mathcal X}=\{-\frac{1}{\sqrt{n}},\frac{1}{\sqrt{n}}\}^n$, and ${\mathcal Y}=\mS^m$ (where $\mS^m$ is $m$-dimensional unit sphere), we have that
\begin{equation}\label{eq:exampleseq3}
  \lim_{n,\beta\rightarrow\infty}\psi\lp\left \{-\frac{1}{\sqrt{n}},\frac{1}{\sqrt{n}}\right \}^n,\mS^m,\p,\q,\m,\beta,1,1\rp=  \lim_{n\rightarrow\infty} \frac{\max_{\x\in\{-\frac{1}{\sqrt{n}},\frac{1}{\sqrt{n}}\}^n}\|G\x\|_2}{\sqrt{n}}
\end{equation}
 and  $\lim_{n,\beta\rightarrow\infty}\psi(\{-\frac{1}{\sqrt{n}},\frac{1}{\sqrt{n}}\}^n,\mS^m,\p,\q,\m,\beta,1,0)$ are precisely the ground state energy of the  so-called positive square root Hopfield model and its decoupled counterpart in the thermodynamic limit. Various variants of the Hopfield models have been studied over last more than a half of a century. While we defer any further discussion regarding the specialization of the above machinery to the Hopfield models to separate papers, we mention here that their importance is rather extraordinary on a multitude of levels.  Studying both highly theoretical and very practical aspects of these models has been on the forefront of the high level research over the last several decades in the most diverse of the scientific disciplines, including theoretical mathematics and  probability, statistical physics, neural/biological networks, numerical optimization and many others  (more on the historical overview and current state of the art results can be found in, e.g.,  \cite{Hop82,PasFig78,Hebb49,PasShchTir94,ShchTir93,BarGenGueTan10,BarGenGueTan12,Tal98,StojnicMoreSophHopBnds10}).

For the completeness, we also mention that for $s=-1$ one analogously has that
\begin{equation}\label{eq:exampleseq4}
  \lim_{n,\beta\rightarrow\infty}\psi\lp\left \{-\frac{1}{\sqrt{n}},\frac{1}{\sqrt{n}}\right \}^n,\mS^m,\p,\q,\m,\beta,-1,1\rp=   - \lim_{n\rightarrow\infty} \frac{\min_{\x\in\{-\frac{1}{\sqrt{n}},\frac{1}{\sqrt{n}}\}^n}\|G\x\|_2}{\sqrt{n}}
\end{equation}
 and  $\lim_{n,\beta\rightarrow\infty}\psi(\{-\frac{1}{\sqrt{n}},\frac{1}{\sqrt{n}}\}^n,\mS^m,\p,\q,\m,\beta,-1,0)$ are the thermodynamic limit values of the negative square root Hopfield model  ground state energy and its decoupled counterpart.

\subsection{Asymmetric Little models}
\label{sec:litt}

Choosing $s=1$, ${\mathcal X}=\{-\frac{1}{\sqrt{n}},\frac{1}{\sqrt{n}}\}^n$, and ${\mathcal X}={\mathcal Y}=\{-\frac{1}{\sqrt{m}},\frac{1}{\sqrt{m}}\}^m$, we have that
\begin{equation}\label{eq:exampleseq5}
  \lim_{n,\beta\rightarrow\infty}\psi\lp\left \{-\frac{1}{\sqrt{n}},\frac{1}{\sqrt{n}}\right \}^n,\left \{-\frac{1}{\sqrt{m}},\frac{1}{\sqrt{m}}\right \}^n,\p,\q,\m,\beta,1,1\rp=  \lim_{n\rightarrow\infty} \frac{\max_{\x\in\{-\frac{1}{\sqrt{n}},\frac{1}{\sqrt{n}}\}^n}\|G\x\|_1}{\sqrt{nm}}
\end{equation}
 and  $\lim_{n,\beta\rightarrow\infty}\psi(\{-\frac{1}{\sqrt{n}},\frac{1}{\sqrt{n}}\}^n,\{-\frac{1}{\sqrt{m}},\frac{1}{\sqrt{m}} \}^n,\p,\q,\m,\beta,1,0)$ are precisely the thermodynamic limit values of the positive asymmetric Little model ground state energy and its decoupled counterpart. More on the foundations, relevance, and key state of the art results related to these models can be found in, e.g.,
\cite{BruParRit92,Little74,BarGenGue11bip,CabMarPaoPar88,AmiGutSom85,StojnicAsymmLittBnds11}.

As above, we, for the completeness, for $s=-1$, analogously have that
\begin{equation}\label{eq:exampleseq6}
  \lim_{n,\beta\rightarrow\infty}\psi\lp\left \{-\frac{1}{\sqrt{n}},\frac{1}{\sqrt{n}}\right \}^n,\left \{-\frac{1}{\sqrt{m}},\frac{1}{\sqrt{m}}\right \}^n,\p,\q,\m,\beta,-1,1\rp= - \lim_{n\rightarrow\infty} \frac{\min_{\x\in\{-\frac{1}{\sqrt{n}},\frac{1}{\sqrt{n}}\}^n}\|G\x\|_1}{\sqrt{nm}}
\end{equation}
 and  $\lim_{n,\beta\rightarrow\infty}\psi(\{-\frac{1}{\sqrt{n}},\frac{1}{\sqrt{n}}\}^n,\{-\frac{1}{\sqrt{m}},\frac{1}{\sqrt{m}} \}^n,\p,\q,\m,\beta,-1,0)$ are the thermodynamic limit negative asymmetric Little model ground state energy and its decoupled counterpart.

\subsection{Perceptrons}
\label{sec:perc}

If, in (\ref{eq:exampleseq1}), one chooses $s=-1$ and ${\mathcal Y}=\mS_+^n$ and ${\mathcal X}$ as any subset of $\mS^n$ (where $\mS_+^m$ is the positive orthant portion of the $m$-dimensional unit sphere and $\mS^n$ is the $n$-dimensional unit sphere), then $\psi(\cdot)$  is (a properly adjusted) free energy associated with the classical, so-called, positive perceptron. Many particular versions of such perceptrons are of interest and have been studied throughout the literature over the last several decades. For the concreteness, we below select two, possibly, the most representative ones.

\subsubsection{Spherical perceptron}
\label{sec:sphperc}

If, in addition to choosing  $s=-1$ and ${\mathcal Y}=\mS_+^n$, one also chooses  ${\mathcal X}=\mS^n$  in (\ref{eq:exampleseq1}), then $\psi(\cdot)$  is (a properly adjusted) free energy associated with the classical, so-called, positive \emph{spherical} perceptron (see, e.g., \cite{StojnicGardGen13,StojnicGardSphErr13,StojnicGardSphNeg13,GarDer88,Gar88,SchTir02,SchTir03}). In random optimizations problems (see, e.g., \cite{FPSUZ17,FraHwaUrb19,FraPar16,FraSclUrb19,FraSclUrb20,AlaSel20,StojnicGardGen13,StojnicGardSphErr13,StojnicGardSphNeg13,GarDer88,Gar88,Schlafli,Cover65,Winder,Winder61,Wendel62,Cameron60,Joseph60,BalVen87,Ven86,SchTir02,SchTir03}), one is often interested in its thermodynamic limit ground state value
\begin{equation}\label{eq:exampleseq7}
  \lim_{n,\beta\rightarrow\infty}\psi\lp \psi(\mS^m,\mS_+^m,\p,\q,\m,\beta,1,1\rp=  \lim_{n\rightarrow\infty} \frac{\min_{\x\in\mS^m}\max_{\y\in\mS_+^m} \y^TG\x}{\sqrt{n}}.
\end{equation}
Clearly, the interpolating alternative, $\lim_{n,\beta\rightarrow\infty}\psi\lp \psi(\mS^m,\mS_+^m,\p,\q,\m,\beta,1,0\rp$, is then precisely its decoupled counterpart.

\subsubsection{Binary perceptron}
\label{sec:binperc}

If, in addition to choosing  $s=-1$ and ${\mathcal Y}=\mS_+^n$, one also chooses  ${\mathcal X}=\{-\frac{1}{\sqrt{n}},\frac{1}{\sqrt{n}}\}^n$  in (\ref{eq:exampleseq1}), then $\psi(\cdot)$  is (a properly adjusted) free energy associated with the classical, so-called, positive \emph{binary} perceptron (see, e.g., \cite{StojnicGardGen13,GarDer88,Gar88,StojnicDiscPercp13,KraMez89,GutSte90,KimRoc98}). In random optimizations (see, e.g., \cite{StojnicGardGen13,GarDer88,Gar88,StojnicDiscPercp13,KraMez89,GutSte90,KimRoc98})), one is, again, often interested in
\begin{equation}\label{eq:exampleseq8}
  \lim_{n,\beta\rightarrow\infty}\psi\lp \psi(\left \{-\frac{1}{\sqrt{n}},\frac{1}{\sqrt{n}}\right \}^n,\mS_+^m,\p,\q,\m,\beta,1,1\rp=  \lim_{n\rightarrow\infty} \frac{\min_{\x\in\left \{-\frac{1}{\sqrt{m}},\frac{1}{\sqrt{m}}\right \}^n}\max_{\y\in\mS_+^m} \y^TG\x}{\sqrt{n}}
\end{equation}
 and  $\lim_{n,\beta\rightarrow\infty}\psi\lp \psi( \{-\frac{1}{\sqrt{m}},\frac{1}{\sqrt{m}}\}^n,\mS_+^m,\p,\q,\m,\beta,1,0\rp$, which are precisely the positive binary  perceptron associated ground state energy and its decoupled counterpart in the thermodynamic limit.

The above examples are only a few illustrative ones from a rather unlimited collection (the cited references contain a large number of closely related relevant ones as well). Although small, this set provides a pretty good hint as to how wide could be the range of potential applications of our results. Further studying of the above presented interpolating concepts within the context of each of these applications is therefore of great interest. Such a studying is usually problem specific and we will present many interesting results that can be obtained in these directions in separate papers.

\section{Conclusion}
\label{sec:lev2x3lev2liftconc}

In our companion paper \cite{Stojnicnflgscompyx23}, we introduced a very powerful statistical interpolating/comparison mechanism called \emph{fully lifted} (fl). That concept is a very strong upgrade of the fully bilinear corresponding one presented in \cite{Stojnicgscompyx16}. Here, we present a particular realization of the fl mechanism that relies on a stationarization  along the interpolating path concept. Throughout the process, we uncover a collection of very fundamental relations among  the interpolating parameters. We also show how the whole machinery in particular special cases simplifies to forms readily usable in practice.

As the presented results are extremely powerful  and very generic they can be applied to a plethora of well known scenarios appearing in various random structures and optimization problems. Such applications follow the generic principles introduced here but also require  problem specific adjustments that we will present in separate papers.

\begin{singlespace}
\bibliographystyle{plain}
\bibliography{nflgscompyxRefs}
\end{singlespace}

\end{document}